\documentclass[11pt,a4paper]{amsart}
\usepackage{amsfonts}
\usepackage{}
\usepackage{amsfonts}
\usepackage{amsmath,xypic}
\usepackage{mathrsfs}
\usepackage{amssymb}

\usepackage{CJK,CJKnumb}
\usepackage[CJKbookmarks,colorlinks,
            linkcolor=black,
            anchorcolor=black,
            citecolor=blue]{hyperref}
\usepackage{color}              
\usepackage{indentfirst}        
\usepackage{latexsym,bm}        
\usepackage{amsmath,amssymb}    
\usepackage{pstricks}
\usepackage{pst-node}
\usepackage{pst-tree}
\usepackage{pst-plot}
\usepackage{pst-text}
\usepackage{graphicx}
\usepackage{cases}
\usepackage{pifont}
\usepackage{txfonts}
\usepackage[all,knot,poly]{xy}


\setlength{\textwidth}{14cm} \setlength{\textheight}{20cm}
\setlength{\hoffset}{0cm} \setlength{\voffset}{0cm}
\setlength{\parindent}{2em}                 
\setlength{\parskip}{3pt plus1pt minus1pt}  

\setlength{\abovedisplayskip}{2pt plus1pt minus1pt}     
\setlength{\belowdisplayskip}{6pt plus1pt minus1pt}     
\setlength{\arraycolsep}{2pt}   

\allowdisplaybreaks[4]  

\CJKtilde   

\newcommand{\msu}{\mathscr{U}}
\newcommand{\al}{\alpha}
\newcommand{\be}{\beta}
\newcommand{\de}{\delta}
\newcommand{\De}{\Delta}

\newcommand{\ga}{\gamma}

\newcommand{\bC}{\mathbb{C}}
\newcommand{\bN}{\mathbb{N}}
\newcommand{\bQ}{\mathbb{Q}}
\newcommand{\bK}{\mathbb{K}}
\newcommand{\bZ}{\mathbb{Z}}

\newcommand{\ot}{\otimes}

\newcommand{\op}{\oplus}
\newcommand{\si}{\sigma}

\newcommand{\ve}{\varepsilon}

\newcommand{\ma}{\mathscr{A}}
\newcommand{\mh}{\mathscr{H}}
\newcommand{\mb}[1]{\mbox{#1}}

\newcommand{\m}[1]{\mathcal{#1}}
\newcommand{\mi}{\mbox{id}}
\newcommand{\bs}[1]{{\scriptsize\mbox{#1}}}

\newcommand{\til}{\tilde{N}}
\newcommand{\stt}[1]{{\scriptstyle #1}}

\newcommand{\ol}[1]{\overline{#1}}
\newcommand{\ull}[1]{\underline{#1}}

\newcommand{\lan}{\langle}
\newcommand{\ran}{\rangle}
\newcommand{\lb}{\left(}
\newcommand{\rb}{\right)}
\newcommand{\lc}{\lceil}
\newcommand{\rc}{\rceil}
\newcommand{\lf}{\lfloor}
\newcommand{\rf}{\rfloor}
\newcommand{\rw}{\rightarrow}

\begin{document}

\newtheorem{theorem}{Theorem}[section]

\newtheorem{lem}[theorem]{Lemma}

\newtheorem{cor}[theorem]{Corollary}
\newtheorem{prop}[theorem]{Proposition}

\theoremstyle{remark}
\newtheorem{rem}[theorem]{Remark}

\newtheorem{defn}[theorem]{Definition}

\newtheorem{exam}[theorem]{Example}

\title[The Green rings of $\bar{\ma}_{-1}(2)$ and its two relatives twisted]
{The Green rings of the $2$-rank Taft algebra\\ and its two relatives twisted}
\author[Li]{Yunnan Li}
\address{Department of Mathematics, Shanghai Key Laboratory of Pure Mathematics and Mathematical Practice, East China Normal University,
Minhang Campus, Dong Chuan Road 500, Shanghai 200241, PR China}
\email{yunnan814@163.com}

\author[Hu]{Naihong Hu$^\star$}
\address{Department of Mathematics, Shanghai Key Laboratory of Pure Mathematics and Mathematical Practice, East China Normal University,
Minhang Campus, Dong Chuan Road 500, Shanghai 200241, PR China}
\email{nhhu@math.ecnu.edu.cn}
\thanks{$^\star$N. H.,
the corresponding author, supported in part by the NNSFC (Grant No.
 11271131)}

\date\today
\subjclass{Primary 16G10, 16G60, 16T99, 19A22; Secondary 16N20, 16S35, 20G42, 17B37, 81R50}

\begin{abstract}
In the paper, the representation rings (or the Green rings) for a family of Hopf algebras of tame type, the $2$-rank Taft algebra (at $q=-1$) and its two relatives twisted by $2$-cocycles are
explicitly described via a representation theoretic analysis. It turns out that the Green rings can serve to detect effectively the
twist-equivalent Hopf algebras here.
\end{abstract}
\maketitle

\section{Introduction}
\subsection{}
It is well-known that Drinfeld twist is a key method to yielding new Hopf algebras in quantum groups theory (see \cite{D, GM}, etc.). Dually, $2$-cocycle twist or Doi-Majid twist including Drinfeld double as a kind of such twist (see \cite{dt, M}) is extensively employed in various current researches.
For instance, Andruskiewitsch et al (\cite{AFGV}) considered the twists of Nichols algebras associated to racks and cocycles. Guillot-Kassel-Masuoka (\cite{GKM}) got some examples by twisting comodule algebras by $2$-cocycles.
In generic case, Pei-Hu-Rosso (\cite{PHR}), Hu-Pei (\cite{hp}) found explicit $2$-cocycle deformation formulae between multi-(resp. two-)parameter quantum groups and one-parameter quantum groups $U_{q,q^{-1}}(\mathfrak g)$ and an equivalence between the weight module categories $\mathcal O$ as braided tensor ones. Likewise, in root of unity case, when a $2$-cocycle twist exists under some conditions on the parameters, Benkart et al (\cite{bpw}) used a result of Majid-Oeckl (\cite{mo}) to give a category equivalence between Yetter-Drinfeld modules for a finite-dimensional pointed Hopf algebra $H$ and
those for its cocycle twist $H^\sigma$, and further to derive an equivalence of the categories of modules for $\mathfrak u_{r,s}(\mathfrak {sl}_n)$ and $\mathfrak u_{q,q^{-1}}(\mathfrak {sl}_n)$ as Drinfeld doubles. In contrast, for particular choices of the parameters, there is no such cocycle twist, and in that
situation the representation theories of $\mathfrak u_{r,s}(\mathfrak {sl}_n)$ and $\mathfrak u_{q,q^{-1}}(\mathfrak {sl}_n)$ can be quite different (see Example 5.6 of \cite{bpw}).
Recently, Bazlov-Berenstein considered cocycle twists and extensions of braided doubles in a broader setting including twisting the rational Cherednik algebra of the symmetric group into the Spin Cherednik algebra (see \cite{BB}).

A natural question is to ask how to detect two twist-equivalent Hopf algebras in nature? The article seeks to address this question through investigating the representation rings for a family of Hopf algebras, the $2$-rank Taft Hopf algebra (at $q=-1$) of which we introduced in \cite{hu1} before, and its two relatives twisted by $2$-cocycles.

\subsection{}
Given a Hopf algebra $H$, in the investigation of its monoidal module category, the decomposition problem of tensor products of indecomposables is of most importance and has received enormous attentions. One main approach is to explore the ring structure of the corresponding representation ring or say the Green ring of $H$. Originally, the concept of the Green ring $r(H)$ stems from the modular representations of finite groups (see \cite{Gre} etc.).
Since then, there are plenty of works on the Green rings. For finite groups, one can refer to the papers of Green \cite{Gre1}, Benson \cite{BC,BP}, etc. We also mention that Witherspoon computed the Green ring of the Drinfeld double of a finite group in \cite{With}. Chen-Oystaeyen-Zhang studied the Green rings of Taft algebras in \cite{COZ}. On the other hand, Cibils defined a quiver quantum group $\bK Z_n(q)/I_d$ and considered the decomposition of tensor products in \cite{Cib1}. This quiver quantum group is isomorphic to the generalized Taft algebra $H_{n,d}$ defined in \cite{HCZ}. Recently, in order to investigate the Green ring of $H_{n,d}$, Li-Zhang (\cite{LZ}) reformulated the decomposition formulas given by Cibils in \cite{Cib1}. They determined all nilpotent elements in $r(H_{n,d})$. Wakui (\cite{Wa}) also described the representation ring structures for all eight dimensional nonsemisimple Hopf algebras of finite type except for the only one of tame type.
Erdmann et al (\cite{EGST}) determined a large part of the structure of the Green
ring of $D(\Lambda_{n,d})$ modulo projectives where $D(\Lambda_{n,d})$ stands for the Drinfeld doubles of a family of duals of generalized Taft Hopf algebras $\Lambda_{n,d}$ using the different method.

\subsection{}
In \cite{Cib}, Cibils showed that one half of small quantum groups are
all wild when the rank $n\geq2$ and $\mbox{ord}(q)\geq5$. By contrast,
the (generalized) Taft algebras are of finite type. On the other hand,
Feldvoss-Witherspoon (\cite{FW}) proved a conjecture due to Cibils using support variety stating that
all small quantum groups of rank at least two are wild. For the rank one case,
as we know, $\mathfrak{u}_q(\mathfrak{s}\mathfrak{l}_2)$ is tame, the study on indecomposables and their tensor product
decomposition is already perfect (cf. \cite{CP, KS, Sut, Xiao}, etc).

In \cite{hu,hu1}, the second author defined the quantum divided power algebras and the quantized enveloping algebras of abelian Lie algebras. Denote by $\ma_q(n)$ the latter. When $q$ is a root of unity, $\ma_q(n)$ admits a finite-dimensional Hopf quotient. We call it the {\it $n$-rank Taft algebra} (or small abelian quantum group), denoted $\bar{\ma}_q(n)$. When $n=1$, it comes back to the famous Taft algebra (see \cite{Ta}).
In what follows, we will concern the objects of tame type among the $n$-rank Taft (Hopf) algebras. According to the discussion of Ringel in \cite{rin}, the only one being tame is the $2$-rank Taft algebra
$\bar{\ma}_q(2)$ at $q=-1$.
Briefly set $\bar{\ma}:=\bar{\ma}_{-1}(2)$.

\subsection{}
The paper is organized as follows. In Section 2, we begin by giving the definition of the $2$-rank Taft Hopf algebra $\bar{\ma}$ and a complete set of orthogonal primitive idempotents with the Gabriel quiver, and then describe
its indecomposables. We decompose the various tensor products of indecomposables in Section 3. This leads to the description of the Green ring of $\bar{\ma}$, its Jacobson radical and the projective class algebra in Section 4.
Section 5 continues to give two $2$-cocycle twisted Hopf algebras $\bar{\ma}^{\sigma_i}$: $D(H_4)$ and $\mh$ $(=H_4\ot H_4)$, where $H_4$ is the Sweedler Hopf algebra of dimension $4$, and explicitly determine their Green rings via a similar representation-theoretic analysis, together with the Jacobson radicals, the projective class algebras, etc.
It is interesting to notice that even the Hopf algebras $\mh$, $\bar{\ma}$ and $D(H_4)$ are twist-equivalent to each other and are of dimension $16$, they own the different number of blocks with $1$, $2$ and $3$, respectively, whose diverse information on the Green rings are listed in the end of the paper.

As another evidence, we mention the work of Caenepeel-Dascalescu-Raianu (see \cite{CDR}) classifying all pointed Hopf algebras of dimension $16$.
After finishing the paper, we happen to find (and by comparison with those) that the Hopf algebras in question are the exact $3$ iso-classes of pointed Hopf algebras with the Klein group algebra
as the coradicals among the five iso-classes in their classification list, however, the rest do not exist any twist-equivalence with others.
In general, it is not clear how the $2$-cocycle twist of the multiplication
affects the representation ring.  Hopefully, it will stimulate a further research.

Throughout, we work over an algebraically closed field $\bK$ of characteristic $0$. Unless otherwise stated, all (Hopf) algebras and modules defined over $\bK$ are finite-dimensional. Given an algebra $A$, let $A$-mod denote the category of finite-dimensional left $A$-modules.

\section{The $2$-rank Taft algebra $\bar{\ma}$ and its indecomposable modules}
\subsection{}
From \cite[(1.1)(a),(c),(9)]{rin}, we know that the algebras $\bK[x,y,z]/(x^2,y^2,z^2,xy,yz,xz)$, $\bK[x,y]/(x^a,y^b),~a,b\geq2$ are of infinite representation type. Explicitly, it is tame if $a=b=2$ and wild if $a,b\geq2$ but not both equal to 2. It means that if we want to figure out the representation ring of $\bar{\ma}_q(n)$, only the case for $q=-1$ and $n=2$ is reachable when $n\geq2$.

Now we describe the $2$-rank Taft Hopf algebra $\bar{\ma}$ in detail. $\bar{\ma}$ has generators $g, \;h,\; x,\; y$, subject to the following relations,
\begin{equation}\label{rel}
\begin{array}{l}
gh=hg, \quad g^2=h^2=1,\\
gx=-xg,\quad gy=-yg,\quad hx=-xh,\quad hy=-yh,\\
x^2=y^2=0,\quad xy=-yx.
\end{array}
\end{equation}
The comultiplication $\De$ is defined by
\[\De(g)=g\ot g,\quad \De(h)=h\ot h,\quad \De(x)=x\ot 1+g\ot x,\quad \De(y)=y\ot 1+h\ot y.\]

$\bar{\ma}$ has four orthogonal primitive idempotents
\[e_1=\tfrac{1}{4}(1+g+h+gh), \ e_2=\tfrac{1}{4}(1+g-h-gh), \
e_3=\tfrac{1}{4}(1-g+h-gh), \ e_4=\tfrac{1}{4}(1-g-h+gh),\]
and two central primitive idempotents
\[f_1=e_1+e_4=\tfrac{1}{2}(1+gh),\quad f_2=e_2+e_3=\tfrac{1}{2}(1-gh).\]
\begin{lem}
(1) There exist two signs $s_1, s_2\in\{+,-\}$ such that $ge_i=s_1e_i$, $he_i=s_2e_i$ with $(s_1,s_2)=(+,+),(+,-),(-,+),(--)$, for $i=1, 2, 3, 4$, successively.

(2) \ $xe_1=e_4x,\quad  xe_4=e_1x,\quad  ye_1=e_4y,\quad  ye_4=e_1y,\quad  xe_2=e_3x,\quad  xe_3=e_2x$,

\hskip.68cm $ye_2=e_3y,\quad  ye_3=e_2y$.
\end{lem}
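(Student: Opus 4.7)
The plan is to prove both statements by direct computation, exploiting that the four idempotents $e_i$ are built from $\pm$-combinations of $1,g,h,gh$, while $g$ and $h$ are involutions that anti-commute with $x$ and $y$.

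For part (1), I would multiply each $e_i$ on the left by $g$, using only $g^2=1$ and the fact that multiplying the four basis elements $\{1,g,h,gh\}$ by $g$ permutes them as $1\leftrightarrow g$ and $h\leftrightarrow gh$. For instance, $ge_1=\tfrac14(g+g^2+gh+g^2h)=\tfrac14(g+1+gh+h)=e_1$, giving $s_1=+$, and $he_1=\tfrac14(h+gh+1+g)=e_1$ gives $s_2=+$. Running the same computation with the sign pattern encoded in each $e_i$ yields the four cases $(+,+),(+,-),(-,+),(-,-)$ claimed for $i=1,2,3,4$. This is just bookkeeping on four sign vectors.

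For part (2), I would push $x$ (and similarly $y$) through each $e_i$ using the anticommutation relations $gx=-xg$, $hx=-xh$ (so also $(gh)x=x(gh)$, since the two minus signs cancel). Then for example
\[
xe_1=\tfrac14 x(1+g+h+gh)=\tfrac14(x-gx-hx+ghx)=\tfrac14(1-g-h+gh)x=e_4x,
\]
and the analogous computations for $xe_2,xe_3,xe_4$ and for $ye_i$ all reduce to checking how the sign pattern of each $e_i$ is modified when one commutes $x$ (or $y$) past $g$ and $h$: both signs flip, so $(s_1,s_2)\mapsto(-s_1,-s_2)$, which exactly interchanges $e_1\leftrightarrow e_4$ and $e_2\leftrightarrow e_3$ — precisely the eight relations in (2).

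There is no real obstacle here: everything is a finite verification using the defining relations \eqref{rel}. The only point worth a line of justification is the observation that $gh$ commutes with both $x$ and $y$ (since each of $g,h$ anticommutes with them), which is what makes the $e_i$ split into the two central blocks $f_1=e_1+e_4$ and $f_2=e_2+e_3$ and is implicitly used throughout part (2).
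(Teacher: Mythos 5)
Your proposal is correct and is exactly the direct verification that the paper summarizes as ``It is straightforward to check''; both parts reduce to moving $g,h,x,y$ past the linear combinations defining the $e_i$ using the relations \eqref{rel}. The sign-flip observation that commuting $x$ or $y$ past $e_i$ sends $(s_1,s_2)\mapsto(-s_1,-s_2)$, hence swaps $e_1\leftrightarrow e_4$ and $e_2\leftrightarrow e_3$, is a clean way to package the eight identities in part (2).
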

\begin{proof}
It is straightforward to check.
\end{proof}

\subsection{}
Let $S(s_1,s_2)$ denote the one dimensional simple module $\bK$ of $\bar{\ma}$, defined by $g\cdot1=s_11,h\cdot1=s_21,x\cdot1=y\cdot1=0$.
Let $P(s_1,s_2)$ be the projective cover of $S(s_1,s_2)$, which coincides with the principle indecomposable module
$\bar{\ma}e_i$ for some $i\in\{1,2,3,4\}$. As $P(s_1,s_2)$ are non-isomorphic to each other with respect to the signs $s_1,s_2$,
$\bar{\ma}$ is a basic algebra over $\bK$. On the other hand, the radical $J(\bar{\ma})=(x,y)$, thus from the above lemma,
we know that the Gabriel quiver $Q_{\bar{\ma}}$ of $\bar{\ma}$ looks like:
\[e_1\xymatrix@=2em{
\circ\ar@{->}@/^/[r]^-{\be_1}\ar@{->}@/^{1.7em}/[r]^-{\al_1}&\circ\ar@{->}@/^/[l]^-{\be_4}\ar@{->}@/^{1.7em}/[l]^-{\al_4}}e_4~~~
e_2\xymatrix@=2em{\circ\ar@{->}@/^/[r]^-{\be_2}\ar@{->}@/^{1.7em}/[r]^-{\al_2}&\circ\ar@{->}@/^/[l]^-{\be_3}\ar@{->}@/^{1.7em}/[l]^-{\al_3}}e_3,\]
where for $i=1,2,3,4$, the arrows $\al_i,\be_i$ correspond to $xe_i,\, ye_i$, respectively. The admissible ideal $I$ has the following relations:
\[\begin{array}{l}
\al_1\al_4=\al_4\al_1=0,\quad \al_2\al_3=\al_3\al_2=0,\quad
\be_1\be_4=\be_4\be_1=0,\quad \be_2\be_3=\be_3\be_2=0,\\
\be_4\al_1+\al_4\be_1=\be_1\al_4+\al_1\be_4=
\be_3\al_2+\al_3\be_2=\be_2\al_3+\al_2\be_3=0.
\end{array}
\]
Hence, $\bar{\ma}$ decomposes into two block $\bar{\ma}f_1,\bar{\ma}f_2$.
Their representation categories can transfer to each other via the functor $\cdot\ot S(+,-)$.

From the quiver $Q_{\bar{\ma}}$, we know that $\bar{\ma}$ is a special biserial algebra.
All indecomposable modules of such kind of algebras can be completely described.
For the whole theory of special biserial algebras, we refer to \cite[II]{Erd}.
Now we only focus on our target $\bar{\ma}$.

Note that the simple modules $S(s_1,s_2),s_1,s_2\in\{+,-\}$ exhaust all simple modules of $\bar{\ma}$,
thus the projective modules $P(s_1,s_2),s_1,s_2\in\{+,-\}$
are all indecomposable projective modules of $\bar{\ma}$. Moreover, $P(s_1,s_2)\cong P(+,+)\ot S(s_1,s_2)$.
Here we highlight $S(+,+),S(-,-),P(+,+),P(-,-)$ and write them as $1,S,P,P_-$, respectively for short.
The module structure of $P$ can be presented by the following diagram:
\[\xymatrix@=1em{
&{e}\ar@{->}[ld]_-{x} \ar@{.>}[rd]^-{y}&\\
{xe}\ar@{.>}[rd]_-{y}^-{-1}&&{ye}\ar@{->}[ld]^-{x}\\
&{xye}&},\]
where the arrow $\xy0;/r.15pc/:{\ar@{->}(-5,0)*{};(5,0)*{}};(0,2)*{\stt{x}};\endxy$ (resp. $\xy0;/r.15pc/:{\ar@{.>}(-5,0)*{};(5,0)*{}};(0,2)*{\stt{y}};\endxy$)
 represents the action of $x$ (resp. $y$) on the module.
 We abbreviate them to $\xy0;/r.15pc/:{\ar@{->}(-5,0)*{};(5,0)*{}};\endxy$ and $\xy0;/r.15pc/:{\ar@{.>}(-5,0)*{};(5,0)*{}};\endxy$.
 Given $M\in\bar{\ma}\mb{-mod}$, for any $a\in\bK,u,v\in M$, we use $\xy0;/r.15pc/:{\ar@{->}(-5,0)*{};(5,0)*{}};(0,2)*{\stt{a}};(-7,0)*{u};(7,0)*{v};\endxy$ (resp. $\xy0;/r.15pc/:{\ar@{.>}(-5,0)*{};(5,0)*{}};(0,2)*{\stt{a}};(-7,0)*{u};(7,0)*{v};\endxy$) to represent $x\cdot u=av$ (resp. $y\cdot u=av$).
 Moreover, we omit the decoration of the arrow if the weight $a=1$.

Note that any finite dimensional indecomposable module of $\bar{\ma}$ has the Loewy length at most 3. $P(s_1,s_2),~s_1,s_2=\pm$
exhaust all indecomposable modules whose Loewy length are 3. From the theory of special biserial algebras, those with the Loewy length $2$
can be divided into string modules and band modules as follows.

There exist five groups of indecomposable modules: $\{M(r)\}_{r\in\bZ^+}$, $\{W(r)\}_{r\in\bZ^+}$,
$\{N(r)\}_{r\in\bZ^+}$, $\{N'(r)\}_{r\in\bZ^+}$, $\{C(r,\eta)\}_{r\in\bZ^+,\eta\in \bK^\times}$,
where the first four groups are all string modules, and the last one are band modules.
$M(r)=(\op_{i=1}^r \bK u_i)\op(\op_{i=1}^{r+1} \bK v_i)$ is defined by the following diagram:
\[\xymatrix@=1em{
&u_1\ar@{->}[ld] \ar@{.>}[rd]& &u_2\ar@{->}[ld] &\cdots&u_r\ar@{->}[ld] \ar@{.>}[rd]&\\
v_1&&v_2&\cdots&v_r&&v_{r+1}},\]
which is equivalent to the string module $M((\al_1\be_1^{-1})^r)$.

$W(r)=(\op_{i=1}^{r+1} \bK u_i)\op(\op_{i=1}^r \bK v_i)$ is defined by the following diagram:
\[\xymatrix@=1em{
u_1\ar@{->}[rd]& &u_2\ar@{.>}[ld] \ar@{->}[rd] &\cdots&u_r\ar@{->}[rd]&&u_{r+1}\ar@{.>}[ld]\\
&v_1&&v_2&\cdots&v_r&},\]
which is equivalent to the string module $M((\al_1^{-1}\be_1)^r)$.

$N(r)=(\op_{i=1}^r \bK u_i)\op(\op_{i=1}^r \bK v_i)$ is defined by the following diagram:
\[\xymatrix@=1em{
u_1\ar@{->}[rd]&&u_2\ar@{.>}[ld] \ar@{->}[rd] &\cdots&u_{r-1}\ar@{->}[rd]&&u_r\ar@{.>}[ld] \ar@{->}[rd]&\\
&v_1&&v_2&\cdots&v_{r-1}&&v_r},\]
which is equivalent to the string module $M((\al_1^{-1}\be_1)^{r-1}\al_1^{-1})$.

$N'(r)=(\op_{i=1}^r \bK u_i)\op(\op_{i=1}^r \bK v_i)$ is defined by the following diagram:
\[\xymatrix@=1em{
u_1\ar@{.>}[rd]&&u_2\ar@{->}[ld] \ar@{.>}[rd] &\cdots&u_{r-1}\ar@{.>}[rd]&&u_r\ar@{->}[ld] \ar@{.>}[rd]&\\
&v_1&&v_2&\cdots&v_{r-1}&&v_r},\]
which is equivalent to the string module $M((\be_1^{-1}\al_1)^{r-1}\be_1^{-1})$.

$C(r,\eta)=(\op_{i=1}^r \bK u_i)\op(\op_{i=1}^r \bK v_i)$ is defined by the following diagram:
\[\xymatrix@=1em{
u_1\ar@{.>}@/_/[rd]_-{\eta}\ar@{->}@/^/[rd]&&u_2\ar@{.>}[ld] \ar@{.>}@/_/[rd]_-{\eta}\ar@{->}@/^/[rd] &\cdots&u_{r-1}\ar@{.>}@/_/[rd]_-{\eta}\ar@{->}@/^/[rd]&&u_r\ar@{.>}[ld] \ar@{.>}@/_/[rd]_-{\eta}\ar@{->}@/^/[rd]&\\
&v_1&&v_2&\cdots&v_{r-1}&&v_r},\]
which is equivalent to the band module $M(\be_1\al_1^{-1},r,\eta)$. Here $y\cdot u_i=\eta v_i+v_{i-1},~i=1,\dots,r$ and we set $v_0=0$, for convenience.

Meanwhile, for any $M\in\mbox{ind}(\bar{\ma})$ in the five groups above, we fix the trivial actions of $g,h$ on $u_1$ and extend it to the whole modules naturally.
To change the diagonal actions of $g,h$ on $M$, one only needs to consider the tensor product of $M$ with some $S(s_1,s_2)$.
In particular, we also use the notation $M(r)_-,\,W(r)_-,\,C(r,\eta)_-,\,N(r)_-,\,N'(r)_-$, when tensoring with $S(-,-)$. For instance, $C(r,\eta)_-$ corresponds to the band module $M(\be_4\al_4^{-1},r,\eta)$.
Referring to \cite[II]{Erd}, we get

\begin{theorem}\label{ind}
The $5$ groups of modules $\{M(r)\}_{r\in\mathbb{Z}^+}$,
$\{W(r)\}_{r\in\mathbb{Z}^+}$, $\{C(r,\eta)\}_{r\in\mathbb{Z}^+,\eta\in \bK^\times}$, $\{N(r)\}_{r\in\mathbb{Z}^+}$, $\{N'(r)\}_{r\in\mathbb{Z}^+}$,
together with $\{1,P\}$ provide a complete list of isomorphism classes of finite dimensional indecomposable modules of $\bar{\ma}$
when tensoring with all the simple modules $S(s_1,s_2)$.
\end{theorem}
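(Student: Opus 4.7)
The plan is to reduce everything to the known classification of indecomposable modules over special biserial algebras and then enumerate strings and bands for the particular quiver $Q_{\bar{\ma}}$ with its ideal $I$ described above.

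First, I would verify that $\bK Q_{\bar{\ma}}/I$ satisfies the defining axioms of a special biserial algebra in the sense of \cite{Erd}: at each vertex $e_i$ there are exactly two incoming and two outgoing arrows (namely $\alpha_i,\beta_i$ and $\alpha_{\tau(i)},\beta_{\tau(i)}$ with $\tau$ the obvious involution within each block); and for each arrow $\gamma$ there is at most one arrow $\gamma'$ with $\gamma'\gamma\notin I$, and at most one arrow $\gamma''$ with $\gamma\gamma''\notin I$. This follows directly from the monomial relations $\alpha_i\alpha_j=\beta_i\beta_j=0$ and the binomial relations $\beta_{\tau(i)}\alpha_i+\alpha_{\tau(i)}\beta_i=0$ listed above. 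These binomial relations are precisely of the commutation type admitted by the special biserial definition, so the hypotheses apply.

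Next I would invoke the Butler--Ringel / Wald--Waschb\"usch classification: for a special biserial algebra, every finite-dimensional indecomposable module is either (i) an indecomposable projective-injective non-uniserial module, (ii) a string module $M(w)$ for a string word $w$ (up to $w\sim w^{-1}$), or (iii) a band module $M(b,r,\eta)$ for a band $b$ (up to cyclic rotation and inversion), $r\geq 1$, $\eta\in\bK^\times$. In our setting the projectives $P(s_1,s_2)$, being biserial of Loewy length $3$ with socle equal to top, are exactly the modules of type (i), and there are four of them, all obtained from $P=P(+,+)$ by tensoring with the one-dimensionals $S(s_1,s_2)$.

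The bulk of the argument would then be the enumeration. Within each block, the only admissible strings are obtained by alternately juxtaposing direct and inverse arrows among $\alpha_i,\beta_i$ while avoiding the zero relations $\alpha\alpha,\beta\beta$ and the binomial relations. One checks that, up to equivalence $w\sim w^{-1}$, the only infinite families of strings are
\[
(\alpha_1\beta_1^{-1})^r,\quad (\alpha_1^{-1}\beta_1)^r,\quad (\alpha_1^{-1}\beta_1)^{r-1}\alpha_1^{-1},\quad (\beta_1^{-1}\alpha_1)^{r-1}\beta_1^{-1},
\]
giving precisely $M(r),W(r),N(r),N'(r)$; the corresponding module structures follow by unwinding the string-module construction against the signed relation $\beta_4\alpha_1+\alpha_4\beta_1=0$, which explains the $-1$ appearing in the picture of $P$. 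For bands, a word must be cyclic, of even length, composed of non-zero compositions, and not a power of a shorter word; modulo rotation and inversion the unique band in block $f_1$ is $\beta_1\alpha_1^{-1}$, producing the one-parameter family $C(r,\eta)$. The other block $\bar{\ma}f_2$ is handled identically via $\cdot\otimes S(+,-)$.

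The main obstacle I anticipate is the combinatorial enumeration step: being systematic about string equivalence $w\sim w^{-1}$ so that no family is counted twice, and in particular proving that $\beta_1\alpha_1^{-1}$ is (up to the allowed symmetries and taking powers) the only band. Once this is under control, twisting by each of the four $S(s_1,s_2)$ transports all indecomposables of block $f_1$ (with trivial $g,h$-action on $u_1$) across the two blocks and the two choices of central sign, yielding the complete list in the theorem.
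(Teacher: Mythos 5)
Your proposal follows essentially the same route the paper takes: the paper's own proof is a one-line appeal to the theory of special biserial algebras (``Referring to \cite[II]{Erd}, we get''), and you are simply spelling out the standard ingredients that citation packages up---checking the special biserial axioms for $\bK Q_{\bar{\ma}}/I$, invoking the Wald--Waschb\"usch/Butler--Ringel classification into strings, bands, and non-uniserial projective-injectives, and enumerating the resulting words. One small imprecision: $\beta_1\alpha_1^{-1}$ is not literally the unique band in block $f_1$, since $\beta_4\alpha_4^{-1}$ is a distinct cyclic word there (yielding the twisted family $C(r,\eta)_-$); but as the theorem is stated only up to tensoring with the simples $S(s_1,s_2)$, this does not affect the conclusion.
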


\section{Monoidal category $\bar{\ma}$-$\mbox{mod}$}
\subsection{}
In this section, we decompose all the tensor products of indecomposable modules listed in Theorem \ref{ind}.
In \cite{Wa}, the author considered the quasitriangularity and the Green rings of all $8$-dimensional nonsemisimple Hopf algebras over $\bK$,
except the unique tame case, $A_{C_2}$. $A_{C_2}$ also appears as a special case $U_{(2,1,\sqrt{-1})}$ in Gelaki's paper \cite{Gel}.
Note that
 $A_{C_2\times C_2},A_{C_2}$ in Table 1 of \cite{Wa} nicely serve as the Hopf quotients $\bar{\ma}/(y),\bar{\ma}/(g-h)$,
  both of which are self-dual and quasi-triangular. Meanwhile, one should observe that the ideal $(g-h)$ annihilates the following modules,
\[\m{M}=\big\{\,1,\,S,\,P_\pm,\,M(r)_\pm,\,W(r)_\pm,\,N(r)_\pm,\,N'(r)_\pm,\,
C(r,\eta)_\pm\,\mid\,r\in\mathbb{Z}^+,\eta\in \bK^\times\,\big\}.\]
Now we denote $\bar{\ma}':=\bar{\ma}/(g-h)$, then
 $\m{M}$ provides all the indecomposable objects in the representation category $\bar{\ma}'\mb{-mod}$, which is a full subcategory of $\bar{\ma}\mb{-mod}$ via the pull-back.

\subsection{}
In order to give all the tensor product decompositions for $\bar{\ma}$-mod, we first give the following result.
\begin{lem}\label{sim}
For any $M\in \{1,\,P,\,M(r),\,W(r),\,N(r),\,N'(r),\,
\mid r\in\mathbb{Z}^+\}$ and signs $s_1, s_2=\pm$, we have
\[M\ot S(s_1,s_2)\cong S(s_1,s_2)\ot M.\]
By comparison,
\[S(s_1,s_2)\ot C(r,\eta)\cong C(r,s_1s_2\eta)\ot S(s_1,s_2),\quad r\in\mathbb{Z}^+,\eta\in \bK^\times.\]
\end{lem}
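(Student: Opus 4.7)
The plan is to write down explicit module isomorphisms and verify the intertwining conditions by a short case analysis across the five families. The comparison of the two tensor structures comes directly from $\De(x)=x\ot 1+g\ot x$, $\De(y)=y\ot 1+h\ot y$ together with the fact that $x,y$ act as zero on any one-dimensional $\bar{\ma}$-module: on $M\ot S(s_1,s_2)$ the generators act by $g\cdot(m\ot 1)=s_1 gm\ot 1$, $h\cdot(m\ot 1)=s_2 hm\ot 1$, $x\cdot(m\ot 1)=xm\ot 1$, $y\cdot(m\ot 1)=ym\ot 1$, whereas on $S(s_1,s_2)\ot N$ they act by $g\cdot(1\ot n)=s_1(1\ot gn)$, $h\cdot(1\ot n)=s_2(1\ot hn)$, $x\cdot(1\ot n)=s_1(1\ot xn)$, $y\cdot(1\ot n)=s_2(1\ot yn)$. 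Hence the ansatz $\phi(m\ot 1)=1\ot\psi(m)$ converts the problem into exhibiting an invertible linear map $\psi:M\rw N$ with $\psi g=g\psi$, $\psi h=h\psi$, $\psi x=s_1 x\psi$, and $\psi y=s_2 y\psi$.

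For $M\in\{1,P,M(r),W(r),N(r),N'(r)\}$ I would take $N=M$ and let $\psi$ be diagonal in the distinguished basis, i.e. $\psi(u_i)=c_i u_i$ and $\psi(v_j)=d_j v_j$ (the case $M=1$ is trivial). The two required relations then force $d_j=s_1 c_i$ on every $x$-edge $u_i\rw v_j$ and $d_j=s_2 c_i$ on every $y$-edge, while commutation with $g,h$ is automatic since the basis vectors are simultaneous $g,h$-eigenvectors within each block. Because the graphs of $M(r),W(r),N(r),N'(r)$ and of $P$ are trees, these constraints are consistent, and once one normalises $c_1=1$ every remaining coefficient is forced to a monomial in $s_1,s_2$ lying in $\bK^\times$. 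A one-line verification for each type then delivers the iso $M\ot S(s_1,s_2)\cong S(s_1,s_2)\ot M$.

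The band module $C(r,\eta)$ is the only place where the naive ansatz with $N=C(r,\eta)$ breaks: the relation $y\cdot u_i=\eta v_i+v_{i-1}$ couples an $x$-type and a $y$-type contribution on the pair $(i,i)$, forcing incompatibly $d_i=s_1 c_i$ and $d_i=s_2 c_i$ whenever $s_1\ne s_2$. This is precisely what the parameter shift $\eta\mapsto s_1s_2\eta$ repairs: writing $\psi(u_i)=a_i u'_i$, $\psi(v_i)=b_i v'_i$ with primed basis in $C(r,s_1s_2\eta)$, matching the $x$- and $y$-actions gives $a_i=s_1 b_i$ together with $a_i=s_2 b_{i-1}$, hence the recurrence $b_i=s_1 s_2\,b_{i-1}$, solved by $b_i=(s_1s_2)^{i-1}$, $a_i=s_1(s_1s_2)^{i-1}$; the coefficient of $v_i$ in the $y$-action is thereby rescaled from $s_2\eta$ on the source side to $s_1s_2\eta$ on the target side, exactly as required. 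This yields the second isomorphism $S(s_1,s_2)\ot C(r,\eta)\cong C(r,s_1s_2\eta)\ot S(s_1,s_2)$.

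The only conceptual point is recognising that band modules demand a shift of parameter rather than a rescaling within the same module; beyond that no serious obstacle is expected, since each verification reduces to solving a bounded linear recurrence in $\bK^\times$.
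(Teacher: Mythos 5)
Your argument is, in substance, the same as the paper's: both exhibit a diagonal change of basis on $M$ (resp.\ on $C(r,\eta)$, landing in $C(r,s_1s_2\eta)$) that intertwines the two tensor structures, and the paper's twisted diagrams --- e.g.\ $e,\,s_1xe,\,s_2ye,\,s_1s_2xye$ for $S(s_1,s_2)\ot P$ --- are precisely your diagonal $\psi$ written out. Your reduction to the intertwining conditions $\psi g=g\psi$, $\psi h=h\psi$, $\psi x=s_1x\psi$, $\psi y=s_2y\psi$ is correct, and so is the band-module analysis, where you rightly observe that the naive intertwiner fails on the $\eta v_i$ term of $y\cdot u_i$ and that the shift $\eta\mapsto s_1s_2\eta$ is what repairs it.

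The one gap is the justification of consistency for $P$. You write that the edge constraints on the diagonal coefficients are consistent ``because the graphs of $M(r),W(r),N(r),N'(r)$ and of $P$ are trees.'' That is true for the four string families, whose diagrams are zigzag paths, but false for $P$: the diagram of $P$ is the diamond $e\rightarrow xe$, $e\rightarrow ye$, $xe\rightarrow xye$, $ye\rightarrow xye$, which contains a $4$-cycle. On a diagram with a cycle, edge constraints of the form $c_{\mathrm{tail}}=s\,c_{\mathrm{head}}$ are consistent precisely when the product of the prescribed scalars around every cycle is $1$; the tree hypothesis is exactly what would make that condition vacuous. For $P$ the two paths from $e$ to $xye$ force $c_{xye}=s_2s_1\,c_e$ (through $xe$) and $c_{xye}=s_1s_2\,c_e$ (through $ye$), which agree because $s_1,s_2\in\{\pm1\}$ commute --- so the conclusion holds, but the reason you give skips the one module where an actual compatibility check around a loop is required, and that check should be spelled out rather than dismissed by a (false) acyclicity claim.
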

\begin{proof}
First we abbreviate any $1\ot u\in S(s_1,s_2)\ot M,\,u\ot1\in M\ot S(s_1,s_2)$ as $u$. Then for $S(s_1,s_2)\ot P$, we have the diagram
\[\xymatrix@=1em{
&{e}\ar@{->}[ld]\ar@{.>}[rd]&\\
{s_1xe}\ar@{.>}[rd]_-{-1}&&{s_2ye}\ar@{->}[ld]\\
&{s_1s_2xye}&},\]
where the signs $s_1,s_2$ serve as $\pm1$. Now since $e$ has the sign $(s_1,s_2)$, we get $S(s_1,s_2)\ot P\cong P\ot S(s_1,s_2)$. For $S(s_1,s_2)\ot M(r)$, we have
\[\xymatrix@=1em{
&u_1\ar@{->}[ld] \ar@{.>}[rd]& &s_1s_2u_2\ar@{->}[ld] \ar@{.>}[rd]&&u_3\ar@{->}[ld] \ar@{.>}[rd]&\cdots&&\\
s_1v_1&&s_2v_2&&s_1v_3&&s_2v_4&\cdots}\]
such that $u_1$ has the sign $(s_1,s_2)$, thus the isomorphism holds. The cases for $W(r),N(r),N(r)'$ are quite similar. For $S(s_1,s_2)\ot C(r,\eta)$, we have the diagram
\[\xymatrix@=1em{
u_1\ar@{.>}@/_/[rd]_-{s_1s_2\eta}\ar@{->}@/^/[rd]
&&s_1s_2u_2\ar@{.>}[ld] \ar@{.>}@/_/[rd]_-{s_1s_2\eta}\ar@{->}@/^/[rd] &&u_3\ar@{.>}[ld] \ar@{.>}@/_/[rd]_-{s_1s_2\eta}\ar@{->}@/^/[rd]
&&s_1s_2u_4\ar@{.>}[ld]&\cdots\\
&s_1v_1&&s_2v_2&&s_1v_3&\cdots&}\]
such that $u_1$ has the sign $(s_1,s_2)$, thus $S(s_1,s_2)\ot C(r,\eta)\cong C(r,s_1s_2\eta)\ot S(s_1,s_2)$.
\end{proof}

Now we only need to focus on the decompositions of modules in \[\big\{\,P,\,M(r),\,W(r),\,N(r),\,N'(r)\mid r\in\mathbb{Z}^+\,\big\}.\]
Note that their tensor products commute as $\bar{\ma}'$ is quasi-triangular,
hence we only need to provide a one-sided version of decompositions of tensor products of them.

\begin{theorem}\label{dec}
For any $r,s\in\mathbb{Z}^+,\eta,\ga\in \bK^\times$, we have

(1) $P\ot P\cong P^{\op2}\op P_-^{\op2}$.

(2) $M(r)\ot P\cong P^{\op r}\op P_-^{\op r+1},\quad W(r)\ot P\cong P^{\op r+1}\op P_-^{\op r}$.

(3) $C(r,\eta)\ot P\cong N(r)\ot P\cong N'(r)\ot P\cong P^{\op r}\op P_-^{\op r}$.

(4) $M(r)\ot M(s)\cong P^{\op rs}\op M(r+s)_-$.

(5) $W(r)\ot W(s)\cong P^{\op rs}\op W(r+s)$.

(6) $M(r)\ot W(s)\cong\begin{cases}
P^{\op r(s+1)}\op W(s-r)_-,& r<s,\\
P^{\op r(r+1)}\op S,& r=s,\\
P^{\op (r+1)s}\op M(r-s),& r>s.
\end{cases}$

(7) $M(r)\ot C(s,\eta)\cong P^{\op rs}\op C(s,\eta)_-$.

(8) $W(r)\ot C(s,\eta)\cong P^{\op rs}\op C(s,\eta)$.

(9) $M(r)\ot N(s)\cong  P^{\op rs}\op N(s)_-,\quad M(r)\ot N'(s)\cong P^{\op rs}\op N'(s)_-$.

(10) $W(r)\ot N(s)\cong P^{\op rs}\op N(s),\quad W(r)\ot N'(s)\cong P^{\op rs}\op N'(s)$.

(11) $C(r,\eta)\ot C(s,\ga)\cong
 \begin{cases}
   P^{\op rs},&\eta\neq\ga,\\
   P^{\op(rs-\bs{min}\{r,s\})}\op C(\mb{min}\{r,s\},\eta)\op C(\mb{min}\{r,s\},\eta)_-,&\eta=\ga.
 \end{cases}$

(12) $N(r)\ot N(s)\cong P^{\op(rs-\bs{min}(r,s))}\op N(\mbox{min}(r,s))\op N(\mbox{min}(r,s))_-$.

(13) $N'(r)\ot N'(s)\cong P^{\op(rs-\bs{min}(r,s))}\op N'(\mbox{min}(r,s))\op N'(\mbox{min}(r,s))_-$.

(14) $N(r)\ot N'(s)\cong C(r,\eta)\ot N(s)\cong C(r,\eta)\ot N'(s)\cong P^{\op rs}$.

\end{theorem}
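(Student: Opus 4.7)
The plan is to handle the fourteen cases by explicit computation with the coproduct formulas $\Delta(x)=x\ot 1+g\ot x$ and $\Delta(y)=y\ot 1+h\ot y$, together with the sign relations in \eqref{rel}, and then to identify the resulting direct summands by matching them against the classification in Theorem \ref{ind}. Since every indecomposable in the list lies in the full subcategory $\bar{\ma}'\mbox{-mod}$, where $g=h$, and $\bar{\ma}'$ is quasitriangular, tensor products commute up to isomorphism, so only one-sided versions need to be worked out. Lemma \ref{sim} further lets me assume at the outset that the sign twists are trivial; the correct twist on each summand will be read off at the end from the $g,h$-eigenvalues on the chosen generator.

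For items (1)--(3), which all involve $P$, the argument is structural. Because $P$ is projective and $\bar{\ma}$ is a Hopf algebra, $P\ot N$ is projective for every finite dimensional $N$, so the tensor product decomposes into copies of $P$ and $P_-$ only. The multiplicities are then fixed by counting the occurrences of the simples $1$ and $S$ in the head, and this count can be read off directly from the action of the central idempotents $f_1,f_2$ (equivalently, from the $gh$-eigenvalues) on the diagrammatic basis of $N$. Dimension bookkeeping confirms the totals claimed.

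For the string--string and string--band cases (4)--(10), (12)--(14), I would proceed as follows. Write down the explicit product basis $\{u_i\ot u'_j,\ u_i\ot v'_j,\ v_i\ot u'_j,\ v_i\ot v'_j\}$ coming from the two defining diagrams, and compute the actions of $x$ and $y$ through $\Delta$, using that $x$ and $y$ commute with $g,h$ only up to a sign. Then locate \emph{projective generators}: vectors $w$ for which $\{w,\,xw,\,yw,\,xyw\}$ is linearly independent, so that $\bar{\ma}w\cong P$ (or $P_-$, depending on the $g,h$-weight of $w$). Splitting off a maximal family of such submodules leaves a complementary submodule of known dimension and Loewy length at most $2$, whose head/socle pattern and weight string match one of $M(r'),\,W(r'),\,N(r'),\,N'(r')$ by Theorem \ref{ind}; this pins down both the type and the sign twist. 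The counts come out to the stated multiplicities $rs$ of $P$ and the single non-projective summand in each formula.

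The main obstacle is case (11), the band--band product $C(r,\eta)\ot C(s,\ga)$, since here the parameters are continuous and the outcome depends on whether $\eta=\ga$. I would organize the product basis into $r\times s$ pairs corresponding to $u_i\ot u'_j$ and $v_i\ot v'_j$, and examine the $2\times 2$ blocks that govern the $y$-action modulo the $P$-radical; these blocks are built from $\eta$ and $\ga$ by the coproduct rule. When $\eta\neq\ga$ the blocks have distinct eigenvalues, hence diagonalize; every generator becomes a projective generator and the decomposition collapses to $P^{\op rs}$. When $\eta=\ga$ the blocks are non-semisimple, and the resulting Jordan structure forces exactly $\min\{r,s\}$ of the would-be projective generators to fail, producing two band summands whose $g$-weight pattern identifies them as $C(\min\{r,s\},\eta)$ and $C(\min\{r,s\},\eta)_-$, with the sign twist dictated by the diagonal action of $g\ot g=h\ot h$. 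The same block analysis, with simpler shapes, takes care of (12)--(14).
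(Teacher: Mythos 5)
The broad strategy here—explicit basis/diagram computation followed by matching the summands against the classification of Theorem~\ref{ind}, with Lemma~\ref{sim} used to dispose of sign twists—is the same as the paper's, so the core plan is sound. But two parts of the sketch need attention.

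For items (1)--(3) your shortcut is not correct as stated. The central idempotents $f_1,f_2$ (equivalently, the $gh$-eigenvalues) only separate the two blocks; every basis vector of $N\in\{P,M(r),W(r),N(r),N'(r),C(r,\eta)\}$ already has $gh$-eigenvalue $+1$, so this datum is constant and cannot distinguish the multiplicities of $P$ from $P_-$. Worse, the naive idea of reading off the answer from composition factors also fails outright: in this block the Cartan matrix is $\left(\begin{smallmatrix}2&2\\2&2\end{smallmatrix}\right)$, which is singular, and $P$ and $P_-$ have identical $g$-eigenspace dimensions, so neither the Grothendieck class nor a grading count of $N\ot P$ pins down $a,b$ in $N\ot P\cong P^{\op a}\op P_-^{\op b}$. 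What does work is to use the Hopf adjunction together with the self-duality of $P$ (it is Frobenius with $\mathrm{soc}(P)\cong\mathrm{head}(P)\cong 1$): the multiplicity of $P$ in $N\ot P$ equals $\dim\mathrm{Hom}(N\ot P,1)\cong\dim\mathrm{Hom}(N,P^*)=\dim\mathrm{Hom}(N,P)=[N:1]$, the number of diagram vertices of $N$ carrying the sign $(+,+)$, i.e.\ the $g$-eigenvalue $+1$ (and $e_1$-weight), not the $gh$-eigenvalue. With that repair the structural argument is actually a nice alternative to the paper's direct diagram computation for (1)--(3).

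For (4)--(10) you propose direct computation of projective generators and a complement for arbitrary $r,s$. This can in principle be pushed through, but it is much heavier than the route the paper takes: the paper establishes only the base case $r=1$ by an explicit diagram, and then obtains the general case by comparing the two associativity bracketings $(M(1)\ot M(r))\ot M(s)$ and $M(1)\ot(M(r)\ot M(s))$ and invoking Krull--Schmidt. You should say how you will actually exhibit a maximal projective family and verify the isomorphism type of the complement for arbitrary $r,s$; without something like the paper's induction, that step is where the real work hides. Finally, a small inaccuracy: within (4)--(10),(12)--(14) you assert "the single non-projective summand in each formula," but (12) and (13) have two non-projective summands and (14) has none, so the uniform description needs to be split up as you later do when lumping (12)--(14) with the band--band analysis of (11).
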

\begin{proof}
In order to simplify the proof, we only provide the diagrams for all the module structures.
The elements in the second position of a tensor product will be added a superscript $'$, thus we can distinguish the notations.

(1) \[\begin{array}{l}
\raisebox{2.4em}{\xymatrix@=1em{
&{\stt{e\ot e'}}\ar@{->}[ld] \ar@{.>}[rd]&\\
{\stt{xe\ot e'+e\ot xe'}}\ar@{.>}[rd]&&{\stt{ye\ot e'+e\ot ye'}}\ar@{->}[ld]^-{-1}\\
&{\begin{split}
&\stt{ye\ot xe'-xe\ot ye'}\\
&\stt{-xye\ot e'-e\ot xye'}
\end{split}}&}},\quad
\raisebox{2.4em}{\xymatrix@=1em{
&{\stt{e\ot xe'}}\ar@{->}[ld] \ar@{.>}[rd]&\\
{\stt{xe\ot xe'}}\ar@{.>}[rd]&&{\stt{ye\ot xe'-e\ot xye'}}\ar@{->}[ld]^-{-1}\\
&{\stt{xe\ot xye'-xye\ot xe'}}&}}\\
\raisebox{2.4em}{\xymatrix@=1em{
&{\stt{e\ot ye'}}\ar@{->}[ld] \ar@{.>}[rd]&\\
{\stt{xe\ot ye'+e\ot xye'}}\ar@{.>}[rd]&&{\stt{ye\ot ye'}}\ar@{->}[ld]^-{-1}\\
&{\stt{ye\ot xye'-xye\ot ye'}}&}},\quad
\raisebox{2.4em}{\xymatrix@=1em{
&{\stt{xe\ot ye'}}\ar@{->}[ld] \ar@{.>}[rd]&\\
{\stt{-xe\ot xye'}}\ar@{.>}[rd]&&{\stt{-xye\ot ye'}}\ar@{->}[ld]^-{-1}\\
&{\stt{xye\ot xye'}}&}}
\end{array}\]
Since the collection of vectors in all the vertices forms a basis of $P\ot P$, we get the desired decomposition $P^{\op 2}\op P_-^{\op 2}$.
All the decompositions below can be read similarly.

(2) In the case of $M(r)\ot P$, we have
\[\raisebox{2.6em}{\xymatrix@=1em{
&{\stt{u_i\ot e}}\ar@{->}[ld] \ar@{.>}[rd]&\\
{\stt{v_i\ot e+u_i\ot xe}}\ar@{.>}[rd]&&{\stt{v_{i+1}\ot e+u_i\ot ye}}\ar@{->}[ld]^-{-1}\\
&{\stt{-v_i\ot ye+v_{i+1}\ot xe-u_i\ot xye}}&}},\quad\raisebox{2.6em}{\xymatrix@=1em{
&{\stt{v_j\ot e}}\ar@{->}[ld] \ar@{.>}[rd]&\\
{\stt{-v_j\ot xe}}\ar@{.>}[rd]_-{-1}&&{\stt{-v_j\ot ye}}\ar@{->}[ld]\\
&{\stt{v_j\ot xye}}&}}\]
for $i=1,\dots,r,j=1,\dots,r+1$, while in the case of $W(r)\ot P$,
\[\raisebox{2.6em}{\xymatrix@=1em{
&{\stt{u_i\ot e}}\ar@{->}[ld] \ar@{.>}[rd]&\\
{\stt{v_i\ot e+u_i\ot xe}}\ar@{.>}[rd]&&{\stt{v_{i-1}\ot e+u_i\ot ye}}\ar@{->}[ld]^-{-1}\\
&{\stt{-v_i\ot ye+v_{i-1}\ot xe-u_i\ot xye}}&}},\quad\raisebox{2.6em}{\xymatrix@=1em{
&{\stt{v_j\ot e}}\ar@{->}[ld] \ar@{.>}[rd]&\\
{\stt{-v_j\ot xe}}\ar@{.>}[rd]_-{-1}&&{\stt{-v_j\ot ye}}\ar@{->}[ld]\\
&{\stt{v_j\ot xye}}&}}\]
for $i=1,\dots,r+1,j=1,\dots,r$, where we take $v_0=0$ for simplicity.

(3) For the case $C(r,\eta)\ot P$, we have
\[\raisebox{2.6em}{\xymatrix@=1em{
&{\stt{u_i\ot e}}\ar@{->}[ld] \ar@{.>}[rd]&\\
{\stt{v_i\ot e+u_i\ot xe}}\ar@{.>}[rd]&&{\stt{(\eta v_i+v_{i-1})\ot e+u_i\ot ye}}\ar@{->}[ld]^-{-1}\\
&{\begin{split}
&\stt{-v_i\ot ye+(\eta v_i+v_{i-1})\ot xe}\\
&\stt{-u_i\ot xye}
\end{split}}&}},\quad\raisebox{2.6em}{\xymatrix@=1em{
&{\stt{v_i\ot e}}\ar@{->}[ld] \ar@{.>}[rd]&\\
{\stt{-v_i\ot xe}}\ar@{.>}[rd]_-{-1}&&{\stt{-v_i\ot ye}}\ar@{->}[ld]\\
&{\stt{v_i\ot xye}}&}}\]
where $i=1,\dots,r$. For $N(r)\ot P$, it has the same diagram as that of $M(r)\ot P$ but with $i=j=1,\cdots,r$,
while $N'(r)\ot P$ is similar to that of $W(r)\ot P$.

(4) We first prove the base case $M(1)\ot M(s)$ as follows.
\[\xymatrix@=1em{
&{\stt{u_1\ot u'_i}}\ar@{->}[ld] \ar@{.>}[rd]&\\
{\stt{v_1\ot u'_i+u_1\ot v'_i}}\ar@{.>}[rd]&&{\stt{v_2\ot u'_i+u_1\ot v'_{i+1}}}\ar@{->}[ld]^-{-1}\\
&{\stt{v_2\ot v'_i-v_1\ot v'_{i+1}}}&}\]
for $i=1,\dots,s$. Meanwhile,
\[\xymatrix@=1em{
&\stt{u_1\ot v'_1}\ar@{->}[ld] \ar@{.>}[rd]& &\stt{-v_2\ot u'_1}\ar@{->}[ld] &\cdots&\stt{-v_2\ot u'_s}\ar@{->}[ld] \ar@{.>}[rd]&\\
\stt{v_1\ot v'_1}&&\stt{v_2\ot v'_1}&\cdots&\stt{v_2\ot v'_s}&&\stt{v_2\ot v'_{s+1}}}.\]
Now we prove the general case $M(r)\ot M(s)$ by induction on $r$. From (2)  and the base case, we see that
\[(M(1)\ot M(r)) \ot M(s)\cong(P^{\op r}\op M(r{+}1)_-)\ot M(s)\cong P^{\op rs}\op P_-^{\op r(s{+}1)}\op(M(r{+}1)_-\ot M(s)).\]
On the other hand, using (2) and the induction hypothesis, we have
\[M(1)\ot(M(r)\ot M(s))\cong M(1)\ot(P^{\op rs}\op M(r{+}s)_-)\cong (P^{\op rs}\op P_-^{\op 2rs})\op(P_-^{\op r+s}\op M(r{+}s{+}1)).\]
By the Krull-Schmidt theorem, they combine to give $M(r{+}1)\ot M(s)\cong P^{\op(r{+}1)s}\op M(r{+}s{+}1)_-$.

(5) The base case $W(1)\ot W(s)$ is as follows.
\[\xymatrix@=1em{
&{\stt{u_1\ot u'_i}}\ar@{->}[ld] \ar@{.>}[rd]&\\
{\stt{u_1\ot v'_i+v_1\ot u'_i}}\ar@{.>}[rd]_-{-1}&&{\stt{u_1\ot v'_{i-1}}}\ar@{->}[ld]\\
&{\stt{v_1\ot v'_{i-1}}}&}\]
for $i=2,\dots,s+1$, where we take $v'_{s+1}=0$ for simplicity. Meanwhile,
\[\xymatrix@C=0.2em{
\stt{u_1\ot u'_1}\ar@{->}[rd]& &\stt{u_1\ot u'_2+u_2\ot u'_1}\ar@{.>}[ld] \ar@{->}[rd] &\cdots&\stt{u_1\ot u'_{s+1}+u_2\ot u'_s}\ar@{->}[rd]&&\stt{u_2\ot u'_{s+1}}\ar@{.>}[ld]\\
&\stt{u_1\ot v'_1+v_1\ot u'_1}&&\stt{v_1\ot u'_2+u_1\ot v'_2+u_2\ot v'_1}&\cdots&\stt{v_1\ot u'_{s+1}+u_2\ot v'_s}&}.\]
Now we prove the general case $W(r)\ot W(s)$ by induction on $r$. From (2)  and the base case, we see that
\[(W(1)\ot W(r)) \ot W(s)\cong(P^{\op r}\op W(r{+}1))\ot W(s)\cong P^{\op r(s+1)}\op {P_-}^{\op rs}\op(W(r{+}1)\ot W(s)).\]
On the other hand, using (2) and the induction hypothesis, we have
\[W(1)\ot(W(r)\ot W(s))\cong W(1)\ot(P^{\op rs}\op W(r{+}s))\cong (P^{\op 2rs}\op P_-^{\op rs})\op(P^{\op r+s}\op W(r{+}s{+}1)).\]
By the Krull-Schmidt theorem, they combine to give $W(r{+}1)\ot W(s)\cong P^{\op(r+1)s}\op W(r{+}s{+}1)$.

(6) Here we only deal with the case $r\leq s$. When $r>s$, the proof is similar. First, the base case $M(1)\ot W(1)$ is as follows.
\[\raisebox{2.6em}{\xymatrix@=1em{
&{\stt{u_1\ot u'_1}}\ar@{->}[ld] \ar@{.>}[rd]&\\
{\stt{v_1\ot u'_1+u_1\ot v'_1}}\ar@{.>}[rd]&&{\stt{v_2\ot u'_1}}\ar@{->}[ld]^-{-1}\\
&{\stt{v_2\ot v'_1}}&}},\quad\raisebox{2.6em}{\xymatrix@=1em{
&{\stt{u_1\ot u'_2}}\ar@{->}[ld] \ar@{.>}[rd]&\\
{\stt{v_1\ot u'_2}}\ar@{.>}[rd]_-{-1}&&{\stt{v_2\ot u'_2+u_1\ot v'_1}}\ar@{->}[ld]\\
&{\stt{v_1\ot v'_1}}&}}.\]
Meanwhile, $v_1\ot u'_1+u_1\ot v'_1+v_2\ot u'_2$ spans the simple module $S$.

Next, we consider $M(1)\ot W(s),~s>1$ as follows. By the base case and (2),
we have
\[(M(1)\ot W(1))\ot W(s{-}1)\cong(P^{\op2}\op S)\ot W(s{-}1)\cong P^{\op 2s}\op P_-^{\op 2(s-1)}\op W(s{-}1)_-.\]
On the other hand, from the base case, (2) and (5), we get
\[M(1)\ot(W(1)\ot W(s{-}1))\cong M(1)\ot(P^{\op s-1}\op W(s))\cong(P^{\op s-1}\op P_-^{\op 2(s-1)})\op(M(1)\ot W(s)).\]
They combine to give $M(1)\ot W(s)\cong P^{\op s+1}\op W(s{-}1)_-$.

Now we decompose $M(r)\ot W(s),~r\leq s$ by induction on $r$. First, we have
\[(M(1)\ot M(r{-}1))\ot W(s)\cong(P^{\op r-1}\op M(r)_-)\ot W(s)\cong P^{\op(r-1)(s{+}1)}\op P_-^{\op(r-1)s}\op(M(r)_-\ot W(s)).\]
On the other hand, by induction, we have
\[\begin{split}
M(1)\ot(M(r{-}1)\ot W(s))&\cong M(1)\ot(P^{\op(r-1)(s{+}1)}\op W(s{-}r{+}1)_-)\\
&\cong P^{\op(r-1)(s{+}1)}\op P_-^{\op2(r-1)(s{+}1)}\op(M(1)\ot W(s{-}r{+}1)_-).
\end{split}\]
They combine to give
\[M(r)\ot W(s)\cong P^{\op(r-1)(s{+}2)}\op(M(1)\ot W(s{-}r{+}1))\cong
\begin{cases}
P^{\op r(s+1)}\op W(s{-}r)_-,& r<s,\\
P^{\op r(r+1)}\op S,& r=s.
\end{cases}\]

(7) The induction step on $r$ is quite similar to (4) via the associativity. We only give the base case $M(1)\ot C(s,\eta)$ as follows,
\[\xymatrix@=1em{
&{\stt{u_1\ot u'_i}}\ar@{->}[ld] \ar@{.>}[rd]&\\
{\stt{u_1\ot v'_i+v_1\ot u'_i}}\ar@{.>}[rd]&&{\stt{u_1\ot(\eta v'_i+v'_{i-1})+v_2\ot u'_i}}\ar@{->}[ld]^-{-1}\\
&{\stt{-v_1\ot(\eta v'_i+v'_{i-1})+v_2\ot v'_i}}&}\]
for $i=1,\dots,s$. Meanwhile,
\[\xymatrix@=1em{
\stt{v_1\ot u'_1}\ar@{.>}@/_/[rd]_-{\eta}\ar@{->}@/^/[rd]&&\stt{v_1\ot u'_2}\ar@{.>}[ld] \ar@{.>}@/_/[rd]_-{\eta}\ar@{->}@/^/[rd] &\cdots&\stt{v_1\ot u'_{s-1}}\ar@{.>}@/_/[rd]_-{\eta}\ar@{->}@/^/[rd]&&\stt{v_1\ot u'_s}\ar@{.>}[ld] \ar@{.>}@/_/[rd]_-{\eta}\ar@{->}@/^/[rd]&\\
&\stt{-v_1\ot v'_1}&&\stt{-v_1\ot v'_2}&\cdots&\stt{-v_1\ot v'_{s-1}}&&\stt{-v_1\ot v'_s}}.\]

(8) The induction step on $r$ is quite similar to (5), we only give the base case $W(1)\ot C(s,\eta)$ as follows.
\[\xymatrix@=1em{
&{\stt{u_1\ot u'_i}}\ar@{->}[ld] \ar@{.>}[rd]&\\
{\stt{u_1\ot v'_i+v_1\ot u'_i}}\ar@{.>}[rd]_-{-1}&&{\stt{u_1\ot(\eta v'_i+v'_{i-1})}}\ar@{->}[ld]\\
&{\stt{v_1\ot(\eta v'_i+v'_{i-1})}}&}\]
for $i=1,\dots,s$. Meanwhile,
\[\xymatrix@=1em{
\stt{U_1}\ar@{.>}@/_/[rd]_-{\eta}\ar@{->}@/^/[rd]&&\stt{U_2}\ar@{.>}[ld] \ar@{.>}@/_/[rd]_-{\eta}\ar@{->}@/^/[rd] &\cdots&\stt{U_{s-1}}\ar@{.>}@/_/[rd]_-{\eta}\ar@{->}@/^/[rd]&&\stt{U_s}\ar@{.>}[ld] \ar@{.>}@/_/[rd]_-{\eta}\ar@{->}@/^/[rd]&\\
&\stt{V_1}&&\stt{V_2}&\cdots&\stt{V_{s-1}}&&\stt{V_s}},\]
where $U_i=u_2\ot(\eta u'_i+u'_{i-1})+u_1\ot u'_i,V_i=u_2\ot(\eta v'_i+v'_{i-1})+v_1\ot u'_i+u_1\ot v'_i,~i=1,\dots,s$. Note that $y\cdot U_i=\eta V_i+V_{i-1}$.


(9) We only give the base case $M(1)\ot N(s)$ as follows.
\[\xymatrix@=1em{
&{\stt{u_1\ot u'_i}}\ar@{->}[ld] \ar@{.>}[rd]&\\
{\stt{u_1\ot v'_i+v_1\ot u'_i}}\ar@{.>}[rd]&&{\stt{u_1\ot v'_{i-1}+v_2\ot u'_i}}\ar@{->}[ld]^-{-1}\\
&{\stt{-v_1\ot v'_{i-1}+v_2\ot v'_i}}&}\]
for $i=1,\dots,s$, where we take $v'_0=0$ for simplicity. Meanwhile,
\[\xymatrix@=1em{
\stt{v_1\ot u'_1}\ar@{->}[rd]&&\stt{v_1\ot u'_2}\ar@{.>}[ld] \ar@{->}[rd] &\cdots&\stt{v_1\ot u'_{s-1}}\ar@{->}[rd]&&\stt{v_1\ot u'_s}\ar@{.>}[ld] \ar@{->}[rd]&\\
&\stt{v_1\ot v'_1}&&\stt{v_1\ot v'_2}&\cdots&\stt{v_1\ot v'_{s-1}}&&\stt{v_1\ot v'_s}}.\]
The case of $M(1)\ot N'(s)$ is quite similar.

(10) The base case $W(1)\ot N(s)$ is as follows.
\[\raisebox{2.6em}{\xymatrix@=1em{
&{\stt{u_1\ot u'_i}}\ar@{->}[ld] \ar@{.>}[rd]&\\
{\stt{u_1\ot v'_i+v_1\ot u'_i}}\ar@{.>}[rd]_-{-1}&&{\stt{u_1\ot v'_{i-1}}}\ar@{->}[ld]\\
&{\stt{v_1\ot v'_{i-1}}}&}},\quad\raisebox{2.6em}{\xymatrix@=1em{
&{\stt{u_2\ot u'_s}}\ar@{->}[ld] \ar@{.>}[rd]&\\
{\stt{u_2\ot v'_s}}\ar@{.>}[rd]&&{\stt{v_1\ot u'_s+u_2\ot v'_{s-1}}}\ar@{->}[ld]^-{-1}\\
&{\stt{v_1\ot v'_s}}&}}\]
for $i=2,\dots,s$. Meanwhile,
\[\xymatrix@C=-2em{
\stt{u_1\ot u'_1}\ar@{->}[rd]&&\stt{u_1\ot u'_2+u_2\ot u'_1}\ar@{.>}[ld] \ar@{->}[rd] &\cdots&\stt{u_1\ot u'_{s-1}+u_2\ot u'_{s-2}}\ar@{->}[rd]&&\stt{u_1\ot u'_s+u_2\ot u'_{s-1}}\ar@{.>}[ld] \ar@{->}[rd]&\\
&\stt{u_1\ot v'_1+v_1\ot u'_1}&&\stt{u_1\ot v'_2+v_1\ot u'_2+u_2\ot v'_1}&\cdots&\stt{u_1\ot v'_{s-1}+v_1\ot u'_{s-1}+u_2\ot v'_{s-2}}&&\stt{u_1\ot v'_s+v_1\ot u'_s+u_2\ot v'_{s-1}}}.\]
The case of $W(1)\ot N'(s)$ is quite similar.

(11) When $\eta\neq\ga$, the decomposition of $C(r,\eta)\ot C(s,\ga)$ is given by:
\[\xymatrix@=0.3em{
&{\stt{u_i\ot u'_j}}\ar@{->}[ld] \ar@{.>}[rd]&\\
{\stt{v_i\ot u'_j+u_i\ot v'_j}}\ar@{.>}[rd]_-{-1}&&{\begin{split}
&\stt{(\eta v_i+v_{i-1})\ot u'_j}\\
&\stt{+u_i\ot (\ga v'_j+v'_{j-1})}
\end{split}}\ar@{->}[ld]\\
&{\begin{split}
&\stt{-(\eta v_i+v_{i-1})\ot v'_j}\\
&\stt{+v_i\ot (\ga v'_j+v'_{j-1})}
\end{split}
}&},\]
for $i=1,\dots,r,j=1,\dots,s$.

When $\eta=\ga$, assume $r\leq s$ without loss of generality, then $C(r,\eta)\ot C(s,\ga)$ decomposes as follows:
\[\xymatrix@=1em{
\stt{U_1}\ar@{.>}@/_/[rd]_-{\eta}\ar@{->}@/^/[rd]&&\stt{U_2}\ar@{.>}[ld] \ar@{.>}@/_/[rd]_-{\eta}\ar@{->}@/^/[rd] &\cdots&\stt{U_{r-1}}\ar@{.>}@/_/[rd]_-{\eta}\ar@{->}@/^/[rd]&&\stt{U_r}\ar@{.>}[ld] \ar@{.>}@/_/[rd]_-{\eta}\ar@{->}@/^/[rd]&\\
&\stt{V_1}&&\stt{V_2}&\cdots&\stt{V_{r-1}}&&\stt{V_r}},\]
where $U_k=\sum_{i=1}^ku_i\ot u'_{k+1-i},V_k=\sum_{i=1}^k v_i\ot u'_{k+1-i}+u_i\ot v'_{k+1-i},~k=1,\dots,r$. And $y\cdot U_i=\eta V_i+V_{i-1}$. Meanwhile,
\[\xymatrix@=1em{
\stt{u_1\ot v'_s}\ar@{.>}@/_/[rd]_-{\eta}\ar@{->}@/^/[rd]&&\stt{u_2\ot v_s}\ar@{.>}[ld] \ar@{.>}@/_/[rd]_-{\eta}\ar@{->}@/^/[rd] &\cdots&\stt{u_{r-1}\ot v'_s}\ar@{.>}@/_/[rd]_-{\eta}\ar@{->}@/^/[rd]&&\stt{u_r\ot v'_s}\ar@{.>}[ld] \ar@{.>}@/_/[rd]_-{\eta}\ar@{->}@/^/[rd]&\\
&\stt{v_1\ot v'_s}&&\stt{v_2\ot v'_s}&\cdots&\stt{u_{r-1}\ot v'_s}&&\stt{u_r\ot v'_s}}.\]
Moreover, we take
\[\xymatrix@=0.3em{
&{\stt{u_i\ot u'_j}}\ar@{->}[ld] \ar@{.>}[rd]&\\
{\stt{v_i\ot u'_j+u_i\ot v'_j}}\ar@{.>}[rd]_-{-1}&&{\begin{split}
&\stt{\eta(v_i\ot u'_j+u_i\ot v'_j)}\\
&\stt{+v_{i-1}\ot u'_j+u_i\ot v'_{j-1}}
\end{split}}\ar@{->}[ld]\\
&{\stt{v_{i-1}\ot v'_j-v_i\ot v'_{j-1}}}&}\]
for $i=1,\dots,r,j=2,\dots,s$. Then
$C(r,\eta)\ot C(s,\eta)\cong P^{\op(r(s-1))}\op C(r,\eta)\op C(r,\eta)_-$.

(12) Without loss of generality, we assume that $r\leq s$. It gives
 \[
\xymatrix@=1em{
\stt{U_1}\ar@{->}[rd]&&\stt{U_2}
\ar@{.>}[ld] \ar@{->}[rd] &\cdots&\stt{U_{r-1}}\ar@{->}[rd]&&\stt{U_r}\ar@{.>}[ld] \ar@{->}[rd]&\\
&\stt{V_1}&&\stt{V_2}&\cdots&\stt{V_{r-1}}&&\stt{V_r}},
\]
where $U_k=\sum_{i=1}^ku_i\ot u'_{k+1-i},V_k=\sum_{i=1}^k v_i\ot u'_{k+1-i}+u_i\ot v'_{k+1-i},~k=1,\dots,r$. Meanwhile,
\[
\xymatrix@=1em{
\stt{u_1\ot v'_s}\ar@{->}[rd]&&\stt{u_2\ot v'_s}\ar@{.>}[ld] \ar@{->}[rd] &\cdots&\stt{u_{r-1}\ot v'_s}\ar@{->}[rd]&&\stt{u_r\ot v'_s}\ar@{.>}[ld] \ar@{->}[rd]&\\
&\stt{v_1\ot v'_s}&&\stt{v_2\ot v'_s}&\cdots&\stt{v_{r-1}\ot v'_s}&&\stt{v_r\ot v'_s}}.\]

 \[\xymatrix@=1em{
&{\stt{u_i\ot u'_j}}\ar@{->}[ld] \ar@{.>}[rd]&\\
{\stt{v_i\ot u'_j+u_i\ot v'_j}}\ar@{.>}[rd]&&{\stt{v_{i-1}\ot u'_j+u_i\ot v'_{j-1}}}\ar@{->}[ld]^-{-1}\\
&{\stt{-v_i\ot v'_{j-1}+v_{i-1}\ot v'_j}}&}\]
for $i=1,\dots,r,j=2,\dots,s$, where we take $v_0=0$ for simplicity.

(13) It is similar to (12).

(14) For $N(r)\ot N'(s)$, we have
\[\xymatrix@=1em{
&{\stt{u_i\ot u'_j}}\ar@{->}[ld] \ar@{.>}[rd]&\\
{\stt{v_i\ot u'_j+u_i\ot v'_{j-1}}}\ar@{.>}[rd]&&{\stt{v_{i-1}\ot u'_j+u_i\ot v'_j}}\ar@{->}[ld]^-{-1}\\
&{\stt{-v_i\ot v'_j+v_{i-1}\ot v'_{j-1}}}&}\]
for $i=1,\dots,r,j=1,\dots,s$, where we take $v_0=v'_0=0$ for simplicity.

For $C(r,\eta)\ot N(s)$, we have
\[\xymatrix@=1em{
&{\stt{u_i\ot u'_j}}\ar@{->}[ld] \ar@{.>}[rd]&\\
{\stt{v_i\ot u'_j+u_i\ot v'_j}}\ar@{.>}[rd]&&{\stt{(\eta v_i+v_{i-1})\ot u'_j+u_i\ot v'_{j-1}}}\ar@{->}[ld]^-{-1}\\
&{\stt{-v_i\ot v'_{j-1}+(\eta v_i+v_{i-1})\ot v'_j}}&}\]
for $i=1,\dots,r,j=1,\dots,s$, where we take $v'_0=0$ for simplicity.

For $C(r,\eta)\ot N'(s)$, we have
\[\xymatrix@=1em{
&{\stt{u_i\ot u'_j}}\ar@{->}[ld] \ar@{.>}[rd]&\\
{\stt{v_i\ot u'_j+u_i\ot v'_{j-1}}}\ar@{.>}[rd]&&{\stt{(\eta v_i+v_{i-1})\ot u'_j+u_i\ot v'_j}}\ar@{->}[ld]^-{-1}\\
&{\stt{-v_i\ot v'_j+(\eta v_i+v_{i-1})\ot v'_{j-1}}}&}\]
for $i=1,\dots,r,j=1,\dots,s$, where we take $v'_0=0$ for simplicity.
\end{proof}

\section{The Green ring of $\bar{\ma}$ and its Jacobson radical}
\subsection{}
Let $H$ be a Hopf algebra. Recall that the \textit{Green} (or \textit{representation}) \textit{ring} $r(H)$ and
the \textit{Green algebra} $R(H)$ of $H$ can be defined as follows. $r(H)$ is the abelian group  generated by the
isomorphism classes $[M]$ of $M\in H\mb{-mod}$ modulo the relations $[M\op N] =[M]+[N]$.
The multiplication of $r(H)$ is given by
the tensor product of $H$-modules, that is, $[M][N]=[M\ot N]$. Then $r(H)$ is an associative ring with the identity $[\bK_\ve]$,
where $\bK_\ve$ is the trivial $H$-module. $R(H)$ is an associative $\bK$-algebra defined
by $\bK\ot_\mathbb{Z}r(H)$. Note that $r(H)$ is a free abelian group with a $\mathbb{Z}$-basis $\{\,[M]\mid M\in\mbox{ind}(H)\,\}$,
where $\mb{ind}(H)$ denotes the category of finite-dimensional indecomposable
$H$-modules. If $H$ is a quasitriangular Hopf algebra, then $M\ot N\cong N\ot M$ as $H$-modules.
In this case, $r(H)$ is a commutative ring.

 Combining Lemma \ref{sim} and Theorem \ref{dec}, we derive the Green ring $r(\bar{\ma})$ of $\bar{\ma}$.
\begin{cor}\label{green}
The Green ring $r(\bar{\ma}')$ of $\bar{\ma}'$ is a commutative ring generated by
\[ \Big\{\, [S(-,-)],\, [P],\, [M(1)],\, [W(1)]\, \Big\}\cup\Big\{\, [C(r,\eta)],\, [N(r)],\, [N'(r)]\, \Big\}_{r\in \mathbb{Z}^+,\eta\in\bK^\times},\]
subject to the following relations
\[\begin{array}{l}
S^2=1,\\
P^2=2(1{+}S)P,\quad MP=(1{+}2S)P,\quad WP=(2{+}S)P,\quad C_{r,\eta}P=N_rP=N'_rP=r(1{+}S)P,\\
MW=2P+S,\quad MC_{r,\eta}=rP+SC_{r,\eta},\quad MN_r=rP+SN_r,\quad MN'_r=rP+SN'_r,\\
WC_{r,\eta}=rP+C_{r,\eta},\quad WN_r=rP+N_r,\quad WN'_r=rP+N'_r,\\
C_{r,\eta}C_{s,\ga}=\begin{cases}
   rsP,&\eta\neq\ga,\\
   (rs-\mb{min}\{r,s\})P+(1{+}S)C_{\bs{min}\{r,s\},\eta},&\eta=\ga,
 \end{cases}\\
N_rN_s=(rs-\mbox{min}(r,s))P+(1{+}S)N_{\bs{min}(r,s)},\quad N'_rN'_s=(rs-\mbox{min}(r,s))P+(1{+}S)N'_{\bs{min}(r,s)},\\
C_{r,\eta}N_s=C_{r,\eta}N'_s=N_rN'_s=rsP,
\end{array}\]
where we abbreviate $[S(-,-)]$, $[P(+,+)]$, $[M(1)]$, $[W(1)]$, $[C(r,\eta)]$, $[N(r)]$,
$[N'(r)]$ to $S$, $P$, $M$, $W$, $C_{r,\eta}$, $N_r$, $N'_r$, successively. Moreover, the Green ring $r(\bar{\ma}')$ is just a subring of $r(\bar{\ma})$, which has one more generator $S_-:=[S(+,-)]$ and additional relations$:$ $S_-^2=1$, and
 \[[S,S_-]=[P,S_-]=[M,S_-]=[W,S_-]=[N_r,S_-]
 =[N'_r,S_-]=0,\,S_-C_{r,\eta}=C_{r,-\eta}S_-.\]
\end{cor}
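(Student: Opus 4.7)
The plan is to translate the monoidal decompositions of Theorem \ref{dec} and the twist interactions of Lemma \ref{sim} directly into identities in the representation ring. As noted before Theorem \ref{dec}, the quotient $\bar{\ma}'=\bar{\ma}/(g-h)$ is quasitriangular, so $r(\bar{\ma}')$ is commutative. By Theorem \ref{ind}, a $\bZ$-basis of $r(\bar{\ma}')$ is furnished by the isomorphism classes of the indecomposables in $\m{M}$, namely $[1], [S], [P], [P_-]$ together with $[M(r)_{\pm}], [W(r)_{\pm}], [N(r)_{\pm}], [N'(r)_{\pm}], [C(r,\eta)_{\pm}]$ for $r\in\bZ^+,\eta\in\bK^\times$. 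The proof proceeds in two steps: first extract the generating set listed in the corollary, then verify each displayed relation by reading off the corresponding formula from Theorem \ref{dec}.

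For the first step, Lemma \ref{sim} shows that for any $X\in\{P, M(r), W(r), N(r), N'(r)\}$ we have $X_-\cong X\ot S(-,-)$, so $[P_-]=SP$ and similarly the $-$-variants of $M(r), W(r), N(r), N'(r)$ are obtained by multiplying the untwisted classes by $S$. Likewise $[C(r,\eta)_-]=SC_{r,\eta}$ because $s_1s_2=1$ for $S(-,-)$. It remains to express $M(r),W(r)$ for $r\geq 2$ in terms of $M,W,S,P$: this follows by induction from Theorem \ref{dec}(4), which gives $[M(r)][M(s)]=rsP+S[M(r+s)]$, and (5), which gives $[W(r)][W(s)]=rsP+[W(r+s)]$. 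By contrast, Theorem \ref{dec}(11)--(13) show that the self-products of $C_{r,\eta}, N_r, N'_r$ produce the minimum-index summand rather than higher-index ones, so these must be kept as generators for every $r$ (and every $\eta$).

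For the second step, each displayed relation is obtained by reading off the tensor decomposition from Theorem \ref{dec} and substituting $[P_-]=SP$ together with the corresponding identifications for the $-$-variants. For instance, $P^2=2P+2SP=2(1+S)P$ comes from (1); $MP=P+2SP=(1+2S)P$ and $WP=2P+SP=(2+S)P$ from (2); $C_{r,\eta}P=r(1+S)P$ from (3); $MW=2P+S$ from the case $r=s=1$ of (6); and the quadratic identity for $C_{r,\eta}C_{s,\eta}$ with $\eta=\gamma$ from (11), combined with $[C(\min\{r,s\},\eta)_-]=SC_{\min\{r,s\},\eta}$. The remaining relations are entirely analogous. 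Finally, to pass from $r(\bar{\ma}')$ to $r(\bar{\ma})$, we adjoin $S_-:=[S(+,-)]$ (with $S_-^2=1$) and derive the commutation relations from Lemma \ref{sim}: the first part yields $S_- X=X S_-$ for $X\in\{S,P,M,W,N_r,N'_r\}$, while the second part, with $s_1s_2=-1$ for $S(+,-)$, yields $S_-C_{r,\eta}=C_{r,-\eta}S_-$. The main subtlety throughout is bookkeeping: one must consistently identify the $-$-twisted indecomposables as products with $S$ (inside $r(\bar{\ma}')$) or with $S_-$ (inside $r(\bar{\ma})$) so that Theorem \ref{dec} converts cleanly into the ring-theoretic multiplication table.
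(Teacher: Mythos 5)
Your proof is correct and follows exactly the same route the paper takes: the paper's entire justification is ``Combining Lemma \ref{sim} and Theorem \ref{dec}, we derive the Green ring,'' and you have simply unpacked that one line into the mechanical translation of direct sums into ring identities. The only minor slip is attributive: the identification $X_-\cong X\ot S(-,-)$ is the paper's \emph{definition} of the $-$-subscript (end of Section 2.2), not a consequence of Lemma \ref{sim} (which instead supplies the commutation isomorphisms $M\ot S(s_1,s_2)\cong S(s_1,s_2)\ot M$ and $S(s_1,s_2)\ot C(r,\eta)\cong C(r,s_1s_2\eta)\ot S(s_1,s_2)$ that you use correctly for the $S_-$ relations in $r(\bar{\ma})$).
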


\begin{rem}
In contrast to the commutativity of the Green ring of the generalized Taft algebra $H_{n,d}$ mentioned in \cite[Cor. 3.2]{LZ}, $r(\bar{\ma})$ is non-commutative.
\end{rem}

\subsection{}
 Recall that the \textit{projective class ring} $p(\bar{\ma})$ of $\bar{\ma}$ is the subring of $r(\bar{\ma})$
 generated by the projective modules and the simple modules (An interesting discussion see \cite{Cib2}). We consider the \textit{projective class algebra}
 $p(\bar{\ma})=\bK[S,S_-,P]/(S^2{-}1,S_-^2{-}1,P^2{-}2(1{+}S)P)$. The Jacobson radical $J(p(\bar{\ma}))=((1{-}S)P)$,
 thus \[p(\bar{\ma})/J(p(\bar{\ma}))=\bK[S,S_-,P]/
 (S^2-1,S_-^2-1,P^2-4P,(1-S)P)\cong\bK^6,\]
with the orthogonal idempotents $(1\pm S_-)(1-S)/4,\,(1\pm S_-)P/8,\,(1\pm S_-)(1-\tfrac{P}{4}-\tfrac{1-S}{2})/2$.

Note that all projective modules in $r(\bar{\ma})$ generate an ideal $\mathcal {P}$,
and one can define the \textit{stable Green ring} of $\bar{\ma}$ as $\mb{St}(\bar{\ma})=r(\bar{\ma})/\mathcal {P}$. The stable Green ring was introduced in the study of Green rings for the modular representation theory of finite groups (see \cite{Ben}).  In order to compute the Jacobson radicals of such rings, we need the following Lemma.
\begin{lem}[{\cite[Chapter 2, 4.3]{Lam}}]\label{jac}
For an arbitrary ring $R$ and $y\in R$, the following statements are equivalent:

(1) $y\in J(R)$;

(2) $1-xyz\in U(R)$ (the group of units of $R$) for any $x,z\in R$.
\end{lem}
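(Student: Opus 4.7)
The plan is to reduce both implications to the classical one-variable characterization of the Jacobson radical, namely that an element $a\in R$ lies in $J(R)$ if and only if $1-ra$ is a unit for every $r\in R$. Once this one-variable version is in hand, the two-variable statement is a short corollary that exploits the fact that $J(R)$ is a two-sided ideal, even though it is defined as the intersection of maximal left ideals.

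First I would recall the proof of the one-sided version. For the direction $a\in J(R)\Rightarrow 1-ra\in U(R)$, note that if $1-ra$ failed to be left-invertible then $R(1-ra)$ would sit inside some maximal left ideal $M$; but $ra\in J(R)\subseteq M$, forcing $1\in M$, a contradiction. To upgrade \emph{left}-invertible to \emph{two-sided} unit, take $u$ with $u(1-ra)=1$, whence $u=1+ura$ with $ura\in J(R)$; applying the same left-invertibility argument to $u$ shows $u$ is itself left-invertible, hence already a two-sided inverse of $1-ra$. Conversely, if $a\notin J(R)$, choose a maximal left ideal $M$ with $a\notin M$; then $M+Ra=R$, so $1=m+ra$ for some $m\in M$ and $r\in R$, and $m=1-ra$ lies in $M$ and cannot be a unit.

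With this in hand, $(1)\Rightarrow(2)$ is immediate: if $y\in J(R)$, then $yz\in J(R)$ because $J(R)$ absorbs right multiplication, and so $1-xyz=1-x(yz)$ is a unit by the one-variable criterion applied to $yz$ in place of $a$. For $(2)\Rightarrow(1)$, the specialization $z=1$ gives $1-xy\in U(R)$ for every $x\in R$, and then the one-variable criterion delivers $y\in J(R)$.

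The only delicate point in this plan is the promotion from one-sided invertibility to a genuine two-sided unit inside the one-variable lemma; every other step is a routine application of the maximal-ideal description of $J(R)$. Since the statement is quoted verbatim from Lam's textbook, I expect the paper to cite the reference rather than reproduce any of the argument, and the lemma will then be used purely as a tool to test membership in the Jacobson radicals of the various representation rings computed in the sequel.
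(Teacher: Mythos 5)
Your proof is correct, and, as you correctly anticipated, the paper offers no proof at all: it simply cites Lam's textbook (Chapter~2, Lemma~4.3) and then uses the criterion as a black box to test membership in the Jacobson radicals of the various Green rings. So there is no paper argument to compare against; your write-up supplies a self-contained proof where the paper relies on a citation. For the record, your reduction follows the standard route Lam himself uses: the one-variable statement (``$a\in J(R)$ iff $1-ra$ is a unit for all $r$'') is established via the maximal-left-ideal description of $J(R)$, with the left-invertibility of $1-ra$ upgraded to a two-sided unit by writing $u=1+ura$ and applying the same argument to $u$; the two-variable version then follows because $J(R)$ is a two-sided ideal (so $yz\in J(R)$) for $(1)\Rightarrow(2)$, and by specializing $z=1$ for $(2)\Rightarrow(1)$. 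The one genuinely delicate step, which you correctly flagged, is precisely that promotion from one-sided to two-sided invertibility, and your handling of it is sound.
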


Now for the Green algebras $R(\bar{\ma}'),R(\bar{\ma})$, we have
\begin{theorem}\label{rad}
For the quotient $\bar{\ma}'$ of $\bar{\ma}$, the Jacobson radicals of $\mb{St}(\bar{\ma}')$, $R(\bar{\ma}')$ are respectively equal to
\begin{gather*}
J(\mb{St}(\bar{\ma}'))=\lb(S{-}1)N_r,\, (S{-}1)N'_r,\, (S{-}1)C_{r,\eta}\;\Bigl|\;r\in\bZ^+,\eta\in\bK^\times\rb\Bigr.,\\
J(R(\bar{\ma}'))=\lb(S{-}1)P,\, (S{-}1)N_r,\, (S{-}1)N'_r,\, (S{-}1)C_{r,\eta}\;\Bigl|\;r\in\bZ^+,\eta\in\bK^\times\rb\Bigr..
\end{gather*}
Furthermore, $J(\mb{St}(\bar{\ma})),\,J(R(\bar{\ma}))$ have the same presentations as above.
\end{theorem}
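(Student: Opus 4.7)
The plan is to prove $J(R(\bar{\ma}')) = I$, where $I := \langle (S-1)P,\, (S-1)N_r,\, (S-1)N'_r,\, (S-1)C_{r,\eta}\rangle$, via two inclusions. For $I \subseteq J(R(\bar{\ma}'))$, I show $I$ is nilpotent. The two key identities are $(S-1)^2 = -2(S-1)$ and $(S-1)(1+S) = 0$. Applied to the multiplication relations in Corollary~\ref{green}, every product of two generators of $I$ collapses into a scalar multiple of $(S-1)P$; for instance,
\[(S-1)N_r \cdot (S-1)N_s = -2(S-1)\bigl[(rs - \min(r,s))P + (1+S)N_{\min(r,s)}\bigr] = -2(rs - \min(r,s))(S-1)P,\]
since the $(1+S)$-summand is killed by the second identity. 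Moreover $(S-1)P$ annihilates every generator of $I$ by the same identity, so $I^3 = 0$; hence $I$ is nil and $I \subseteq J(R(\bar{\ma}'))$ by Lemma~\ref{jac}.

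For the reverse inclusion, the strategy is to decompose $R(\bar{\ma}')/I$ via central idempotents and exhibit each block as Jacobson semisimple. Since $S$ is central with $S^2 = 1$, I split via $e_\pm = (1 \pm S)/2$. In the $e_-$-block, each of $P, N_r, N'_r, C_{r,\eta}$ vanishes modulo $I$, and $MW = 2P + S$ yields $(e_-M)(e_-W) = -e_-$; together with the recursion $e_-M_r = (-1)^{r-1}(e_-M)^r$ obtained from $MM_r = rP + SM_{r+1}$, this identifies the block with the Laurent polynomial ring $\bK[e_-M,(e_-M)^{-1}]$, which is Jacobson semisimple. In the $e_+$-block, setting $S = 1$ makes $P/4$ into a central idempotent (as $P^2 = 4P$). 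Splitting on $P/4$ gives a one-dimensional summand (with character values $M = W = 3$, $P = 4$, $N_r = N'_r = C_{r,\eta} = 2r$) and a complementary summand $A$ where $P = 0$ and $MW = 1$. In $A$, the subring $B := \bK[M, M^{-1}]$ acts on the ideal $\mathcal K$ generated by $\{N_r, N'_r, C_{r,\eta}\}$ via the augmentation $M \mapsto 1$, and the relations $N_rN_s = 2N_{\min(r,s)}$, $N'_rN'_s = 2N'_{\min(r,s)}$, $C_{r,\eta}C_{s,\eta} = 2C_{\min(r,s),\eta}$ (with all other cross-products zero) make the differenced elements $f_r := (N_r - N_{r-1})/2$, $f'_r$, $h_{r,\eta}$ into an infinite system of mutually orthogonal idempotents. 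Combined with the Laurent characters of $A/\mathcal K \cong B$, the corresponding evaluation characters $A \to \bK$ separate every nonzero element of $A$, yielding $J(A) = 0$ and hence $J(R(\bar{\ma}')/I) = 0$.

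For $\mb{St}(\bar{\ma}')$ the projective ideal is already quotiented out, so $(S-1)P \equiv 0$ drops from the generating list and the remaining argument descends verbatim. For $R(\bar{\ma})$, the extra generator $S_-$ with $S_-^2 = 1$ commutes with every generator except $C_{r,\eta}$, on which it acts by $S_-C_{r,\eta}S_-^{-1} = C_{r,-\eta}$; this exhibits $R(\bar{\ma}) \cong R(\bar{\ma}') \rtimes \bZ/2$ as a skew group ring. The generating set of $I$ is $\bZ/2$-stable (the action pairs $C_{r,\eta} \leftrightarrow C_{r,-\eta}$), so $I^3 = 0$ persists, and the block decomposition extends naturally: the $e_-$ and one-dimensional blocks tensor with $\bK[S_-]/(S_-^2-1) \cong \bK \oplus \bK$ and remain Jacobson semisimple, while the band block $A$ acquires a $\bZ/2$-action pairing the characters at $(r, \eta)$ and $(r, -\eta)$; since $2$ is invertible in $\bK$, Jacobson semisimplicity is preserved under this skew extension by Maschke-type averaging. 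The main obstacle I expect is the detailed analysis of the band block $A$: it is infinite-dimensional with a countably generated ideal $\mathcal K$, so one must carefully confirm that the differencing construction yields a genuinely orthogonal system and that the induced characters — together with those of the Laurent quotient $B$ — truly separate every nonzero element of $A$.
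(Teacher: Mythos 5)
Your overall strategy is sound and gives a genuinely different route from the paper's. Both proofs start identically: the ideal $I$ generated by $(S-1)P,\,(S-1)N_r,\,(S-1)N'_r,\,(S-1)C_{r,\eta}$ is nilpotent because $(S-1)^2=-2(S-1)$ kills the $(1+S)$-terms in the multiplication table, so $I\subseteq J$. Where you diverge is in proving $J(R(\bar{\ma}')/I)=0$. The paper passes immediately to $A:=\mb{St}(\bar{\ma}')/J$, observes that the differences $(N_{r+1}-N_r)/2$, $(N'_{r+1}-N'_r)/2$, $(C_{r+1,\eta}-C_{r,\eta})/2$ are orthogonal idempotents whose $\bK$-span is a two-sided ideal with quotient $\bK[\bZ\times\bZ_2]$, then kills the radical by Lemma~\ref{jac}: for any nonzero $x$ in that span, $1-a_r^{-1}x$ annihilates the relevant idempotent and hence is a zero divisor, so $x\notin J(A)$. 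You instead split off $e_\pm=(1\pm S)/2$ and $P/4$ first, reducing to the band block $A$, and conclude via separation of characters. Both routes rest on the same orthogonal idempotent family; yours trades the one-line unit-criterion argument for an explicit description of the maximal spectrum, which is slightly more work but more transparent. The ``obstacle'' you flag (checking the characters separate) does go through, essentially because the idempotents $f_r,f'_r,h_{r,\eta}$ are a $\bK$-basis of $\mathcal{K}$ and $A/\mathcal{K}$ is a Laurent ring.

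The one genuine gap is the final step for $R(\bar{\ma})$. Identifying $R(\bar{\ma})\cong R(\bar{\ma}')*\bZ_2$ as a skew group ring is correct, and $I$ being $\bZ_2$-stable is right, but the assertion that ``Jacobson semisimplicity is preserved under this skew extension by Maschke-type averaging'' is not a standard Maschke application, because $R(\bar{\ma}')/I$ is infinite-dimensional and $R(\bar{\ma})/I$ is non-commutative. What you actually need is a crossed-product theorem (e.g., from Montgomery or Passman) that $J(R*G)=0$ when $R$ is commutative Jacobson-semisimple, $G$ finite, and $|G|^{-1}\in R$; this combines two non-trivial facts, roughly $J(R*G)^{|G|}\subseteq J(R)*G$ and semiprimeness of $R*G$. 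By contrast, the paper sidesteps this: in $R(\bar{\ma})$ it exhibits the explicit $\bK$-basis $(1\pm S_-)(N_{r+1}-N_r)$, etc., of the idempotent ideal and repeats the same zero-divisor computation (showing directly that $1-(8c_{r,\eta})^{-1}(1+S_-)x(C_{r+1,\eta}-C_{r,\eta})$ is not a unit). You should either cite the crossed-product result precisely or replace the averaging appeal with a concrete check in the spirit of the commutative case; as stated this step is not self-contained.
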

\begin{proof}
We first deal with $\mb{St}(\bar{\ma}')$. Take $J=\big((S{-}1)N_r,(S{-}1)N'_r,(S{-}1)C_{r,\eta}\mid r\in\bZ^+,\eta\in\bK^\times\big)$
and $A=\mb{St}(\bar{\ma}')/J$. Clearly, $J$ is nilpotent, we only need to show that $A$ is Jacobson semisimple.
First one can check that
\[\big\{\,(N_{r+1}{-}N_r)/2,\,(N'_{r+1}{-}N'_r)/2,\,
(C_{r+1,\eta}{-}C_{r,\eta})/2\,\big|\,r\in\bN,\eta\in\bK^\times\,\big\}\]
is a set of orthogonal idempotents in $A$, where we set $N_0=N'_0=C_{0,\eta}=0$ for simplicity. Denote by $I$ the ideal of $A$ generated by the set of such idempotents. These idempotents also form a $\bK$-basis of $I$. Let $\bar{A}:=A/I$, then $\bar{A}$ is generated by $S,M,W$ and isomorphic to $\bK[\bZ\times\bZ_2]$ as $S^2=1,\,SMW=1$. $\bar{A}$ is clearly Jacobson semisimple and thus $J(A)\subseteq I$. Now let $x=\sum_{r\in\bN}a_r(N_{r+1}-N_r)/2+b_r(N'_{r+1}-N'_r)/2+
\sum_{r\in\bN,\eta\in\bK^\times}c_{r,\eta}(C_{r+1,\eta}-C_{r,\eta})/2$ be an arbitrary element in $I$, where $\{a_r,b_r,c_{r,\eta}\in\bK,\,r\in\bN,\eta\in\bK^\times\}$ only has finite non-zero elements. If $x\neq0$, then we can assume that there exists some non-zero $a_r$ without loss of generality such that $\lb1-a_r^{-1}x\rb(N_{r+1}-N_r)=0$. It means that $1-a_r^{-1}x$ is a non-zero divisor and not invertible, hence by Lemma \ref{jac}
$x\notin J(A)$, i.e. $J(A)=J(A)\cap I=0$.

Next we deal with the case of $R(\bar{\ma}')$. From the short exact sequence $0\rw\mathcal {P}\rw R(\bar{\ma}')\stackrel{\pi}{\rw}\mb{St}(\bar{\ma}')\rw0$,
we see that
\[J(R(\bar{\ma}'))\subset \pi^{-1}(J(\mb{St}(\bar{\ma}')))=\lb P,\,(S{-}1)N_r,\,(S{-}1)N'_r,\,(S{-}1)C_{r,\eta}\;\big|\;r\in\bZ^+,\eta\in\bK^\times\rb.\]
Meanwhile, since $(S{-}1)P, (S{-}1)N_r, (S{-}1)N'_r, (S{-}1)C_{r,\eta}\; ~(r\in\bZ^+,\eta\in\bK^\times)$ are all nilpotent in $R(\bar{\ma}')$,
they all lie in $J(R(\bar{\ma}'))$. In order to see that they generate $J(R(\bar{\ma}'))$,
one only needs to check that $(S{+}1)P/8$ is an idempotent and the ideal $((S{+}1)P)=\bK(S{+}1)P$, thus $((S{+}1)P)\cap J(R(\bar{\ma}'))=0$.

The case of $R(\bar{\ma})$ is a little bit more complicated. Analogously, we set \[J:=\lb(S{-}1)N_r,\,(S{-}1)N'_r,\,(S{-}1)C_{r,\eta}
\;\big|\;r\in\bZ^+,\eta\in\bK^\times\rb\]
as a nilpotent ideal of $\mb{St}(\bar{\ma})$ and $A:=\mb{St}(\bar{\ma})/J$.
Also, set $\bar{A}:=A/I$, where
\[I:=\lb(N_{r+1}-N_r)/2,\,(N'_{r+1}-N'_r)/2,
\,(C_{r+1,\eta}-C_{r,\eta})/2\,\big|\,r\in\bN,\eta\in\bK^\times\rb.\]
Then $\bar{A}$ is generated by $\{S,S_-,M,W\}$ and isomorphic to $\bK[\bZ_2\times\bZ_2\times\bZ]$, thus Jacobson semisimple.
Again we only need to show that $J(A)=J(A)\cap I=0$.
Now note that
\[
\Big\{\,(1\pm S_-)(N_{r+1}-N_r),\,(1\pm S_-)(N'_{r+1}-N'_r),\,(1\pm S_-)(C_{r+1,\eta}-C_{r,\eta})\,\big|\,r\in\bN,\eta\in\bK^\times\,\Big\}
\]
forms a $\bK$-basis of $I$. let $x:=\sum_{r\in\bN}(a_r(1+S_-)+a'_r(1-S_-))(N_{r+1}-N_r)
+(b_r(1+S_-)+b'_r(1-S_-))(N'_{r+1}-N'_r)+
\sum_{r\in\bN,\eta\in\bK^\times}
(c_{r,\eta}(1+S_-)+c'_{r,\eta}(1-S_-))(C_{r+1,\eta}-C_{r,\eta})$ be an arbitrary element in $I$ as before. If $x\neq0$,
then we may assume that there exists some non-zero $c_{r,\eta}$ without loss of generality. It is easy to check that
\[\begin{array}{l}
(1+S_-)x(C_{r+1,\eta}-C_{r,\eta})=4c_{r,\eta}(1+S_-)(C_{r+1,\eta}-C_{r,\eta}),\\
\lb(1+S_-)(C_{r+1,\eta}-C_{r,\eta})\rb^2
=2(1+S_-)(C_{r+1,\eta}-C_{r,\eta}).
\end{array}\]
Hence,
\[\lb1-(8c_{r,\eta})^{-1}(1+S_-)x(C_{r+1,\eta}-C_{r,\eta})\rb
(1+S_-)(C_{r+1,\eta}-C_{r,\eta})=0\]
and $1-(8c_{r,\eta})^{-1}(1+S_-)x(C_{r+1,\eta}-C_{r,\eta})$ is not invertible, thus $x\notin J(A)$, i.e. $J(A)=0$, by Lemma \ref{jac} again. Now in order to apply such result to the case of $R(\bar{\ma})$, one only needs to see that $p_\pm:=(1\pm S_-)(S{+}1)P/16$ are orthogonal idempotents and the ideal $(p_\pm)=\mb{span}_\bK\{p_\pm\}$, thus $(p_\pm)\cap J(R(\bar{\ma}))=0$.
\end{proof}

\section{The Green rings of two $2$-cocycle twists of $\bar{\ma}$}

In this section, we will discuss two $2$-cocycle twists of $\bar{\ma}$, both of which involve the $4$-dimensional
Sweedler algebra $H_4$. $H_4=\bK\lan a, b\ran/(a^2-1, b^2, ab+ba)$ is the simplest example of
noncommutative and noncocommuative Hopf algebra with $S^2\ne 1$.

\subsection{Two $2$-cocycle twists of $\bar{\ma}$, $H_4\ot H_4$ and $D(H_4)$} First of all,
let us briefly recall the definition of $2$-cocycle twist of a Hopf algebra (see \cite{dt}).
Associated with a $2$-cocycle $\si$ as a bilinear form defined on a bialgebra $H$,
which is invertible under the convolution product and satisfies
\begin{gather*}
\si(a,1)=\si(1,a)=\ve(a),\quad a\in H,\\
\si(a_{(1)},b_{(1)})\,\si(a_{(2)}b_{(2)},c)=\si(b_{(1)},c_{(1)})\,\si(a,b_{(2)}c_{(2)}),\quad a,\, b,\, c\in H,
\end{gather*}
one can construct a new bialgebra $(H^\si,\cdot_\si,\De,\ve)$ with
\[a\cdot_\si b=\si(a_{(1)},b_{(1)})\,a_{(2)}b_{(2)}\,\si^{-1}(a_{(3)},b_{(3)}),\quad a,\, b\in H.\]
Moreover, if $H$ is a Hopf algebra with the antipode $S$, then the antipode $S^\si$ of $H^\si$ is given by
$S^\si(a)=\lan \msu,a_{(1)}\ran\, S(a_{(2)})\,\lan \msu^{-1},a_{(3)}\ran$, for $a\in H^\si$,
where $\msu=\si\circ(\mi\ot S)\circ\De\in H^*$ with the inverse $\msu^{-1}=\si^{-1}\circ(S\ot\mi)\circ\De$.

The first twist involved is just the tensor product $H_4\ot H_4$ of the Sweedler Hopf algebra $H_4$,
with the corresponding $2$-cocycle $\si_1$ on $\bar{\ma}$ given by:
\[\si_1(u,v)=
\begin{cases}
(-1)^{a_1b_2},&u=g^{a_1}h^{a_2},\; v=g^{b_1}h^{b_2},\\
0,&\mb{$u$, or $v\notin \bK\lan g, h\ran$,}
\end{cases}\]
where $a_1,\, a_2,\, b_1, \,b_2\in\bZ$. From \cite[p.294]{Rad}, we know that $H_4$ is quasitriangular,
whose universal $R$-matrix is given by $R=\tfrac{1}{2}(1\ot1+g\ot1+1\ot g-g\ot g)$. Hence,
$H_4\ot H_4$ is also quasitriangular.

The other twisted we concern is the Drinfel'd double $D(H_4)$, whose Green ring has been computed in \cite{chen1}.
Here we point out that it is also $2$-cocycle twist-equivalent to $H_4\ot H_4$, thus to $\bar{\ma}$.
Now we recall the presentation of $D(H_4)$ given in \cite{chen1}.
It has generators $g,\, h,\, x,\, y$, which subject to the following relations:
\[\begin{array}{l}
g^2=h^2=1,\quad x^2=y^2=0,\quad gx=-xg,\quad gy=-yg,\\
hx=-xh,\quad hy=-yh,\quad xy+yx=1-gh.
\end{array}\]
Meanwhile, we define the comultiplication of $D(H_4)$ to be the same as $\bar{\ma}$, opposite to the one used in \cite{chen1}.
Due to \cite{dt}, for two given Hopf algebras $A, B$, if there exists a skew pairing $\lan,\ran: B\ot A\rw\bK$, satisfying
\[\begin{array}{l}
\lan bb',a\ran=\lan b\ot b',\De(a)\ran,\quad \lan b,aa'\ran=\lan\De^\bs{op}(b),a\ot a'\ran,\\
\lan 1,a\ran=\ve(a),\quad \lan b,1\ran=\ve(b),
\end{array}\]
then one can define a $2$-cocycle $\si$ on $A\ot B$ by
\[\si(a\ot b, a'\ot b')=\ve(a)\lan b, a'\ran\ve(b),\quad a, \,a'\in A, \;b, \,b'\in B,\]
such that the twist $(A\ot B)^\si$ is again a Hopf algebra. Now as $\{b^ia^j\}_{0\leq i,j\leq1}$
forms a basis of $H_4$, we can define a skew pairing $\lan,\ran:H_4\ot H_4\rw\bK$ by
\[\lan a^i, a^j\ran=(-1)^{ij},\quad \lan b, b\ran=1,\quad \lan a^i, b\ran=\lan b, a^j\ran=0\]
with the corresponding $2$-cocycle $\si_2$.

\begin{prop}
There exists a Hopf algebra isomorphism $\phi$ between $(H_4\ot H_4)^{\si_2}$ and $D(H_4)$ defined by $\phi(b^ia^j\ot b^ka^l)=x^ig^jy^kh^l$, for $\,0\leq i,\, j,\, k,\, l\leq1$.
\end{prop}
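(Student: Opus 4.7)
The plan is to exhibit $\phi$ as a bialgebra isomorphism between two finite-dimensional Hopf algebras; compatibility with the antipodes will then be automatic. Since the twist $(H_4\ot H_4)^{\si_2}$ has the same underlying vector space and coalgebra structure as $H_4\ot H_4$, the map $\phi$ sends the PBW basis $\{b^ia^j\ot b^ka^l\}_{0\leq i,j,k,l\leq 1}$ bijectively onto the basis $\{x^ig^jy^kh^l\}$ of the $16$-dimensional algebra $D(H_4)$. Checking that $\phi$ is a coalgebra map then reduces to the four generators $b\ot 1,\,a\ot 1,\,1\ot b,\,1\ot a$; for instance
\[\De(b\ot 1)=(b\ot 1)\ot(1\ot 1)+(a\ot 1)\ot(b\ot 1)\;\longmapsto\;x\ot 1+g\ot x=\De(x),\]
and the remaining three cases are just as direct, while counit compatibility is immediate from $\ve(a)=\ve(1)=1$ and $\ve(b)=0$.

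For the algebra structure, I would verify that the images $g,h,x,y$ satisfy the defining relations of $D(H_4)$ inside $(H_4\ot H_4)^{\si_2}$, using the twist formula
\[u\cdot_{\si_2}v=\si_2(u_{(1)},v_{(1)})\,u_{(2)}v_{(2)}\,\si_2^{-1}(u_{(3)},v_{(3)})\]
together with $\si_2(a\ot b,a'\ot b')=\ve(a)\lan b,a'\ran\ve(b')$. The relations that live inside a single copy of $H_4$, namely $g^2=h^2=1$, $x^2=y^2=0$, $gx=-xg$ and $hy=-yh$, descend trivially because whenever both factors of the twisted product lie in $H_4\ot 1$ (or both in $1\ot H_4$), the vanishings $\ve(b)=0$ and $\lan b,a^j\ran=\lan a^i,b\ran=0$ kill all but the identity contribution, so the twisted product agrees with the usual tensor-product multiplication. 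The cross anti-commutations $gy=-yg$ and $hx=-xh$ follow from a short computation of the same flavour, since one factor is group-like and the other is $(1,g)$-primitive or $(1,h)$-primitive.

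The heart of the proof, and the main obstacle, is the mixed relation $xy+yx=1-gh$, which must be extracted entirely from the twist: the preimages $(b\ot 1)$ and $(1\ot b)$ already commute in $H_4\ot H_4$ before twisting, so there is nothing to exploit without $\si_2$. I would compute $(b\ot 1)\cdot_{\si_2}(1\ot b)$ and $(1\ot b)\cdot_{\si_2}(b\ot 1)$ by expanding
\[\De^{(2)}(b\ot 1)=(b\ot 1,1\ot 1,1\ot 1)+(a\ot 1,b\ot 1,1\ot 1)+(a\ot 1,a\ot 1,b\ot 1)\]
and the analogue for $1\ot b$, then evaluating $\si_2$ and $\si_2^{-1}$ on the resulting nine pairs for each product. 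The key inputs are the values $\lan a,a\ran=-1$, $\lan b,b\ran=1$, together with $S(b)=-ab$ and $\lan b,S(b)\ran=1$, which control the surviving terms in $\si_2^{-1}$. Careful bookkeeping then yields $(b\ot 1)\cdot_{\si_2}(1\ot b)=b\ot b$ and $(1\ot b)\cdot_{\si_2}(b\ot 1)=(1\ot 1)-(b\ot b)-(a\ot a)$, whose sum is $(1\ot 1)-(a\ot a)=\phi^{-1}(1-gh)$. Combined with the preceding steps, this shows that $\phi$ is a bialgebra, hence a Hopf algebra, isomorphism.
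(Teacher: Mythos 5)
Your proposal follows essentially the same route as the paper: $\phi$ is a vector-space bijection that is visibly a coalgebra map (the twist does not touch the coalgebra), so the whole content is the algebra-map check on generators, and since relations living in a single tensor factor are preserved trivially (as you observe, $\si_2$ collapses to $\ve\ot\ve$ there), the work reduces to the three "mixed'' computations. Those are exactly the identities the paper records: $(1\ot b)\cdot_{\si_2}(b\ot 1)=1\ot 1-b\ot b-a\ot a$, $(1\ot b)\cdot_{\si_2}(a\ot 1)=-a\ot b$, $(1\ot a)\cdot_{\si_2}(b\ot 1)=-b\ot a$, giving $yx=1-gh-xy$, $yg=-gy$, $hx=-xh$ respectively, and your two computations for $xy$ and $yx$ add up to the first one. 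Your appeal to "bialgebra map implies Hopf map'' to avoid checking the antipode is standard and fine.

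One small correction in the bookkeeping: the convolution inverse of $\si_2$ is $\si_2^{-1}(p\ot q,\,p'\ot q')=\ve(p)\lan S(q),p'\ran\ve(q')$, so the value that survives in the last surviving summand of $(1\ot b)\cdot_{\si_2}(b\ot 1)$ is $\lan S(b),b\ran=\lan -ab,b\ran=1$ (equivalently $\lan b,S^{-1}(b)\ran=1$), not $\lan b,S(b)\ran$, which actually equals $-1$. The sign you ultimately obtained for $-a\ot a$ is correct, so this is only a mislabeling of the intermediate pairing, not a gap in the argument.
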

\begin{proof}
It is straightforward to check that
\[\begin{array}{l}
\phi((1\ot b)\cdot_{\si_2}(b\ot 1))=1-gh-xy=yx=\phi(1\ot b)\phi(b\ot 1),\\
\phi((1\ot b)\cdot_{\si_2}(a\ot 1))=-gy=yg=\phi(1\ot b)\phi(a\ot 1),\\
\phi((1\ot a)\cdot_{\si_2}(b\ot 1))=-xh=hx=\phi(1\ot a)\phi(b\ot 1).
\end{array}\]
This is a Hopf algebra isomorphism.\end{proof}

\subsection{The Green ring of $D(H_4)$}
Motivated by the construction of a complete set of primitive orthogonal idempotents of $\bar{U}_{r,s}(sl_2)$ in \cite{TH},
we figure out those of $D(H_4)$.
\begin{prop}
A complete set of primitive orthogonal idempotents of $D(H_4)$ is given by
$\{e_1,\dots,e_6\}$, where
\[\begin{array}{l}
e_1=\tfrac{1}{4}(1+g+h+gh),\qquad e_2=\tfrac{1}{4}(1-g-h+gh),\\
e_3=\tfrac{1}{8}xy(1+g-h-gh),\quad e_4=\tfrac{1}{8}(2-xy)(1+g-h-gh),\\
e_5=\tfrac{1}{8}xy(1-g+h-gh),\quad e_6=\tfrac{1}{8}(2-xy)(1-g+h-gh).
\end{array}\]
\end{prop}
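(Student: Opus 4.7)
The plan is to verify in turn completeness $\sum_{i=1}^{6} e_i = 1$, pairwise orthogonality, idempotency, and primitivity of each $e_i$. The whole verification rests on two elementary identities inside $D(H_4)$. First, the anticommutation relations $gx = -xg$, $gy = -yg$, $hx = -xh$, $hy = -yh$ force
\[g(xy) = (xy)g, \quad h(xy) = (xy)h,\]
so $xy$ commutes with the whole group subalgebra $\bK[\bZ_2\times\bZ_2]$. Second, using $yx = 1 - gh - xy$, $xgh = ghx$, and $x^2 = y^2 = 0$,
\[(xy)^2 = x(yx)y = x(1-gh-xy)y = xy - ghxy = (1-gh)xy,\]
and consequently $xy(2-xy) = (1+gh)xy$.

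Completeness and orthogonality then follow quickly. The four elements $\tfrac14(1{+}g{+}h{+}gh),\tfrac14(1{-}g{-}h{+}gh),\tfrac14(1{+}g{-}h{-}gh),\tfrac14(1{-}g{+}h{-}gh)$ are the primitive orthogonal idempotents of $\bK[\bZ_2\times\bZ_2]$ summing to $1$; since $e_1, e_2$ coincide with the first two and $e_3{+}e_4, e_5{+}e_6$ with the last two, the sum is $1$. Orthogonality across the groupings $\{1,2\}$, $\{3,4\}$, $\{5,6\}$ reduces to orthogonality of the group idempotents once $xy$ is commuted past $g, h$. The representative within-grouping case is
\[e_3e_4 = \tfrac{1}{64}xy(2-xy)(1{+}g{-}h{-}gh)^2 = \tfrac{1}{16}(1{+}gh)xy(1{+}g{-}h{-}gh) = 0,\]
since $(1+gh)(1+g-h-gh) = 0$. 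Idempotency is the same style of expansion: using $(1+g-h-gh)^2 = 4(1+g-h-gh)$ together with $(1-gh)(1+g-h-gh) = 2(1+g-h-gh)$ gives
\[e_3^2 = \tfrac{1}{64}(xy)^2(1{+}g{-}h{-}gh)^2 = \tfrac{1}{16}(1{-}gh)(1{+}g{-}h{-}gh)xy = \tfrac{1}{8}xy(1{+}g{-}h{-}gh) = e_3,\]
and the parallel identity $(2-xy)^2(1{+}g{-}h{-}gh) = 2(2-xy)(1{+}g{-}h{-}gh)$ yields $e_4^2 = e_4$.

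Primitivity is the main step. My approach is to compute each left module $D(H_4)e_i$ explicitly and inspect its submodule lattice. For $i\in\{1,2\}$, one has $ghe_i = e_i$, so $(1-gh)e_i = 0$ and hence $yxe_i = -xye_i$; a direct check yields a basis $\{e_i,\, xe_i,\, ye_i,\, xye_i\}$ of $D(H_4)e_i$ of dimension $4$. The eigenvectors $e_i, xye_i$ of $(g,h)$ share one character while $xe_i, ye_i$ share the opposite; since $xye_i$ is killed by both $x$ and $y$, while every other eigenvector maps nontrivially into $\bK xye_i$ under $x$ or $y$, the line $\bK xye_i$ is the unique simple submodule, forcing indecomposability (equivalently, $e_iD(H_4)e_i = \bK\{e_i, xye_i\} \cong \bK[t]/(t^2)$ is local). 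For $i\in\{3,4,5,6\}$, setting $f$ to be the appropriate group idempotent $\tfrac14(1{+}g{-}h{-}gh)$ or $\tfrac14(1{-}g{+}h{-}gh)$, one computes that $D(H_4)e_3 = \bK\{xyf, yf\}$ is two-dimensional, with $x\cdot yf = xyf$ and $y\cdot xyf = 2yf$ (the latter via $yxy = y - ghy$ together with $ghyf = -yf$). The two basis vectors have distinct $(g,h)$-characters and are swapped by $x$ and $y$, so the module admits no proper nontrivial submodule and is actually simple, a fortiori indecomposable. The dimension tally $4+4+2+2+2+2 = 16 = \dim D(H_4)$ confirms that the six idempotents exhaust the primitive decomposition.

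The main obstacle is the primitivity step, in which one must carefully track signs through the anticommutations of $x, y$ with $g, h$ when computing nontrivial module actions such as $yxy = y - ghy$; everything else is organized expansion driven by the two identities established at the outset.
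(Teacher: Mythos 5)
Your verification is correct, and since the paper dispatches this proposition with ``It is straightforward to check,'' you are simply supplying the omitted computation. The two structural identities you isolate at the start --- that $xy$ commutes with $g,h$, and that $(xy)^2 = (1-gh)xy$ --- do organize the algebraic manipulations cleanly, and your idempotency, orthogonality, and completeness checks are all sound.

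One sentence in the primitivity step for $i\in\{1,2\}$ is literally too strong: $e_i$ does not ``map nontrivially into $\bK\,xye_i$ under $x$ or $y$'' --- $x\cdot e_i = xe_i$ lies in the other weight space, and it takes a second application of $y$ to land in $\bK\,xye_i$. So the claim that every non-socle eigenvector falls into the socle in one step fails for the top vector $e_i$; the correct reasoning is that any nonzero submodule is $(g,h)$-graded, hence contains some eigenvector, and then one or two applications of $x,y$ produce a nonzero multiple of $xye_i$. Fortunately you also offer the parenthetical argument that $e_iD(H_4)e_i = \bK\{e_i,\,xye_i\}\cong\bK[t]/(t^2)$ is local (using $e_ixe_i = e_iye_i = 0$ and $(xye_i)^2 = (1-gh)xye_i = 0$), which is airtight and immediately yields primitivity without the submodule-lattice bookkeeping; this should be promoted to the main argument and the imprecise one-step formulation dropped. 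The treatment of $e_3,\dots,e_6$ as $2$-dimensional simple modules, together with the dimension count $4+4+2+2+2+2 = 16$, is a nice closing sanity check and matches the paper's later identification of $D(H_4)e_3 \cong D(H_4)e_4$ and $D(H_4)e_5 \cong D(H_4)e_6$ as $2$-dimensional simple projectives.
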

\begin{proof}
It is straightforward to check.
\end{proof}

\begin{rem}
In \cite{Ari}, Arike described a complete set of primitive orthogonal idempotents of $\bar{U}_q(sl_2)$ with $q$ a primitive $2p$-th root of unity, $p\geq2$.
Later, Kondo, Saito \cite{KS} decomposed the indecomposable decomposition of tensor products of modules over $\bar{U}_q(sl_2)$ based on Arike's work.
However, the truncation relations in their definition of $\bar{U}_q(sl_2)$ are distinct from $D(H_4)$ in \cite{chen1}.
From this point of view, the double $D(H_4)$ considered by Chen does not belong to those small quantum groups studied by Kondo and Saito in \cite{KS}.
Hence, we prefer to use the version (close to ours) of restricted two-parameter quantum groups in \cite{TH}.
\end{rem}

Note that $D(H_4)$ is not basic, as $D(H_4)e_3\cong D(H_4)e_4,~D(H_4)e_5\cong D(H_4)e_6$.
Each of them is a two dimensional simple projective module. We denote $P_+=D(H_4)e_3,~P_-=D(H_4)e_5$,
whose diagrams are respectively
\[\xymatrix@=2em{\stt{e_3}\ar@{.>}@/^/[d]\\
\stt{ye_3}\ar@{->}@/^/[u]^-{2}},\quad\xymatrix@=2em{\stt{e_5}\ar@{.>}@/^/[d]\\
\stt{ye_5}\ar@{->}@/^/[u]^-{2}}.\]
Meanwhile, the radical $J(D(H_4))=(x(1+gh),y(1+gh))$.
Hence, the Gabriel quiver of the basic subalgebra $D(H_4)e_1\op D(H_4)e_2\op e_3D(H_4)e_3\op e_5D(H_4)e_5$ of $D(H_4)$ looks like
\[\xymatrix@=2em{\xy 0;/r.1pc/:
(0,0)*{\circ};(-6,0)*{\stt{e_3}};\endxy&
\xy 0;/r.1pc/:(0,0)*{\circ};
(-6,0)*{\stt{e_1}};\endxy
\ar@{->}@/^/[r]^-{\be_1}\ar@{->}@/^{1.7em}/[r]^-{\al_1}&
\xy0;/r.1pc/:(0,0)*{\circ};(6,0)*{\stt{e_2}};\endxy
\ar@{->}@/^/[l]^-{\be_2}\ar@{->}@/^{1.7em}/[l]^-{\al_2}
&\xy 0;/r.1pc/:(0,0)*{\circ};
(6,0)*{\stt{e_5}};\endxy
}\]
Note that $D(H_4)$-mod has three blocks, the modules from the block $D(H_4)e_1\op D(H_4)e_2$-mod are
the same as those from $\bar{\ma}f_1$-mod, on which the action of $1-gh$ vanishes.
So all the tensor product decompositions from $\bar{\ma}f_1$-mod
in Theorem \ref{dec} work on $D(H_4)e_1\op D(H_4)e_2$-mod as well.

Now due to Corollary \ref{green}, the Green ring of $D(H_4)$ can be obtained directly.
One should compare our list with the result in \cite{chen1},
which is described in a different language, using syzygy, cosyzygy functors, etc.

\begin{cor}\label{gre2}
The Green ring $r(D(H_4))$ of $D(H_4)$ is a commutative ring generated by
\[\Big\{\,[S(-,-)], \,[P], \,[P_+], \,[M(1)], \,[W(1)]\,\Big\}\cup\Big\{\,[C(r,\eta)],\, [N(r)],\, [N'(r)]\,\Big \}_{r\in \mathbb{Z}^+,\eta\in\bK^\times},\]
subject to the following relations
\[\begin{array}{l}
S^2=1,\quad P_+P=2(1{+}S)P_+,\quad P_+^2=SP,\\
MP_+=(1+2S)P_+,\quad WP_+=(2{+}S)P_+,\quad C_{r,\eta}P_+=N_rP_+=N'_rP_+=r(1{+}S)P_+,\\
MW=2P+S,\quad MC_{r,\eta}=rP+SC_{r,\eta},\quad MN_r=rP+SN_r,\quad MN'_r=rP+SN'_r,\\
WC_{r,\eta}=rP+C_{r,\eta},\quad WN_r=rP+N_r,\quad WN'_r=rP+N'_r,\\
C_{r,\eta}C_{s,\ga}=\begin{cases}
   rsP,&\eta\neq\ga,\\
   (rs-\mb{min}\{r,s\})P+(1{+}S)C_{\bs{min}\{r,s\},\eta},&\eta=\ga,
 \end{cases}\\
N_rN_s=(rs-\mbox{min}(r,s))P+(1{+}S)N_{\bs{min}(r,s)},
\quad N'_rN'_s=(rs-\mbox{min}(r,s))P+(1{+}S)N'_{\bs{min}(r,s)},\\
C_{r,\eta}N_s=C_{r,\eta}N'_s=N_rN'_s=rsP,
\end{array}\]
where $S,\, P,\, M,\, W,\, C_{r,\eta},\, N_r,\, N'_r$ are still for $[S(-,-)]$, $[P(+,+)]$, $[M(1)]$, $[W(1)]$, $[C(r,\eta)]$, $[N(r)]$, $[N'(r)]$, successively.
\end{cor}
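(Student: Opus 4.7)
The strategy is to exploit the block decomposition of $D(H_4)$-$\mb{mod}$ and build on Corollary \ref{green}. As recorded in the paragraph preceding the statement, $D(H_4)$-$\mb{mod}$ has three blocks: the \emph{big block} $D(H_4)(e_1{+}e_2)$-$\mb{mod}$, on which $1{-}gh$ vanishes and which therefore coincides with $\bar{\ma}f_1$-$\mb{mod}$; and two semisimple blocks, each containing a single $2$-dimensional simple projective, $P_+=D(H_4)e_3$ and $P_-=D(H_4)e_5$. Since $\De(1{-}gh)=1\ot1-gh\ot gh$ vanishes on tensors of modules in the big block, that block is closed under $\ot$, and every tensor product internal to it is already computed by the $f_1$-case of Theorem \ref{dec}. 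This supplies every identity in the corollary that does not involve $P_+$.

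For the products involving $P_+$, a direct check gives $ge_3=e_3$ and $he_3=-e_3$, so $P_+$ carries $\lan g,h\ran$-weights $(+,-),(-,+)$; twisting by $S(-,-)$ interchanges these weights but lands in the other semisimple block, hence $P_-=SP_+$ in $r(D(H_4))$. The remaining identities can then all be deduced algebraically from the single base identity $P_+^2=SP$ together with the known relations in $r(\bar{\ma}')$. Indeed, once $P_+^2=SP$ is known, writing $P_+P=aP_++bP_-$ and multiplying by $P_+$ yields $P_+^2\cdot P=SP\cdot P=2(1{+}S)SP=2(P{+}SP)$ on one side, and $aP_+^2+bP_+P_-=aSP+bP$ on the other (here $P_+P_-=P_+\cdot SP_+=S\cdot P_+^2=P$), forcing $a=b=2$, i.e.\ $P_+P=2(1{+}S)P_+$. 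Using the analogous identities $MP=(1{+}2S)P$, $WP=(2{+}S)P$, $C_{r,\eta}P=N_rP=N'_rP=r(1{+}S)P$ from Corollary \ref{green}, the same template immediately yields $MP_+=(1{+}2S)P_+$, $WP_+=(2{+}S)P_+$, and $C_{r,\eta}P_+=N_rP_+=N'_rP_+=r(1{+}S)P_+$.

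The entire proof therefore reduces to verifying $P_+^2=SP$, which is the main obstacle: the $\lan g,h\ran$-weight spectrum of $P_+^2$ (namely two copies each of $(+,+)$ and $(-,-)$) coincides with that of both $P(+,+)$ and $P(-,-)$, so one needs a finer invariant. The most direct route is an explicit calculation on the basis $\{e_3\ot e_3,\,e_3\ot ye_3,\,ye_3\ot e_3,\,ye_3\ot ye_3\}$: applying $\De(x)=x\ot1+g\ot x$, $\De(y)=y\ot1+h\ot y$ together with $x(ye_3)=2e_3$ and $xe_3=y(ye_3)=0$ shows that $e_3\ot ye_3+ye_3\ot e_3$ lies outside the image of both $x$ and $y$, generates the whole module, and carries weight $(-,-)$; hence it is the unique top of $P_+^2$ and $P_+^2\cong P(-,-)=SP$. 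An equivalent derivation computes $P_+^*\cong P_-$ via a brief antipode chase ($S(x)=xg$, $S(y)=yh$) and then invokes Frobenius reciprocity to obtain $\mb{Hom}(P_+^2,S(-,-))=\bK$ and $\mb{Hom}(P_+^2,S(+,+))=0$. With this anchor fixed, no technique beyond the block analysis and the Green-ring manipulations above is required, and the corollary follows.
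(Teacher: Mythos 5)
Your proposal is correct, and it actually supplies a derivation where the paper gives essentially none: after the block discussion, the paper simply asserts that ``due to Corollary~\ref{green}, the Green ring of $D(H_4)$ can be obtained directly,'' leaving the relations involving $P_+$ unjustified. Your route is cleaner and more explicit: (i) identify the big block $D(H_4)(e_1{+}e_2)$ with $\bar{\ma}f_1$, observe it is closed under $\ot$ (via $\De(1{-}gh)$), and pull over every identity of Corollary~\ref{green} not involving $P_+$; (ii) prove the single anchor identity $P_+^2\cong SP$ by the $4$-dimensional weight calculation you describe, and identify $P_-\cong S\ot P_+$; (iii) deduce $P_+P$, $MP_+$, $WP_+$, $C_{r,\eta}P_+$, $N_rP_+$, $N'_rP_+$ by associating with an extra factor of $P_+$ and reading off coefficients against the linearly independent basis elements $P$ and $SP$. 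This associativity trick is precisely the bootstrapping technique the authors use repeatedly in the proof of Theorem~\ref{dec}, so it is very much in the spirit of the paper.

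Two small points deserve tightening. First, your justification that $S\ot P_+\cong P_-$ reads ``twisting by $S(-,-)$ interchanges these weights but lands in the other semisimple block''; note that $P_+$ and $P_-$ carry the \emph{same} weight set $\{(+,-),(-,+)\}$, so weight interchange alone does not separate them. What distinguishes them is which weight space carries the socle of the $x$-action (in $P_+$, $x$ kills $e_3$ of weight $(+,-)$ and maps $ye_3$ of weight $(-,+)$ to $2e_3$; in $P_-$ it is reversed), and a one-line module check confirms $S\ot P_+\cong P_-$. Second, when you write $P_+P=aP_++bP_-$ (and likewise $MP_+$, $WP_+$, etc.), a one-sentence justification is needed that no $P$ or $SP$ summand appears; the quickest is that the central group-like $gh$ acts by $-1$ on $P_\pm$ and by $+1$ on the big block, and $\De(gh)=gh\ot gh$ forces $gh$ to act by $-1$ on $X\ot P_+$ for any big-block $X$, so only $P_\pm$ can occur. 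Once these are stated, your proof is complete and in fact more rigorous than what the paper records.
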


The projective class algebra $p(D(H_4))=\bK[S,P_+]/(S^2-1,P_+^3-2(1{+}S)P_+)$.
The Jacobson radical $J(p(D(H_4)))=((1{-}S)P_+)$,
thus \[p(D(H_4))/J(p(D(H_4)))=\bK[S,P_+]/(S^2-1,P_+^3-4P_+,(1{-}S)P_+)
\cong\bK^4,\]
with the orthogonal idempotents $(1-S)/2,\,P_+(P_+\pm2)/8,\,\tfrac{1+S}{2}-\tfrac{P_+^2}{4}$.
Meanwhile, using Theorem \ref{rad} we get that
\begin{theorem}
The Jacobson radical
\[J(R(D(H_4)))=\lb(1{-}S)P_+,\,(1{-}S)N_r,\, (1{-}S)N'_r,\, (1{-}S)C_{r,\eta}\;\big|\;r\in\bZ^+,\eta\in\bK^\times\rb.\]
\end{theorem}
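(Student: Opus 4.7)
The plan is to mirror the proof of Theorem~\ref{rad} applied to $R(D(H_4))$. Set
\[\m{J}:=\lb(1{-}S)P_+,\,(1{-}S)N_r,\,(1{-}S)N'_r,\,(1{-}S)C_{r,\eta}\;\big|\;r\in\bZ^+,\eta\in\bK^\times\rb.\]
I will establish $J(R(D(H_4)))=\m{J}$ in two stages: first show every listed generator of $\m{J}$ is nilpotent, so that $\m{J}\subseteq J(R(D(H_4)))$, and then show the quotient $A:=R(D(H_4))/\m{J}$ is Jacobson semisimple.

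For nilpotency, all computations reduce to $(1{-}S)(1{+}S)=0$ combined with the multiplication rules in Corollary~\ref{gre2}. The genuinely new case relative to Theorem~\ref{rad} is $(1{-}S)P_+$: using $P_+^2=SP$ and $P_+P=2(1{+}S)P_+$, one gets $((1{-}S)P_+)^2=-2(1{-}S)P$ and hence $((1{-}S)P_+)^3=0$. The generators $(1{-}S)N_r,\,(1{-}S)N'_r,\,(1{-}S)C_{r,\eta}$ each square to a scalar multiple of $(1{-}S)P$, which in turn squares to $0$ because $((1{-}S)P)^2=4(1{-}S)(1{+}S)P=0$; these are verbatim the corresponding arguments in the proof of Theorem~\ref{rad}.

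The substantive step is to verify that $A$ is Jacobson semisimple. I would split $A$ through the central idempotents $e=(1{+}S)/2$ and $f=(1{-}S)/2$ into $A_e\op A_f$. On $A_f$ every projective-like class vanishes, while $fM\cdot fW=f(2P+S)=-f$ forces $fM$ to be invertible with inverse $-fW$, hence $A_f\cong\bK[fM^{\pm1}]$ is a Laurent polynomial ring and so Jacobson semisimple. On $A_e$, the relations $SP=P$ and $SP_+=P_+$ (both implied by $\m{J}$) together with $P_+^2=P$ and $P^2=4P$ exhibit $P/4$ as a central idempotent splitting $A_e=(1{-}P/4)A_e\op(P/4)A_e$. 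The first summand is annihilated by both $P$ and $P_+$ and is therefore naturally isomorphic to the $e$-component of $\mb{St}(\bar{\ma}')/J_0$ appearing in the proof of Theorem~\ref{rad}, inheriting its Jacobson semisimplicity; the second summand carries the relation $P_+^2=4$, making it the $2$-dimensional algebra $\bK\times\bK$ with orthogonal idempotents $(P\pm 2P_+)/8$.

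The main obstacle is to verify the structural claim about the last summand at the level of $R(D(H_4))$ itself. I would lift the idempotents $(P\pm 2P_+)/8$ to
\[q_\pm:=\tfrac{1}{16}(1{+}S)(P_+^2\pm 2P_+)=\tfrac{1}{16}\lb(1{+}S)P\pm 2(1{+}S)P_+\rb\]
and check by direct expansion that $q_\pm^2=q_\pm$, $q_+q_-=0$, and $xq_\pm\in\bK q_\pm$ for each generator $x\in\{S,M,W,P,P_+,N_r,N'_r,C_{r,\eta}\}$ of $R(D(H_4))$. All of these identities reduce to $(1{+}S)^2=2(1{+}S)$ combined with the scalar multiplication rules $MP_+=(1{+}2S)P_+$, $WP_+=(2{+}S)P_+$, $C_{r,\eta}P_+=N_rP_+=N'_rP_+=r(1{+}S)P_+$, $P_+P=2(1{+}S)P_+$, $P_+^2=SP$ recorded in Corollary~\ref{gre2}. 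Once $(q_\pm)=\bK q_\pm$ is established, the closing argument of Theorem~\ref{rad}---invoking Lemma~\ref{jac} to conclude $(q_\pm)\cap J(R(D(H_4)))=0$---completes the proof that $J(A)=0$ and hence $J(R(D(H_4)))=\m{J}$.
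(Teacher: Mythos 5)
Your proof is correct and follows essentially the same approach as the paper's. Both arguments hinge on (i) the identification $\mb{St}(D(H_4))=\mb{St}(\bar{\ma}')$ so that the Jacobson semisimplicity established in Theorem~\ref{rad} can be imported, and (ii) the orthogonal idempotents $q_\pm=\tfrac{1}{16}(1{+}S)(P_+{\pm}2)P_+$ whose $\bK$-span is the ideal $((1{+}S)P_+)$; the paper caps $J(R(D(H_4)))$ from above by $\pi^{-1}(J(\mb{St}))$ and then removes this ideal, while you instead verify directly (via the central idempotents $(1{\pm}S)/2$ and $P/4$) that $R(D(H_4))/\m J$ is Jacobson semisimple, which is a more spelled-out but equivalent organization of the same facts.
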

\begin{proof}
First it is clear that $\mb{St}(D(H_4))=\mb{St}(\bar{\ma}')$. From Theorem \ref{rad}, we know that
\[J(R(D(H_4)))\subset\lb P_+,\,SP_+,\,(1{-}S)N_r,\, (1{-}S)N'_r,\,(1{-}S)C_{r,\eta}
\;\big|\;r\in\bZ^+,\eta\in\bK^\times\rb.\]

In order to derive $J(R(D(H_4)))$, one only needs to see that
$$((1{+}S)P_+)=\mb{span}_\bK\Big\{\,(1{+}S)(P_+{\pm}2)P_+/16\,\Big\}.$$
On the other hand, $(1{+}S)(P_+{\pm}2)P_+/16$ are orthogonal idempotents, thus $((1{+}S)P_+)\cap J(R(D(H_4)))=0$.
\end{proof}

\subsection{The Green ring of $\mh$} Write $\mh:=H_4\ot H_4$ for short.
The rest of the paper will be devoted to computing the Green ring of
$\mh$. We use the following presentation of $\mh$. It has generators
$g, h, x, y$ as well, but subject to the following different
relations,
\begin{equation}\label{rel}
\begin{array}{l}
gh=hg,\quad g^2=h^2=1,\\
gx=-xg,\quad gy=yg,\quad hx=xh,\quad hy=-yh,\\
x^2=y^2=0,\quad xy=yx.
\end{array}
\end{equation}
The comultiplication $\De$ is defined by
\[\De(g)=g\ot g,\quad \De(h)=h\ot h,\quad \De(x)=x\ot 1+g\ot x,\quad \De(y)=y\ot 1+h\ot y.\]
$\mh$ also has $4$ orthogonal primitive idempotents \[e_1=\tfrac{1}{4}(1+g+h+gh),e_2=\tfrac{1}{4}(1+g-h-gh),
e_3=\tfrac{1}{4}(1-g+h-gh),e_4=\tfrac{1}{4}(1-g-h+gh),\]
but the unit $1$ is the unique central idempotent of $\mh$.

\begin{lem}
(1) There exist two signs $s_1,s_2\in\{+,-\}$ such that $ge_i=s_1e_i,\; he_i=s_2e_i$ with $(s_1,s_2)=(+,+),\, (+,-),\, (-,+),\, (--)$ for $i=1, 2, 3, 4$, successively.

(2) $xe_1=e_3x,\quad xe_3=e_1x,\quad ye_1=e_2y,\quad ye_2=e_1y,\quad xe_2=e_4x,\quad xe_4=e_2x$,

\hskip0.6cm $ye_4=e_3y,\quad ye_3=e_4y$.
\end{lem}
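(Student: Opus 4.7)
The plan is to verify both parts of the lemma by direct computation, parallel to the proof of Lemma 2.1 for $\bar{\ma}$, but with the key observation that the commutation relations in $\mh$ are more refined: $x$ anticommutes with $g$ but commutes with $h$, while $y$ commutes with $g$ but anticommutes with $h$. This asymmetry is what ultimately produces the pattern in part (2).

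For part (1), I would simply apply $g$ and $h$ to the explicit expressions for $e_1,\dots,e_4$, using only $g^2=h^2=1$ and $gh=hg$. For instance,
\[
g\cdot e_1 = \tfrac14(g+g^2+gh+g^2h)=\tfrac14(1+g+h+gh)=e_1,\quad h\cdot e_1=e_1,
\]
and the analogous computations for $e_2,e_3,e_4$ recover exactly the sign pattern $(+,+),(+,-),(-,+),(-,-)$. The four $e_i$ are thus the common eigenvectors of the commuting pair $g,h$ on $\bK\langle g,h\rangle\cong \bK[\bZ_2\times\bZ_2]$.

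For part (2), the cleanest approach is to push $x$ (respectively $y$) past each monomial in $g,h$ using $gx=-xg$, $hx=xh$ (respectively $gy=yg$, $hy=-yh$) and then recognize the outcome as another $e_j$. For example,
\[
x\cdot e_1=\tfrac14(x+xg+xh+xgh)=\tfrac14(x-gx+hx-ghx)=\tfrac14(1-g+h-gh)\,x=e_3\,x,
\]
and similarly $y\cdot e_1=\tfrac14(y+gy-hy-ghy)=e_2 y$. The conceptual content is that multiplying by $x$ on the left of $e_i$ flips only the $g$-eigenvalue $s_1$, whereas multiplying by $y$ flips only the $h$-eigenvalue $s_2$; combined with the labelling from part (1), this immediately predicts the pairings $e_1\leftrightarrow e_3$, $e_2\leftrightarrow e_4$ under $x$ and $e_1\leftrightarrow e_2$, $e_3\leftrightarrow e_4$ under $y$, which is exactly the list in the statement.

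There is no genuine obstacle: the whole proof is mechanical, and the only thing one must be careful about is to track correctly which of $g,h$ each of $x,y$ anticommutes with, since this is precisely where $\mh$ differs from $\bar{\ma}$ (where both $x,y$ anticommuted with both $g,h$). Once that bookkeeping is set up, the eight identities in (2) follow by four pairs of one-line computations analogous to the $x\cdot e_1$ and $y\cdot e_1$ examples above.
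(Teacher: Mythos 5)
Your proof is correct and fills in exactly the routine computation the paper leaves implicit (the analogous Lemma 2.1 for $\bar{\ma}$ is dispatched by ``It is straightforward to check,'' and this one for $\mh$ is stated without a proof block at all). The sample computations you display for $xe_1$ and $ye_1$ are right, and the remaining six identities follow by the same bookkeeping. Your conceptual gloss---that left-multiplication by $x$ flips only the $g$-eigenvalue $s_1$ while left-multiplication by $y$ flips only the $s_2$-eigenvalue of $h$, because $x$ anticommutes with $g$ but commutes with $h$ and vice versa for $y$---is a clean way to see the whole of part (2) at once from part (1), and correctly highlights precisely where $\mh$ departs from $\bar{\ma}$.
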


Similar to $\bar{\ma}$, we denote by $S(s_1,s_2)$ the one-dimensional simple module $\bK$ with $g\cdot1=s_11,h\cdot1=s_21,x\cdot1=y\cdot1=0$, and $P(s_1,s_2)$ as the projective cover of $S(s_1,s_2)$.

As $P(s_1,s_2)$ are non-isomorphic to each other with respect to the signs $(s_1,\, s_2)$, $\mh$ is also a basic algebra over $\bK$.
On the other hand, the radical $J(\mh)=(x,y)$, thus from the above lemma, we know that the Gabriel quiver $Q_{\mh}$ of $\mh$ looks like:
\[\xymatrix@=1.7em{&
\xy 0;/r.1pc/:
(0,0)*{\circ};(0,6)*{\stt{e_1}};\endxy\ar@{->}@/^/[rd]^-{\be_1}\ar@{->}@/^/[ld]^-{\al_1}&\\
\xy 0;/r.1pc/:
(0,0)*{\circ};(-6,0)*{\stt{e_3}};\endxy\ar@{->}@/^/[ru]^-{\al_3}\ar@{->}@/^/[rd]^-{\be_3}
&&\xy 0;/r.1pc/:
(0,0)*{\circ};(6,0)*{\stt{e_2}};\endxy\ar@{->}@/^/[lu]^-{\be_2}\ar@{->}@/^/[ld]^-{\al_2}\\
&\xy 0;/r.1pc/:
(0,0)*{\circ};(0,-6)*{\stt{e_4}};\endxy\ar@{->}@/^/[ru]^-{\al_4}\ar@{->}@/^/[lu]^-{\be_4}&}
,\]
where for $i=1, 2, 3, 4$, the arrows $\al_i,\, \be_i$ correspond to $xe_i,\, ye_i$, respectively.
The admissible ideal $I$ has the following relations:
\[\begin{array}{l}
\al_1\al_3=\al_3\al_1=0,\quad \al_2\al_4=\al_4\al_2=0,\quad
\be_1\be_2=\be_2\be_1=0,\quad \be_3\be_4=\be_4\be_3=0,\\
\be_3\al_1-\al_2\be_1=\be_4\al_2-\al_1\be_2=
\be_1\al_3-\al_4\be_3=\be_2\al_4-\al_3\be_4=0.
\end{array}
\]
Hence, $\mh$ has a unique block.

From the quiver $Q_\mh$, we know that $\mh$ is also a special biserial algebra.
We describe all its indecomposable modules of Loewy length $2$ as follows.
For any $r,s\in\mathbb{Z}^+,\eta,\ga\in \bK^\times$, we first define the following string modules
\[\begin{array}{l}
M(r)=\begin{cases}
M\lb(\al_1\be_1^{-1}\al_4\be_4^{-1})^{r/2}\rb,&\mb{if }r\mb{ is even},\\
M\lb(\al_1\be_1^{-1}\al_4\be_4^{-1})^{(r-1)/2}\al_1\be_1^{-1}\rb,&\mb{if }r\mb{ is odd},
\end{cases}\\
W(r)=\begin{cases}
M\lb(\be_1^{-1}\al_4\be_4^{-1}\al_1)^{r/2}\rb,&\mb{if }r\mb{ is even},\\
M\lb(\be_1^{-1}\al_4\be_4^{-1}\al_1)^{(r-1)/2}\be_1^{-1}\al_4\rb,&\mb{if }r\mb{ is odd},
\end{cases}\\
\mb{all of dimension }2r+1.\\
N(r)=\begin{cases}
M\lb(\al_1\be_1^{-1}\al_4\be_4^{-1})^{(r-2)/2}\al_1\be_1^{-1}\al_4\rb,&\mb{if }r\mb{ is even},\\
M\lb(\al_1\be_1^{-1}\al_4\be_4^{-1})^{(r-1)/2}\al_1\rb,&\mb{if }r\mb{ is odd},
\end{cases}\\
N'(r)=\begin{cases}
M\lb(\be_1^{-1}\al_4\be_4^{-1}\al_1)^{(r-2)/2}\be_1^{-1}\al_4\be_4^{-1}\rb,&\mb{if }r\mb{ is even},\\
M\lb(\be_1^{-1}\al_4\be_4^{-1}\al_1)^{(r-1)/2}\be_1^{-1}\rb,&\mb{if }r\mb{ is odd},
\end{cases}\\
\mb{all of dimension }2r.
\end{array}
\]
The diagrams of $M(r),\, N(r)$ look like:
\[\xymatrix@=1em{
&u_1\ar@{->}[ld] \ar@{.>}[rd]& &u_2\ar@{->}[ld] &\cdots&u_k\ar@{->}[ld] \ar@{.>}[rd]&&\cdots\\
v_1&&v_2&\cdots&v_k&&v_{k+1}&\cdots},\]
while the diagrams of $W(r),\, N'(r)$ look like:
\[\xymatrix@=1em{
u_1\ar@{.>}[rd]& &u_2\ar@{->}[ld] \ar@{.>}[rd] &\cdots&u_k\ar@{.>}[rd]&&u_{k+1}\ar@{->}[ld]&\cdots\\
&v_1&&v_2&\cdots&v_k&&\cdots},\]
where $u_{1}$ always has the sign $(+,+)$, and we again use the arrow $\xy0;/r.15pc/:{\ar@{->}(-5,0)*{};(5,0)*{}};\endxy$ (resp. $\xy0;/r.15pc/:{\ar@{.>}(-5,0)*{};(5,0)*{}};\endxy$)
to represent the action of $x$ (resp. $y$) on the modules.

Meanwhile, we define the following band modules
\[C(r,\eta)=M\lb\be_4\al_4^{-1}\be_1\al_1^{-1},r,\eta\rb\]
of dimension $4r$, whose diagrams are as follows:
\[\xymatrix@=2em{
\stt{u_1}\ar@{->}[d] \ar@{.>}[rd]|<<<\eta&\stt{u_2}\ar@{->}[d] \ar@{.>}[ld] &\stt{u_3}\ar@{->}[d]\ar@{.>}[rd]|<<<\eta\ar@{.>}[ld]
&\stt{u_4}\ar@{->}[d] \ar@{.>}[ld]&\cdots&\stt{u_{2r-1}}\ar@{->}[d] \ar@{.>}[rd]|<<<\eta&\stt{u_{2r}}\ar@{->}[d] \ar@{.>}[ld]\\
\stt{v_1}&\stt{v_2}&\stt{v_3}&\stt{v_4}&\cdots&\stt{v_{2r-1}}&\stt{v_{2r}}},\]
where $y\cdot u_{2i-1}=\eta v_{2i}+v_{2i-2},~i=1,\dots,r$ and $v_0=0$.
Moreover, $u_{2i-1},~u_{2i}$ have the signs $(-,-),~(+,+)$, respectively.

Let $S(+,+)\triangleq1,\,P(+,+)\triangleq P$ and denote $S(s_1,s_2)$ by $S_{s_1s_2}$ when $(s_1,s_2)\neq(+,+)$.
For $M\in\{P,M(r),W(r),N(r),N'(r),C(r,\eta)\}$, we define $M_{s_1s_2}=M\ot S(s_1,s_2)$ and may omit $++$ for short.
Now after adding the signs, we get all the indecomposable modules of Loewy length $2$,
 consisting of the string ones and the band ones.
Moreover, it should be seen that \[\big\{\,S_{s_1,s_2},P_{s_1,s_2},M(r)_{s_1,s_2},W(r)_{s_1,s_2},
N(r)_{s_1,s_2},N'(r)_{s_1,s_2},C(r,\eta),C(r,\eta)_{+,-}
\,|\,s_i=\pm\,\big\}_{r\in\bZ^+,\eta\in \bK^\times}\]
gives the complete list of indecomposable modules of $\mh$.

Now for any $t\in\bQ$, we define $\lc t\rc$ (resp. $\lf t\rf$) as the lower (resp. upper) bound of
integers bigger (resp. smaller) than $t$. For any $r,s\in\bZ^+$, we write $\mb{min}\{r,s\},~\mb{max}\{r,s\}$ as $\ull{r,s},~\ol{r,s}$,
respectively for short. Moreover, we define the parity \[|r|=\begin{cases}
-,&\mb{if }r\mb{ is odd},\\
+,&\mb{if }r\mb{ is even}.
\end{cases}\]

\begin{theorem}\label{dec1}
For any $r,s\in\bZ^+,\eta,\ga\in \bK^\times$, we have

(1) $P\ot P\cong P\op P_{--}\op P_{+-}\op P_{-+}$.

(2) $\begin{array}{l}
M(r)\ot P\cong
P^{\op\lc r/2\rc}\op P_{--}^{\op\lf r/2\rf}\op P_{+-}^{\op\lc r/2\rc}\op P_{-+}^{\op \lc(r+1)/2\rc}.\\
W(r)\ot P\cong P^{\op\lc(r+1)/2\rc}\op P_{--}^{\op\lc r/2\rc}\op P_{+-}^{\op\lc r/2\rc}\op P_{-+}^{\op\lf r/2\rf}.
\end{array}$

(3) $\begin{array}{l}
N(r)\ot P\cong P^{\op\lc r/2\rc}\op P_{--}^{\op\lf r/2\rf}\op P_{+-}^{\op\lf r/2\rf}\op P_{-+}^{\op\lc r/2\rc}.\\
N'(r)\ot P\cong P^{\op\lc r/2\rc}\op P_{--}^{\op\lf r/2\rf}\op P_{+-}^{\op\lc r/2\rc}\op P_{-+}^{\op\lf r/2\rf}.\\
C(r,\eta)\ot P\cong P^{\op r}\op P_{--}^{\op r}\op P_{+-}^{\op r}\op P_{-+}^{\op r},~C(r,\eta)\ot S_{--}\cong C(r,\eta).
\end{array}$

(4) $M(r)\ot M(s)\cong P^{\op\lc rs/2\rc}\op P_{--}^{\op\lf rs/2\rf}\op M(r{+}s)_{-+}$.

(5) $W(r)\ot W(s)\cong P^{\op\lf rs/2\rf}\op P_{--}^{\op\lc rs/2\rc}\op W(r{+}s)$.

(6) $M(r)\ot W(s)\cong\begin{cases}
P^{\op\lc r(s+1)/2\rc}\op P_{--}^{\op\lf r(s+1)/2\rf}\op W(s{-}r)_{|r-1|,|r|},& r<s,\\
P^{\op r(r+1)/2}\op P_{--}^{\op r(r+1)/2}\op S_{|r-1|,|r|},& r=s,\\
P^{\op\lc (r+1)s/2\rc}\op P_{--}^{\op\lf (r+1)s/2\rf}\op M(r{-}s)_{|s|,|s|},& r>s.
\end{cases}$

(7) $M(r)\ot C(s,\eta)\cong P^{\op rs}\op P_{--}^{\op rs}\op C(s,\eta)_{+-}$.

(8) $W(r)\ot C(s,\eta)\cong P^{\op rs}\op P_{--}^{\op rs}\op C(s,\eta)$.

(9) $\begin{array}{l}
M(r)\ot N(s)\cong P^{\op\lc rs/2\rc}\op P_{--}^{\op\lf rs/2\rf}\op N(s)_{-+}.\\
M(r)\ot N'(s)\cong P^{\op\lc rs/2\rc}\op P_{--}^{\op\lf rs/2\rf}\op N'(s)_{+-}.
\end{array}$

(10) $\begin{array}{l}
W(r)\ot N(s)\cong P^{\op\lc rs/2\rc}\op P_{--}^{\op\lf rs/2\rf}\op N(s)_{|r|,|r|}.\\
W(r)\ot N'(s)\cong P^{\op\lf rs/2\rf}\op P_{--}^{\op\lc rs/2\rc}\op N'(s).
\end{array}$.

(11) $C(r,\eta)\ot C(s,\ga)\cong
 \begin{cases}
   P^{\op 2rs}\ot P_{--}^{\op 2rs},&\eta\neq\ga,\\
   P^{\op(2rs-\ull{r,s})}\op P_{--}^{\op(2rs-\ull{r,s})}\op C(\ull{r,s},\eta)\op C(\ull{r,s},\eta)_{+-},&\eta=\ga.
 \end{cases}$

(12) $N(r)\ot N(s)\cong P^{\op\lc(rs-\ull{r,s})/2\rc}\op P_{--}^{\op\lf(rs-\ull{r,s})/2\rf}\op N(\ull{r,s})_{|\ol{r,s}-1|,|\ol{r,s}-1|}\op
N(\ull{r,s})_{-+}$.

(13) $N'(r)\ot N'(s)\cong P^{\op\lf(rs-\ull{r,s})/2\rf}\op P_{--}^{\op\lc(rs-\ull{r,s})/2\rc}\op N'(\ull{r,s})\op
N'(\ull{r,s})_{|\ol{r,s}-1|,|\ol{r,s}|}$.

(14) $\begin{array}{l}
N(r)\ot N'(s)\cong P^{\op\lc rs/2\rc}\op P_{--}^{\op\lf rs/2\rf}.\\
C(r,\eta)\ot N(s)\cong C(r,\eta)\ot N'(s)\cong P^{\op rs}\op P_{--}^{\op rs}.
\end{array}$
\end{theorem}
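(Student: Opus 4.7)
The plan is to mirror the strategy of Theorem \ref{dec} almost verbatim, the only additional work being careful bookkeeping of signs and parity. Since $\mh$ is special biserial with the quiver $Q_\mh$ described above, every indecomposable of Loewy length $\leq 2$ is presented by an explicit diagram listing a basis together with the $x$- and $y$-actions, and the tensor product of two such modules has a basis of tensors $u\ot u'$ on which $x,y$ act via the twisted coproduct $\De(x)=x\ot 1+g\ot x$, $\De(y)=y\ot 1+h\ot y$. My plan is, for each of items (1)--(14), to write down the basis of the tensor product, group it into subsets stable under $x$ and $y$, and read off the isomorphism type of each summand by comparing its diagram with the standard diagrams for $P$, $M(r)$, $W(r)$, $N(r)$, $N'(r)$, $C(r,\eta)$ together with the appropriate sign twist $S(s_1,s_2)$. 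The sign $(s_1,s_2)$ of each vertex in the resulting diagram is determined by the signs of the tensor factors, taking into account that in $\mh$ we have $gy=yg$ and $hx=xh$ while $gx=-xg$ and $hy=-yh$.

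For the base cases (tensor products with $P$, $M(1)$, $W(1)$, $N(1)$, $N'(1)$, $C(1,\eta)$) I would simply exhibit the diagrams, just as in the proof of Theorem \ref{dec}. The general cases then follow by induction on $r$ via associativity and the Krull--Schmidt theorem: for example, for item (4) I would compute $(M(1)\ot M(r))\ot M(s)$ in two different ways using items (2) and the previously established cases, and cancel the common projective parts to extract $M(r{+}1)\ot M(s)$; similarly for (5), (6), (11), (12), (13). The ceiling/floor functions $\lc\cdot\rc,\lf\cdot\rf$ in the projective summands arise because $P\op P_{+-}\cong P\ot S(-,+)$-translate pairs collapse into a single ``$P\op P_{--}$'' pattern, and the number of each sign in the decomposition depends on the parity of $r$, $s$, and $rs$.

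The only serious bookkeeping obstacle, in my view, will be correctly tracking the sign twists $S_{s_1,s_2}$ on the non-projective summand in items (6), (9), (10), (12), (13). Because of the asymmetric commutation relations $gy=yg$, $hx=xh$, the signs of $u_i$ and $v_i$ in, say, $M(r)\ot W(s)$ alternate in a pattern that depends on the parity of both $r$ and $s$, and one must verify that the residual string module has its top vertex $u_1$ carrying precisely the sign $(|r{-}1|,|r|)$ claimed. I would handle this by fixing the signs on $u_1$ of the base cases and then propagating them through the induction, using that tensoring with $S(s_1,s_2)$ is compatible with $\ot$ (an analogue of Lemma \ref{sim} that one should first record for $\mh$). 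A parallel subtlety appears in (3) and (11): the identity $C(r,\eta)\ot S_{--}\cong C(r,\eta)$ reflects that the two ``rows'' of the band diagram of $C(r,\eta)$ already carry opposite sign, so the $(-,-)$ twist merely permutes the vectors, and one must check that this permutation respects the band parameter $\eta$ rather than sending $\eta\mapsto-\eta$.

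Once every module-theoretic decomposition is written out, no further argument is required: the Krull--Schmidt theorem together with the complete list of indecomposables assembled at the end of Section 5.3 guarantees that the exhibited direct sum is the decomposition. The length of the proof is therefore essentially proportional to the number of diagrams, and the main risk is a sign error rather than a conceptual gap.
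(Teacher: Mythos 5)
Your plan mirrors the paper's proof of Theorem~\ref{dec1} essentially verbatim: exhibit explicit diagrams for the base cases, then extend to general $r$ via associativity and the Krull--Schmidt theorem, while tracking the sign twists $S_{s_1,s_2}$ on the non-projective summand and the parity-dependent ceiling/floor counts of the projectives. The one small deviation is item (8), where the paper sidesteps the unwieldy base-case diagram for $W(1)\ot C(s,\eta)$ and instead extracts the answer from items (6) and (7) by associativity, but this shortcut fits naturally inside the inductive framework you describe.
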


\begin{proof}
(Sketch) For (1), (2), (3), note that the decomposition of the tensor product of $M$ with projective modules only depends on the composition factors of $M$.
On the other hand, by the construction of band modules, $M\lb\be_4\al_4^{-1}\be_1\al_1^{-1},r,\eta\rb\cong M\lb\be_1\al_1^{-1}\be_4\al_4^{-1},r,\eta\rb$.
That is, $C(r,\eta)\ot S_{--}\cong C(r,\eta)$. Meanwhile, $C(r,\eta)\ot S_{+-}$ corresponds to the band module $M\lb\al_3\be_3^{-1}\al_2\be_2^{-1},r,\eta\rb$.

For (4), the base case $M(1)\ot M(s)\cong P^{\op\lc s/2\rc}\op P_{--}^{\op\lf s/2\rf}\op M(s+1)_{-+}$ should be as follows,
\[\xymatrix@=1em{
&{\stt{u_1\ot u'_i}}\ar@{->}[ld] \ar@{.>}[rd]&\\
{\stt{v_1\ot u'_i+u_1\ot v'_i}}\ar@{.>}[rd]&&{\stt{v_2\ot u'_i+u_1\ot v'_{i+1}}}\ar@{->}[ld]\\
&{\stt{v_2\ot v'_i+v_1\ot v'_{i+1}}}&}\]
for $i=1,\dots,s$. Meanwhile,
\[\xymatrix@=1em{
&\stt{u_1\ot v'_1}\ar@{->}[ld] \ar@{.>}[rd]& &\stt{v_2\ot u'_1}\ar@{->}[ld] &\cdots&\stt{v_2\ot u'_s}\ar@{->}[ld] \ar@{.>}[rd]&\\
\stt{v_1\ot v'_1}&&\stt{v_2\ot v'_1}&\cdots&\stt{-v_2\ot v'_s}&&\stt{-v_2\ot v'_{s+1}}}.\]
Now we prove the general case $M(r)\ot M(s)$ by induction on $r$. From (2) and the base case, we see that
\[\begin{split}
(M(1)&\ot M(r)) \ot M(s)\cong\lb P^{\op \lc r/2\rc}\op P_{--}^{\op \lf r/2\rf}\op M(r{+}1)_{-+}\rb\ot M(s)\\
&\cong P^{\op \lc rs/2\rc}\op P_{--}^{\op \lf rs/2\rf}\op P_{+-}^{\op \lf r(s+1)/2\rf}\op P_{-+}^{\op \lc r(s+1)/2\rc}\op(M(r{+}1)_{-+}\ot M(s)).
\end{split}\]
On the other hand, using (2) and the induction hypothesis, we have
\[\begin{split}
M(1)&\ot(M(r)\ot M(s))\cong M(1)\ot\lb P^{\op \lc rs/2\rc}\op P_{--}^{\op \lf rs/2\rf}\op M(r{+}s)_{-+}\rb\\
&\cong \lb P^{\op \lc rs/2\rc}\op P_{--}^{\op \lf rs/2\rf}\op P_{+-}^{\op rs}\op P_{-+}^{\op rs}\rb\op\lb P_{+-}^{\op\lf (r+s)/2\rf}\op P_{-+}^{\op\lc (r+s)/2\rc}\op M(r{+}s{+}1)\rb.
\end{split}\]
By the Krull-Schmidt theorem, they combine to give $P^{\op\lc (r+1)s/2\rc}\op P_{--}^{\op\lf (r+1)s/2\rf}\op M(r{+}s{+}1)_{-+}$. One can prove the decompositions (5) and (6) by induction similarly.

For (7), we first prove the base case $M(1)\ot C(s,\eta)$. The decomposition is given by
\[\xymatrix@=0.1em{
&\stt{u_1\ot u'_{2i-1}}\ar@{->}[ld] \ar@{.>}[rd]&\\
{\begin{split}
&\stt{u_1\ot v'_{2i-1}}\\
&\stt{+v_1\ot u'_{2i-1}}
\end{split}}\ar@{.>}[rd]&&{\begin{split}
&\stt{u_1\ot(\eta v'_{2i}+v'_{2i-2})}\\
&\stt{+v_2\ot u'_{2i-1}}
\end{split}}\ar@{->}[ld]\\
&{\stt{v_1\ot(\eta v'_{2i}+v'_{2i-1})+v_2\ot v'_{2i-1}}}&}\,
\xymatrix@=1em{
&\stt{u_1\ot u'_{2i}}\ar@{->}[ld] \ar@{.>}[rd]&\\
{\stt{u_1\ot v'_{2i}+v_1\ot u'_{2i}}}\ar@{.>}[rd]&&{\stt{u_1\ot v'_{2i-1}+v_2\ot u'_{2i}}}\ar@{->}[ld]\\
&{\stt{v_1\ot v'_{2i-1}+v_2\ot v'_{2i}}}&}
\]
for $i=1,\dots,s$, and
\[\xymatrix@C=1em{
\stt{v_1\ot u_1'}\ar@{->}[d] \ar@{.>}[rd]|<<<\eta&\stt{-v_1\ot u'_2}\ar@{->}[d] \ar@{.>}[ld] &\stt{v_1\ot u'_3}\ar@{->}[d]\ar@{.>}[rd]|<<<\eta\ar@{.>}[ld]
&\stt{-v_1\ot u'_4}\ar@{->}[d] \ar@{.>}[ld]&\cdots&\stt{v_1\ot u'_{2s-1}}\ar@{->}[d] \ar@{.>}[rd]|<<<\eta&\stt{-v_1\ot u'_{2s}}\ar@{->}[d] \ar@{.>}[ld]\\
\stt{-v_1\ot v'_1}&\stt{v_1\ot v'_2}&\stt{-v_1\ot v'_3}&\stt{v_1\ot v'_4}&\cdots&\stt{-v_1\ot v'_{2s-1}}&\stt{v_1\ot v'_{2s}}}.\]
Now one can prove the general case by induction on $r$ via (1), (2) and (4).

For (8), it is complicated to describe the decomposition even for the base case $W(1)\ot C(s,\eta)$. Instead, we use (6), (7) to deal with it.
\[\begin{split}
W(1)\ot(M(1)\ot C(s,\eta))&\cong W(1)\ot(P^{\op s}\op P_{--}^{\op s}\op C(s,\eta)_{-+})\\
&\cong P^{\op 2s}\op P_{--}^{\op 2s}\op P_{-+}^{\op s}\op P_{+-}^{\op s}\op (W(1)\ot C(s,\eta)_{-+}).
\end{split}
\]
On the other hand, we have
\[\begin{split}
(W(1)\ot(M(1))\ot C(s,\eta)&\cong (P\op P_{--}\op S_{+-})\ot C(s,\eta)_{-+}\\
&\cong P^{\op 2s}\op P_{--}^{\op 2s}\op P_{-+}^{\op 2s}\op P_{+-}^{\op 2s}\op C(s,\eta)_{+-}.
\end{split}\]
Hence, by the Krull-Schmidt theorem, they combine to give
\[W(1)\ot C(s,\eta)\cong P^{\op s}\op P_{--}^{\op s}\op C(s,\eta)_{--}.\]
Now one can prove the general case by induction on $r$ via (1), (2) and (5).

For (9), the base case $M(1)\ot N(s)$ is as follows.
\[\xymatrix@=1em{
&{\stt{u_1\ot u'_i}}\ar@{->}[ld] \ar@{.>}[rd]&\\
{\stt{u_1\ot v'_i+v_1\ot u'_i}}\ar@{.>}[rd]&&{\stt{u_1\ot v'_{i+1}+v_2\ot u'_i}}\ar@{->}[ld]\\
&{\stt{v_1\ot v'_{i+1}+v_2\ot v'_i}}&}\] for $i=1,\dots,s$, where we
take $v'_{s+1}=0$ for simplicity. Meanwhile,
\[\xymatrix@=1em{
&\stt{v_1\ot u'_1}\ar@{->}[ld]^-{-1} \ar@{.>}[rd]& &\stt{v_1\ot u'_2}\ar@{->}[ld]^-{-1} &\cdots&\stt{v_1\ot u'_s}\ar@{->}[ld]^-{-1}\\
\stt{v_1\ot v'_1}&&\stt{v_1\ot v'_2}&\cdots&\stt{v_1\ot v'_s}&}.\]
Now we decompose $M(r)\ot N(s)$ by induction on $r$. First, from (3)
and (4), we have
\[\begin{split}
(M(1)&\ot M(r))\ot N(s)\cong(P^{\op\lc r/2\rc}\op P_{--}^{\op\lf
r/2\rf}\op M(r{+}1)_{-+})\ot N(s)\\
&\cong\lb P^{\op\lc rs/2\rc}\op P_{--}^{\op\lf rs/2\rf}\op
P_{+-}^{\op\lf rs/2\rf}\op P_{-+}^{\op\lc
rs/2\rc}\rb\op(M(r{+}1)_{-+}\ot N(s)).
\end{split}\] On the other hand, by induction and
(2), we get
\[\begin{split}
M(1)&\ot(M(r)\ot N(s))\cong M(1)\ot\lb P^{\op\lc rs/2\rc}\op
P_{--}^{\op\lf rs/2\rf}\op N(s)_{-+}\rb\\
&\cong\lb P^{\op\lc rs/2\rc}\op P_{--}^{\op\lf rs/2\rf}\op
P_{+-}^{\op rs}\op P_{-+}^{\op rs}\rb\op\lb P_{+-}^{\op\lf
s/2\rf}\op P_{-+}^{\op\lc s/2\rc}\op N(s)\rb.
\end{split}\] They combine to give
$M(r{+}1)\ot N(s)\cong P^{\op\lc (r+1)s/2\rc}\op P_{--}^{\op\lf
(r+1)s/2\rf}\op N(s)_{-+}$. The case of $M(r)\ot N'(s)$ is quite
similar to the base case $M(1)\ot N'(s)$ as follows.
\[\xymatrix@=1em{
&{\stt{u_1\ot u'_i}}\ar@{->}[ld] \ar@{.>}[rd]&\\
{\stt{u_1\ot v'_{i-1}+v_1\ot u'_i}}\ar@{.>}[rd]&&{\stt{u_1\ot v'_i+v_2\ot u'_i}}\ar@{->}[ld]\\
&{\stt{v_1\ot v'_i+v_2\ot v'_{i-1}}}&}\] for $i=1,\dots,s$, where we
take $v'_0=0$ for simplicity. Meanwhile,
\[\xymatrix@=1em{
\stt{v_2\ot u'_1}\ar@{.>}[rd]_-{-1}&&\stt{v_2\ot u'_2}\ar@{->}[ld]\ar@{.>}[rd]_-{-1} &\cdots&\stt{v_2\ot u'_s}\ar@{.>}[rd]_-{-1}&\\
&\stt{v_2\ot v'_1}&&\stt{v_2\ot v'_2}&\cdots&\stt{v_2\ot v'_s}}.\]
The decomposition (10) is also straightforward to check.

For (11), when $\eta\neq\ga$, the decomposition can be given by
\[\begin{array}{l}
\xymatrix@=0.5em{
&\stt{u_{2i-1}\ot u'_{2j-1}}\ar@{->}[ld] \ar@{.>}[rd]&\\
{\begin{split}
&\stt{-u_{2i-1}\ot v'_{2j-1}}\\
&\stt{+v_{2i-1}\ot u'_{2j-1}}
\end{split}}\ar@{.>}[rd]&&{\begin{split}
&\stt{-u_{2i-1}\ot(\ga v'_{2j}+v'_{2j-2})}\\
&\stt{+(\eta v_{2i}+v_{2i-2})\ot u'_{2j-1}}
\end{split}}\ar@{->}[ld]\\
&{\begin{split}
&\stt{-v_{2i-1}\ot(\ga v'_{2j}+v'_{2j-2})}\\
&\stt{-(\eta v_{2i}+v_{2i-2})\ot v'_{2j-1}}
\end{split}}&}\,
\xymatrix@=0.5em{
&\stt{u_{2i-1}\ot u'_{2j}}\ar@{->}[ld] \ar@{.>}[rd]&\\
{\begin{split}
&\stt{-u_{2i-1}\ot v'_{2j}}\\
&\stt{+v_{2i-1}\ot u'_{2j}}
\end{split}}\ar@{.>}[rd]&&
{\begin{split}
&\stt{-u_{2i-1}\ot v'_{2j-1}}\\
&\stt{+(\eta v_{2i}+v_{2i-2})\ot u'_{2j}}
\end{split}}\ar@{->}[ld]\\
&{\begin{split}
&\stt{-v_{2i-1}\ot v'_{2j-1}}\\
&\stt{-(\eta v_{2i}+v_{2i-2})\ot v'_{2j}}
\end{split}}&}\\
\xymatrix@C=-4em{
&\stt{u_{2i}\ot u'_{2j-1}}\ar@{->}[ld] \ar@{.>}[rd]&\\
{\stt{u_{2i}\ot v'_{2j-1}+v_{2i}\ot u'_{2j-1}}}\ar@{.>}[rd]&&{\stt{u_{2i}\ot(\ga v'_{2j}+v'_{2j-2})+v_{2i-1}\ot u'_{2j-1}}}\ar@{->}[ld]\\
&{\stt{v_{2i}\ot(\ga v'_{2j}+v'_{2j-2})+v_{2i-1}\ot v'_{2j-1}}}&}\,
\xymatrix@C=-2em{
&\stt{u_{2i}\ot u'_{2j}}\ar@{->}[ld] \ar@{.>}[rd]&\\
{\stt{u_{2i}\ot v'_{2j}+v_{2i}\ot u'_{2j}}}\ar@{.>}[rd]&&{\stt{u_{2i}\ot v'_{2j-1}+v_{2i-1}\ot u'_{2j}}}\ar@{->}[ld]\\
&{\stt{v_{2i-1}\ot v'_{2j}+v_{2i}\ot v'_{2j-1}}}&}
\end{array}
\]
for $i=1,\dots,r,~j=1,\dots,s$.

When $\eta=\ga$, we assume $r\leq s$, without loss of generality.
First, there exist the following two submodules isomorphic to $C(r,\eta)_{--},C(r,\eta)_{-+}$, respectively.
\[\begin{array}{l}
\xymatrix@=2em{
\stt{U_1}\ar@{->}[d] \ar@{.>}[rd]|<<<\eta&\stt{U_2}\ar@{->}[d] \ar@{.>}[ld] &\stt{U_3}\ar@{->}[d]\ar@{.>}[rd]|<<<\eta\ar@{.>}[ld]&\stt{U_4}\ar@{->}[d]
 \ar@{.>}[ld]&\cdots&\stt{U_{2r-1}}\ar@{->}[d] \ar@{.>}[rd]|<<<\eta&\stt{U_{2r}}\ar@{->}[d] \ar@{.>}[ld]\\
\stt{V_1}&\stt{V_2}&\stt{V_3}&\stt{V_4}&\cdots&\stt{V_{2r-1}}&\stt{V_{2r}}},\\
\xymatrix@=2em{
\stt{u_1\ot v'_{2s}}\ar@{->}[d] \ar@{.>}[rd]|<<<\eta&\stt{u_2\ot v'_{2s}}\ar@{->}[d] \ar@{.>}[ld] &\stt{u_3\ot v'_{2s}}\ar@{->}[d]\ar@{.>}[rd]|<<<\eta\ar@{.>}[ld]
&\stt{u_4\ot v'_{2s}}\ar@{->}[d] \ar@{.>}[ld]&\cdots&\stt{u_{2r-1}\ot v'_{2s}}\ar@{->}[d] \ar@{.>}[rd]|<<<\eta&\stt{u_{2r}\ot v'_{2s}}\ar@{->}[d] \ar@{.>}[ld]\\
\stt{v_1\ot v'_{2s}}&\stt{v_2\ot v'_{2s}}&\stt{v_3\ot v'_{2s}}&\stt{v_4\ot v'_{2s}}&\cdots&\stt{v_{2r-1}\ot v'_{2s}}&\stt{v_{2r}\ot v'_{2s}}},
\end{array}\]
where for $k=1,\dots,r$,
\[\begin{split}
U_{2k-1}&=\sum\limits_{i=1}^ku_{2i-1}\ot u'_{2(k-i)+1}+\eta u_{2i}\ot u'_{2(k-i)+2}+\sum\limits_{i=1}^{k-1}u_{2i}\ot u'_{2(k-i)},\\
U_{2k}&=\sum\limits_{i=1}^ku_{2i-1}\ot u'_{2(k-i)+2}+u_{2i}\ot u'_{2(k-i)+1},\\
V_{2k-1}&=\sum_{i=1}^kv_{2i-1}\ot u'_{2(k-i)+1}-u_{2i-1}\ot v'_{2(k-i)+1}
+\eta\,\Big(v_{2i}\ot u'_{2(k-i)+2}+u_{2i}\ot v'_{2(k-i)+2}\Big)\\
&\quad+\sum_{i=1}^{k-1}v_{2i}\ot u'_{2(k-i)}+u_{2i}\ot v'_{2(k-i)},\\
V_{2k}&=\sum\limits_{i=1}^kv_{2i-1}\ot u'_{2(k-i)+2}-u_{2i-1}\ot v'_{2(k-i)+2}+v_{2i}\ot u'_{2(k-i)+1}+u_{2i}\ot v'_{2(k-i)+1}.
\end{split}\]
One can check that $y\cdot U_{2k-1}=\eta V_{2k}+V_{2k-2}$. Meanwhile,
we choose $4rs-2r$ projective submodules listed above for $i=1,\dots,r,~j=2,\dots,s$.
One should examine that all vectors in the diagrams of the chosen submodules form a basis of $C(r,\eta)\ot C(s,\eta)$.
Hence, They combine to give the decomposition
\[C(r,\eta)\ot C(s,\eta)\cong P^{\op(2rs-r)}\op P_{--}^{\op(2rs-r)}\op C(r,\eta)_{--}\op C(r,\eta)_{-+}.\]

For (13), we assume that $r\leq s$ without loss of generality. It gives
\[
\xymatrix@=1em{
\stt{U_1}\ar@{.>}[rd]&&\stt{U_2}
\ar@{->}[ld] \ar@{.>}[rd] &\cdots&\stt{U_{r-1}}\ar@{.>}[rd]&&\stt{U_r}\ar@{->}[ld] \ar@{.>}[rd]&\\
&\stt{V_1}&&\stt{V_2}&\cdots&\stt{V_{r-1}}&&\stt{V_r}},
\]
where $U_k=\sum_{i=1}^ku_i\ot u'_{k+1-i},V_k=\sum_{i=1}^k v_i\ot u'_{k+1-i}+(-1)^{i-1}u_i\ot v'_{k+1-i},~k=1,\dots,r$. Meanwhile,
\[
\xymatrix@=1em{
\stt{u_1\ot v'_s}\ar@{.>}[rd]&&\stt{u_2\ot v'_s}\ar@{->}[ld] \ar@{.>}[rd] &\cdots&\stt{u_{r-1}\ot v'_s}\ar@{.>}[rd]&&\stt{u_r\ot v'_s}\ar@{->}[ld] \ar@{.>}[rd]&\\
&\stt{v_1\ot v'_s}&&\stt{v_2\ot v'_s}&\cdots&\stt{v_{r-1}\ot v'_s}&&\stt{v_r\ot v'_s}}.\]
\[\xymatrix@=1em{
&{\stt{u_i\ot u'_j}}\ar@{->}[ld] \ar@{.>}[rd]&\\
{\stt{v_{i-1}\ot u'_j+(-1)^{i-1}u_i\ot v'_{j-1}}}\ar@{.>}[rd]&&{\stt{v_i\ot u'_j+(-1)^{i-1}u_i\ot v'_j}}\ar@{->}[ld]\\
&{\stt{(-1)^{i-1}(v_{i-1}\ot v'_j+v_i\ot v'_{j-1})}}&}\]
for $i=1,\dots,r,j=2,\dots,s$, where we take $v_0=0$ for simplicity.

The decompositions (12) and (14) are similar to prove.
\end{proof}

\begin{cor}\label{gre1}
The Green ring $r(\mh)$ of $\mh$ is a commutative ring generated by \[\Bigl\{\, [S(-,-)],\, [S(+,-)],\, [P(+,+)],\, [M(1)],
\, [W(1)]\,\Bigr\}\cup\Bigl\{\,[C(r,\eta)],\, [N(r)],\, [N'(r)]\,\Bigr\}_{r\in \mathbb{Z}^+,\eta\in\bK^\times},\]
subject to the following relations
\[\begin{array}{l}
S^2=S_-^2=1,\quad C_{r,\eta}S=C_{r,\eta},\\
P^2=(1{+}S)(1{+}S_-)P,\quad MP=(1{+}S_-{+}SS_-)P,\\
WP=(1{+}S{+}S_-)P,\quad C_{r,\eta}P=r(1{+}S{+}S_-{+}SS_-)P,\\
N_rP=\lc r/2\rc(1{+}SS_-)P{+}\lf r/2\rf(S{+}S_-)P,\\
N'_rP=\lc r/2\rc(1{+}S_-)P{+}\lf r/2\rf(S{+}SS_-)P,\\
MW=(1{+}S)P{+}S_-,\\
MN_r=\lc r/2\rc P+\lf r/2\rf SP+SS_-N_r,\\
MN'_r=\lc r/2\rc P+\lf r/2\rf SP+S_-N'_r,\\
WN_r=\lc r/2\rc P+\lf r/2\rf SP+SN_r,\\
WN'_r=\lf r/2\rf P+\lc r/2\rc SP+N'_r,\\
MC_{r,\eta}=r(1{+}S)P{+}S_-C_{r,\eta},\quad WC_{r,\eta}=r(1{+}S)P{+}C_{r,\eta},\\
C_{r,\eta}C_{s,\ga}=\begin{cases}
2rs(1{+}S)P,&\eta\neq\ga,\\
(2rs-\ull{r,s})(1{+}S)P+(1{+}S_-)C_{\ull{r,s},\eta},&\eta=\ga.
\end{cases}\\
N_rN_s=\lc(rs-\ull{r,s})/2\rc P+\lf(rs-\ull{r,s})/2\rf SP+(S^{\ol{r,s}-1}+SS_-)N_{\ull{r,s}},\\
N'_rN'_s=\lf(rs-\ull{r,s})/2\rf P+\lc(rs-\ull{r,s})/2\rc SP+(1{+}S^{\ol{r,s}-1}S_-)N'_{\ull{r,s}},\\
N_rN'_s=\lc rs/2\rc P+\lf rs/2\rf SP,\\
C_{r,\eta}N_s=C_{r,\eta}N'_s=N_rN'_s=rs(1{+}S)P,
\end{array}\]
where we abbreviate $[S(-,-)],[S(+,-)],[P(+,+)],[M(1)],[W(1)],[C(r,\eta)],[N(r)],[N'(r)]$ to $S,\, S_-,\, P,\, M,\, W,\, C_{r,\eta},\, N_r,\, N'_r$, successively.
\end{cor}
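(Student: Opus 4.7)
The plan is to derive Corollary \ref{gre1} directly from Theorem \ref{dec1} in exactly the same spirit as Corollary \ref{green} was obtained from Theorem \ref{dec}. Since $\mh=H_4\ot H_4$ is quasitriangular (being the tensor product of the quasitriangular Hopf algebra $H_4$ with itself, with $R$-matrix the tensor square of the $R$-matrix recalled in the text), its module category is braided and hence $r(\mh)$ is a commutative ring; this already accounts for the tacit symmetry of all the listed relations.

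First I would argue that the listed set indeed generates $r(\mh)$ as a ring. By the classification of indecomposables of $\mh$ given just before Theorem \ref{dec1}, every indecomposable is (up to tensoring with the simples $S,S_-,SS_-$) one of $1,\,P,\,M(r),\,W(r),\,N(r),\,N'(r),\,C(r,\eta)$, and the four one-dimensional simples are generated by $S$ and $S_-$ with $S^2=S_-^2=1$. Hence it suffices to include $S,\,S_-,\,P,\,M=M(1),\,W=W(1)$ together with the families $C_{r,\eta},N_r,N'_r$, precisely the generators listed. Note the identities $C_{r,\eta}S=C_{r,\eta}$ follow from the last assertion of Theorem \ref{dec1}(3), namely $C(r,\eta)\ot S_{--}\cong C(r,\eta)$, combined with the easy observation (already used implicitly in passing from $S(s_1,s_2)$ to signs on $P,M,W,N,N'$) that tensoring with $S$ in each of these families produces the twist indicated in the subscript notation.

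Next I would go through the relations one block at a time and check that each one is the class-level translation of the corresponding case in Theorem \ref{dec1}. Concretely, for each item (1)--(14) of Theorem \ref{dec1} I would rewrite the right-hand side in $r(\mh)$ using $[P_{s_1s_2}]=S^{a}S_-^{b}P$ (with $(a,b)$ determined by $(s_1,s_2)$), $[M(r)_{s_1s_2}]=S^aS_-^b M_r$, etc.; then collect like terms to obtain the displayed relation. For example, Theorem \ref{dec1}(1) yields $P^2=P+P_{--}+P_{+-}+P_{-+}=(1+S+S_-+SS_-)P=(1+S)(1+S_-)P$; Theorem \ref{dec1}(2) yields $MP=(\lc r/2\rc+\lf r/2\rf S+\lc r/2\rc S_-+\lc(r+1)/2\rc SS_-)P$ which collapses, at $r=1$, to $(1+S_-+SS_-)P$, matching the stated relation; the parity coefficients $\lc\cdot\rc,\lf\cdot\rf$ in the $N_r,N'_r$ relations come from the corresponding coefficients in Theorem \ref{dec1}(3),(9),(10),(12),(13),(14) together with the rule $|\ol{r,s}-1|=S^{\ol{r,s}-1}$ after identification (since $S^{2k}=1$ and $S^{2k+1}=S$). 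The $MW=(1+S)P+S_-$ relation reflects that the unique nonprojective summand in $M(1)\ot W(1)$ is $S_{|0|,|1|}=S_{+,-}=S_-$.

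The routine calculations are therefore just bookkeeping of signs and of $\lc\cdot\rc$ versus $\lf\cdot\rf$. The one step that needs a little care — and which I would flag as the main obstacle — is translating the indices $(|r-1|,|r|)$, $(|s|,|s|)$, $(|\ol{r,s}-1|,|\ol{r,s}-1|)$, $(|\ol{r,s}-1|,|\ol{r,s}|)$ appearing in Theorem \ref{dec1}(6), (12), (13) uniformly into the monomials $S^aS_-^b$; here one must keep careful track that $|k|$ records the parity of $k$ and that the subscript $(s_1,s_2)$ corresponds to the operation of tensoring with $S(s_1,s_2)$, so that $N(\ull{r,s})_{|\ol{r,s}-1|,|\ol{r,s}-1|}=S^{\ol{r,s}-1}SS_-^{-1}N_{\ull{r,s}}\cdot S_-=S^{\ol{r,s}-1}S_-^{\,\ol{r,s}-1}N_{\ull{r,s}}$ (and similarly elsewhere), which after noting $S^2=S_-^2=1$ reduces to $S^{\ol{r,s}-1}N_{\ull{r,s}}$ or $SS_-N_{\ull{r,s}}$ according to parity; summing the two summands of Theorem \ref{dec1}(12) then produces the combination $(S^{\ol{r,s}-1}+SS_-)N_{\ull{r,s}}$ in the stated relation, and analogously for $N'_rN'_s$. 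Once this parity dictionary is fixed, every displayed relation matches an entry of Theorem \ref{dec1}, so the corollary follows.
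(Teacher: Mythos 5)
Your approach matches the paper's, which gives no separate proof of Corollary~\ref{gre1}: it is stated as an immediate consequence of Theorem~\ref{dec1}, exactly as you argue (just as Corollary~\ref{green} is stated as following from Lemma~\ref{sim} and Theorem~\ref{dec}). Your observations that commutativity of $r(\mh)$ comes from the quasitriangularity of $H_4\otimes H_4$, and that every relation is the class-level rewrite of a direct-sum decomposition under the dictionary $[P_{s_1,s_2}]=[S(s_1,s_2)]\cdot P$ etc., are exactly what the paper tacitly relies on.

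One small correction in your parity dictionary: since $S=[S(-,-)]$, $S_-=[S(+,-)]$, and $SS_-=[S(-,+)]$, a subscript with equal signs $(|k|,|k|)$ corresponds to the factor $S^{k}$ (it is $1$ for $k$ even and $S$ for $k$ odd), not to $S^{k}S_-^{k}$. Thus $N(\ull{r,s})_{|\ol{r,s}-1|,|\ol{r,s}-1|}=S^{\ol{r,s}-1}N_{\ull{r,s}}$ directly, and the intermediate expression $S^{\ol{r,s}-1}SS_-^{-1}N_{\ull{r,s}}\cdot S_-=S^{\ol{r,s}-1}S_-^{\ol{r,s}-1}N_{\ull{r,s}}$ that you wrote is incorrect ($(SS_-)^{\ol{r,s}-1}$ encodes $(|\ol{r,s}-1|,+)$, not $(|\ol{r,s}-1|,|\ol{r,s}-1|)$). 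Your final displayed relation $(S^{\ol{r,s}-1}+SS_-)N_{\ull{r,s}}$ is nonetheless the correct one, so this is a glitch in the bookkeeping rather than a gap in the argument; the same caution applies to the $(|\ol{r,s}-1|,|\ol{r,s}|)$ subscripts in items~(6) and~(13), where the correct factor is $S^{\ol{r,s}-1}S_-$.
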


The projective class algebra $p(\mh)=\bK[S,S_-,P]/(S^2{-}1,S_-^2{-}1,P^2{-}(1{+}S)(1{+}S_-)P)$.
The Jacobson radical $J(p(\mh))=\lb(1{-}S)P,\,(1{-}S_-)P\rb$,
thus
\[p(\mh)/J(p(\mh))=\bK[S,S_-,P]/(\,S^2{-}1,\,S_-^2{-}1,\,P^2{-}4P,\,(1{-}S)P,\,(1{-}S_-)P\,)
\cong\bK^5,\]
with the orthogonal idempotents $(1{-}S)(1{\pm} S_-)/4,\,(1{+}S)(1{-}S_-)/4,\,((1{+}S)(1{+}S_-){-}P)/4$, $P/4$.
Meanwhile, we can also calculate the Jacobson radical of $R(\mh)$.
\begin{theorem}
$J(R(\mh))$ is equal to
\[\lb(1{-}S)P,\,(1{-}S_-)P,\,(S^{r-1}{-}SS_-)N_r,\,(1{-}S^{r-1}S_-)N'_r,
\,(1{-}S_-)C_{r,\eta}\;\Big|\;r\in\bZ^+,\,\eta\in\bK^\times\rb.\]
\end{theorem}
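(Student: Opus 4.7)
The strategy follows the proofs of Theorems 4.3 and 4.5: I first reduce to the stable Green ring $\mb{St}(\mh) = R(\mh)/\mathcal{P}$ and then lift back. The starting point is to verify that every listed generator is actually nilpotent, and hence automatically in $J(R(\mh))$. For $(1{-}S)P$ and $(1{-}S_-)P$, direct use of $P^2 = (1{+}S)(1{+}S_-)P$ shows the squares vanish. For $(1{-}S_-)C_{r,\eta}$, the relation $C_{r,\eta}^2 = (2r^2{-}r)(1{+}S)P + (1{+}S_-)C_{r,\eta}$ from Corollary \ref{gre1} together with $(1{-}S_-)(1{+}S_-) = 0$ reduces the square to a multiple of $(1{-}S_-)(1{+}S)P$, already nilpotent. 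The algebraic identities $(S^{r-1}{-}SS_-)(S^{r-1}{+}SS_-) = 0$ and $(1{-}S^{r-1}S_-)(1{+}S^{r-1}S_-) = 0$ annihilate the recurring $N_r$- and $N'_r$-contributions in the squares of the remaining generators, leaving only projective tails that are nilpotent by the above.

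The main work is carried out in $\mb{St}(\mh)$. Set
\[J_0 := \lb(S^{r-1}{-}SS_-)N_r,\;(1{-}S^{r-1}S_-)N'_r,\;(1{-}S_-)C_{r,\eta}\;\big|\;r\in\bZ^+,\eta\in\bK^\times\rb,\]
a nilpotent ideal of $\mb{St}(\mh)$, and put $A := \mb{St}(\mh)/J_0$. Modulo $J_0$ one has $S_-N_r = S^rN_r$, $S_-N'_r = S^{r-1}N'_r$ and $S_-C_{r,\eta} = C_{r,\eta}$, so that the multiplication table of Corollary \ref{gre1} collapses. After fixing suitable normalising scalars, the differences $(N_{r+1}{-}N_r)/2$, $(N'_{r+1}{-}N'_r)/2$ and $(C_{r+1,\eta}{-}C_{r,\eta})/2$ (with $N_0 = N'_0 = C_{0,\eta} := 0$) become pairwise orthogonal idempotents in $A$, whose span is an ideal $I \subset A$. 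The quotient $\bar{A} := A/I$ is generated by $\{S,S_-,M,W\}$ subject only to $S^2 = S_-^2 = 1$ and $MW = S_-$ (the remaining relations killing only contributions in $I$ or $\mathcal{P}$), so $\bar{A} \cong \bK[\bZ_2 \times \bZ_2 \times \bZ]$, which is Jacobson semisimple. Hence $J(A) \subseteq I$.

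To show $J(A)\cap I = 0$, take any nonzero $x\in I$, write it in the idempotent basis $\{e_\al\}$ produced above, and pick an index $\al$ with nonzero coefficient $c$. Then $e_\al x e_\al = c\,e_\al$, whence $1 - c^{-1} e_\al x e_\al = 1 - e_\al$ annihilates the nonzero element $e_\al$ and is not invertible; by Lemma \ref{jac}, $x\notin J(A)$. Therefore $J(\mb{St}(\mh)) = J_0$. Finally, from $0 \to \mathcal{P} \to R(\mh) \to \mb{St}(\mh) \to 0$ we get $J(R(\mh)) \subset \pi^{-1}(J_0) = J_0 + \mathcal{P}$; the nilpotent pieces $(1{-}S)P, (1{-}S_-)P$ belong to $J(R(\mh))$ by Step~1, and the complementary idempotent $e := (1{+}S)(1{+}S_-)P/16$ satisfies $(e) = \bK e$, a semisimple ideal, so $(e)\cap J(R(\mh)) = 0$. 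Combining these yields the stated description.

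The genuinely delicate step is the construction and verification of the orthogonal idempotent basis of $I$ in the second paragraph. The parity-dependent factor $S^{r-1}$ in the generators of $J_0$ forces the effective action of $S_-$ on $N_r$ to alternate ($1$ for $r$ even, $S$ for $r$ odd), so the products $N_rN_s \pmod{J_0}$ split into several subcases according to the parities of $r,s,\ull{r,s},\ol{r,s}$, and the analogous subcases arise for the $N'$- and $C$-series. Getting the normalising constants and signs right so that idempotency and pairwise orthogonality hold simultaneously is the main bookkeeping hurdle; once it is carried through, the Jacobson-semisimplicity of $\bar{A}$ and the lift to $R(\mh)$ follow the pattern already established in Theorems \ref{rad} and the proof for $R(D(H_4))$.
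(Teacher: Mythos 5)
Your overall scaffolding (reduce to $\mb{St}(\mh)$, pass to $A=\mb{St}(\mh)/J_0$, identify an ideal $I$ of idempotents, show $\bar A=A/I$ is semisimple, and lift via the projective idempotent) matches the paper. But there is a genuine error at what you yourself flag as the delicate step: the elements $(N_{r+1}-N_r)/2$ are \emph{not} idempotents in $A$. Modulo $J_0$ and projectives one has $SS_-N_r\equiv S^{r-1}N_r$, hence $N_r^2\equiv 2S^{r-1}N_r$ and $N_{r+1}N_r\equiv (S^r+S^{r-1})N_r$, which gives
\[(N_{r+1}-N_r)^2\equiv 2S^{r}(N_{r+1}-N_r).\]
For $r$ odd the factor $S^r=S$ does not disappear, so $(N_{r+1}-N_r)/2$ squares to $S(N_{r+1}-N_r)/2\neq (N_{r+1}-N_r)/2$; in particular $(N_2-N_1)/2$ is already a counterexample. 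No ``normalising scalar'' can remove this, since the obstruction is the group element $S$, not a numerical factor. The same parity obstruction occurs for the $N'$-series.

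The paper resolves precisely this difficulty by switching to the alternating generators $\til_r:=N_r-SN_{r-1}$ and $\til'_r:=N'_r-SN'_{r-1}$, computing their products, and then cutting with the projectors $(1\pm S)/2$ (and an extra $S_-$ in the $N$-case) to obtain the orthogonal idempotents
\[S_-(1{+}S)\til_r/4,\quad S_-(S{-}1)(\til_r{-}\til_{r-1})/4,\quad (1{+}S)\til'_r/4,\quad (1{-}S)(\til'_r{-}\til'_{r-1})/4,\]
together with $(C_{r+1,\eta}-C_{r,\eta})/2$, which do form a basis of $I$ and make the rest of the argument go through. Without this (or an equivalent) construction, the claim $J(A)\cap I=0$ is unsupported, and the proof does not close. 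The first and last paragraphs of your proposal are fine; the missing content is the correct idempotent basis.
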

\begin{proof}
Again we first deal with $\mb{St}(\mh)$. Take \[J=\lb(S^{r-1}{-}SS_-)N_r,\,(1{-}S^{r-1}S_-)N'_r,
\,(1{-}S_-)C_{r,\eta}\;\Big|\;r\in\bZ^+,\,\eta\in\bK^\times\rb\]
and $A=\mb{St}(\mh)/J$. Clearly, $J$ is nilpotent and it reduces to show that $A$ is Jacobson semisimple. 
It is easy to see that $(C_{r+1,\eta}-C_{r,\eta})/2,\,r\in\bN$ are orthogonal idempotents in $A$. For the complicated cases of $N_r, N'_r$, 
we consider the following alternating generators. Let
\[\til_r:=N_r-SN_{r-1},\ \til'_r:=N'_r-SN'_{r-1},\quad r\in\bZ^+,\]
where $N_0=N'_0=0$. One can check that for any $r, t\in\bZ^+$,
\[
\til_r\til_t=[(1+\de_{r,t})(1+S)-2]S_-\til_{\ull{r,t}},\,
\til'_r\til'_t=[2-(1-\de_{r,t})(1+S)]\til'_{\ull{r,t}},
\]
and thus
\[\begin{array}{l}
(\til_r-\til_{r-1})(\til_t-\til_{t-1})=\begin{cases}
2S_-(S\til_r+\til_{r-1}),&r=t,\\
-(1+S)S_-\til_{\ull{r,t}},&|r-t|=1,\\
0,&|r-t|>1.
\end{cases}\\
(\til'_r-\til'_{r-1})(\til'_t-\til'_{t-1})=\begin{cases}
2(\til'_r+S\til'_{r-1}),&r=t,\\
-(1+S)\til'_{\ull{r,t}},&|r-t|=1,\\
0,&|r-t|>1.
\end{cases}
\end{array}\]
Now it is easy to deduce that
\[S_-(1{+}S)\til_r/4,\,S_-(S{-}1)(\til_r{-}\til_{r-1})/4,\,(1{+}S)\til'_r/4,\,(1{-}S)(\til'_r{-}\til'_{r-1})/4,\,r\in\bZ^+\]
are orthogonal idempotents in $A$.
Let $I$ be the ideal of $A$ generated by
\[\big\{\,N_r,\,N'_r,\,C_{r,\eta}\,\mid\,r\in\bN,\eta\in\bK^\times\,\big\}\]
and $\bar{A}:=A/I$, then $\bar{A}$ is generated by $S, S_-, M, W$ and isomorphic to $\bK[\bZ\times\bZ_2\times\bZ_2]$. $\bar{A}$ is clearly Jacobson semisimple and thus $J(A)\subseteq I$.
Now we find the following $\bK$-basis of $I$,
\[\big\{\,S_-(1{+}S)\til_r,\,S_-(1{-}S)(\til_r{-}\til_{r-1}),\,
(1{+}S)\til'_r,\,(1{-}S)(\til'_r{-}\til'_{r-1}),\,
C_{r,\eta}{-}C_{r-1,\eta}\,\big\}_{r\in\bZ^+,\eta\in\bK^\times}\]
which consists of orthogonal idempotents (up to scalars). Hence, we can obtain that $J(A)=J(A)\cap I=0$ exactly as in Theorem \ref{rad}. Furthermore, $J(R(\mh))$ is contained in
\[\lb(1{-}S)P,\,(1{-}S_-)P,\,(S^{r-1}{-}SS_-)N_r,\,(1{-}S^{r-1}S_-)N'_r,
\,(1{-}S_-)C_{r,\eta}\;\Big|\;r\in\bZ^+,\,\eta\in\bK^\times\rb.\]
In order to get the desired result of $J(R(\mh))$,
one only needs to check that $(1{+}S)(1{+}S_-)P/16$ is an idempotent and the ideal $((1{+}S)(1{+}S_-)P)=\bK(1{+}S)(1{+}S_-)P$, thus $((1{+}S)(1{+}S_-)P)\cap J(R(\mh))=0$.
\end{proof}

\subsection{Diverse information list on the Green rings}
Finally, in order to get a clear overview on the differences of the Green rings of $\mh$, $\bar{\ma}$ and $D(H_4)$
from the point of ring structural view, we summarize all computation results as follows:

\noindent
\begin{flushleft}
\renewcommand{\arraystretch}{1.2}

\begin{tabular}{|c|c|c|c|c|c|}
\hline Hopf alg. & Green ring & (co)quasitri. & blocks & proj. class alg. &
modulo rad. \\\hline $\mh$ &  comm. & QT, \,CQT & 1 &
$\bK[s,\,s_-,\,p]/\atop\lb
s^2{-}1,\,s_-^2{-}1,\,p^2{-}(1{+}s)(1{+}s_-)p\rb$ &
$\bK^5$\\\hline $\bar{\ma}$ & non-comm. & CQT & 2 &
$\bK[s,\,s_-,\,p]/\atop\lb s^2{-}1,\, s_-^2{-}1,\,p^2-2(1{+}s)p\rb$
&$\bK^6$ \\\hline $D(H_4)$ & comm. & QT, \,CQT &
3 & $\bK[s,\,p_+]/\atop\lb s^2{-}1,\,p_+^3{-}2(1{+}s)p_+\rb$ &
$\bK^4$\\\hline
\end{tabular}

\smallskip
\begin{tabular}{|c|c|}
\hline Hopf alg. &  Jacobson rad. of the Green alg.\\\hline
$\mh$ & $\begin{array}{l}
\big((1{-}s)p,\,(1{-}s_-)p,\,(s^{r-1}{-}ss_-)n_r,
(1{-}s^{r-1}s_-)n'_r,\,(1{-}s_-)c_{r,\eta}\,\big)_{r\in\bZ^+,\,\eta\in\bK^\times}
\end{array}$\\\hline
$\bar{\ma}$ &
$\lb(1{-}s)p,\,(1{-}s)n_r,\,(1{-}s)n'_r,\,(1{-}s)c_{r,\eta}\,\rb_{r\in\bZ^+,\eta\in\bK^\times}$\\\hline
$D(H_4)$ &
$\lb(1{-}s)p_+,\,(1{-}s)n_r,\,(1{-}s)n'_r,\,(1{-}s)c_{r,\eta}\,\rb_{r\in\bZ^+,\eta\in\bK^\times}$\\\hline
\end{tabular}

\end{flushleft}
\noindent
$\bullet$ In the tables, the generators of algebras or ideals stand for the respective modules.

\begin{rem}
Recall that the Sweedler algebra $H_4$ has the Hopf algebra structure defined by
\[\De(a)=a\ot a,\,\De(b)=b\ot1+a\ot b,\,\ve(a)=1,\,\ve(b)=0,\,S(a)=a,\,S(b)=-ba.\]
In \cite[Example 1.5]{Doi}, the author described all the co-quasitriangular structures on $H_4$. The universal $r$-matrices, denoted $\si_\al$, are parameterized by $\al\in\bK$ and defined as follows.
\[\begin{array}{c@{\hspace{2.5pt}}|@{\hspace{2.5pt}}c
@{\hspace{7.5pt}}c@{\hspace{5pt}}c@{\hspace{5pt}}c}
\si_\al & 1 & a & b & ab\\
\hline
1 & 1 & 1 & 0 & 0\\
a & 1 & -1 & 0 & 0\\
b & 0 & 0 & \al & \al\\
ab & 0 & 0 & -\al & \al\\
\end{array}\]
Note that $\si_\al$ satisfies the following condition
\[\si_\al(x_{(1)},y_{(1)})x_{(2)}y_{(2)}
=\si_\al(x_{(2)},y_{(2)})y_{(1)}x_{(1)},\,x,y\in H_4.\]
Hence, one can check that $\si_\al$ is also a Hopf 2-cocycle of $H_4$. By the above condition, we know that the twisted product $x\cdot_{\si_\al}y=yx$, i.e., $H_4^{\si_\al}$ is just the opposite algebra $H_4^{\bs{op}}$. Since  $(H_4^{\bs{op}},\De,S^{-1})\cong(H_4,\De,S)$ as Hopf algebras with $a, b$ mapping to $a, b$ respectively, the $2$-cocycle twist via $\si_\al$ is just trivial. This means that such twist inside $H_4$ doesn't produce any new Hopf algebra structure under taking those twists on $D(H_4)$, $\mh$ and $\bar {\ma}$ as before, which is witnessed by the classification work of Caenepeel-Dascalescu-Raianuet on all pointed Hopf algebras of dimension $16$ in \cite{CDR}.
In particular, such Hopf algebras with the Klein group algebra
as the coradicals among the $5$ iso-classes in their classification list are just those $3$ Hopf algebras $D(H_4)$, $\mh$ and $\bar {\ma}$ considered in this paper.

In particular, the Hopf algebras of dimension $16$ with the Klein group algebra
as the coradicals have $5$ iso-classes: $A_{C_2}\ot\bK[\bZ_2],\,\bar{\ma},\,\bar{\ma}^*,\,D(H_4),\,\mh$. The dual algebra $\bar {\ma}^*$ of $\bar {\ma}$ also has generators $g, h, x, y$, subject to the following relations,
\begin{equation}\label{rel}
\begin{array}{l}
gh=hg,\quad g^2=h^2=1,\\
gx=-xg,\quad gy=-yg,\quad hx=xh,\quad hy=-yh,\\
x^2=y^2=0,\quad xy=-yx.
\end{array}
\end{equation}
The comultiplication $\De$ is defined by
\[\De(g)=g\ot g,\quad \De(h)=h\ot h,\quad \De(x)=x\ot 1+g\ot x,\quad \De(y)=y\ot 1+g\ot y.\]
On the other hand, since $D(H_4)$ is not basic, $D(H_4)^*$ is not pointed, thus not equal to any of the five iso-classes above. Note that any object in $\{\bar{\ma},\,D(H_4),\,\mh\}$ is not isomorphic to that in $\{A_{C_2}\ot\bK[\bZ_2],\,\bar{\ma}^*\}$ as coalgebras, thus fails to be twist-equivalent. Meanwhile, $A_{C_2}\ot\bK[\bZ_2],\,\bar{\ma}^*$ are not twist-equivalent, too.
\end{rem}

\begin{rem}
In the paper \cite{Wa}, the author discussed the Green rings of all eight-dimensional Hopf algebras over $\bK$. Note that if the tensor module categories of two Hopf algebras are monoidally equivalent, then their Green rings are isomorphic, but the converse is not always true since the latter is a decategorification of the former in the sense of Mazorchuk (\cite{Maz}). Some interesting phenomena in this direction have been pointed out in Corollary 1.7 and Remark 1.8 of \cite{Wa} as follows.

For those non-semisimple Hopf algebras of dimension $8$, all their Green rings are
 commutative, and isomorphic if and only if the
 module categories are monoidally equivalent. However, for
 the semisimple cases, the Green rings of $\bK[D_8]$ (the dihedral group algebra), $\bK[Q_8]$ (the quaternion group algebra), $A_8$ (the Kac algebra) are mutually isomorphic, but their module categories are not monoidally equivalent (see \cite{TY}, \cite{Wa, Wa1}, etc.).
\end{rem}

\vskip30pt \centerline{\bf ACKNOWLEDGMENT}
The authors are indebted to the referee for his/her valuable comments on the earlier version of manuscript, in which an error on the commutativity of the Green ring $r(\bar {\ma})$ has been pointed out. We fix it in Lemma \ref{sim}. The comments also inspire us to provide a new detailed proof for the description of the Jacobson radicals of the Green rings and correct some computations. The thanks also go to Guodong Zhou
for the useful conversation on relevant results involving the theory of special
biserial algebras and recommending us a nice book \cite{Erd} on it.

\end{document}